\newcommand{\sM}{M}
\newcommand{\sC}{\mathscr{C}}
\newcommand{\Hom}[1]{\mathrm{Hom} ({#1})}
\DeclareMathOperator{\End}{End}
\DeclareMathOperator{\Tr}{Tr}
\DeclareMathOperator{\tr}{tr}
\DeclareMathOperator{\rk}{rk}
\DeclareMathOperator{\res}{res}
\DeclareMathOperator{\codim}{codim}
\newcommand{\N}{\mathbb{N}}
\newcommand{\Z}{\mathbb{Z}}
\newcommand{\R}{\mathbb{R}}
\newcommand{\C}{\mathbb{C}}
\newcommand{\bbL}{\mathbb{L}}
\DeclareMathOperator{\supp}{supp}  
\DeclareMathOperator{\singsupp}{singsupp}
\newcommand{\grad}{\mathrm{grad}}
\newcommand{\one}{\mathbbm{1}}
\DeclareMathOperator{\WF}{WF}
\newcommand{\km}{\mathfrak{m}}
\DeclareMathOperator{\Hol}{Hol} 
\DeclareMathOperator{\cst}{cst} 
\newcommand{\cT}{\mathcal{T}}
\newcommand{\E}{\mathbb{E}}
\newcommand{\sH}{\mathscr{H}}
\newcommand{\halfDen}{ \varOmega^{\nicefrac{1}{2}} }
\DeclareMathOperator{\ret}{ret}
\DeclareMathOperator{\adv}{adv}
\newcommand{\spacetime}{(\sM, \fg)}
\newcommand{\SST}{(\sM, \fg, \varXi)}
\newcommand{\secECauchy}{C^{\infty} (\varSigma; \sEsM_{\varSigma})}
\newcommand{\secsME}{C^{\infty} (\sM; \sEsM)}
\newcommand{\comSecsME}{C_{\mathrm{c}}^{\infty} (\sM; \sEsM)}
\newcommand{\dualComSecsME}{\mathcal{D}^{\prime} (\sM; \sEsM)}
\newcommand{\comSecE}{C_{\mathrm{c}}^{\infty} (M; E)}
\newcommand{\advGreenOp}{G^{\mathrm{adv}}} 
\newcommand{\retGreenOp}{G^{\mathrm{ret}}}
\newcommand{\coTanM}{\mathrm{T}^{*} M}
\newcommand{\coTansM}{\mathrm{T}^{*} \! \sM}
\newcommand{\coTanCauchy}{\mathrm{T}^{*} \! \varSigma}
\newcommand{\dotCoTan}{\dot{\mathrm{T}}^{*}}
\newcommand{\coTan}{\mathrm{T}^{*}}
\newcommand{\tangent}{\mathrm{T}}
\newcommand{\dotCoTanM}{\dot{\mathrm{T}}^{*} \! M}
\newcommand{\dotCoTansM}{\dot{\mathrm{T}}^{*} \! \sM}
\newcommand{\dotCoTanN}{\dot{\mathrm{T}}^{*} N}
\newcommand{\dotCoTanMN}{\dot{\mathrm{T}}^{*} \! (M \times N)}
\newcommand{\dotCoTanCauchy}{\dot{\mathrm{T}}^{*} \! \varSigma}
\newcommand{\coLightBun}{\dot{\mathrm{T}}_{0}^{*} \sM}
\newcommand{\ms}{\scriptscriptstyle}
\newcommand{\pr}{\mathrm{pr}}
\newcommand{\Rn}{\mathbb{R}^{n}} 
\newcommand{\Rd}{\mathbb{R}^{d}}
\newcommand{\xxiNot}{(x_{0}, \xi^{0})}
\newcommand{\PsiDO}[2]{\Psi \mathrm{DO}^{{#1}} ({#2})}
\DeclareMathOperator{\rc}{c}
\DeclareMathOperator{\ri}{i}
\newcommand{\B}{\mathbb{B}}
\newcommand{\re}{\mathrm{e}}
\newcommand{\rd}{\mathrm{d}} 
\newcommand{\bbl}{\mathbbm{l}}
\newcommand{\cF}{\mathcal{F}}
\newcommand{\cH}{\mathcal{H}}
\newcommand{\Maslov}{\mathbb{M}}
\newcommand{\xytheta}{(x, y; \theta)}
\newcommand{\xxi}{(x, \xi)}
\newcommand{\yeta}{(y, \eta)}
\newcommand{\xxiyeta}{(x, \xi; y, \eta)}
\newcommand{\parDeri}[2]{\frac{\partial {#2}}{\partial {#1}}}
\newcommand{\fA}{\mathsf{A}} 
\newcommand{\fa}{\mathsf{a}}
\newcommand{\sE}{E}
\newcommand{\sEsM}{\sE}
\newcommand{\fc}{\mathsf{c}}
\newcommand{\cD}{\mathcal{D}}
\newcommand{\fF}{\mathsf{F}}
\newcommand{\kF}{\mathfrak{F}}
\newcommand{\tE}{\mathtt{E}}
\newcommand{\fh}{h}
\newcommand{\fg}{g}
\newcommand{\fG}{\mathsf{G}}
\newcommand{\fK}{\mathsf{K}}
\newcommand{\fN}{\mathsf{N}}
\newcommand{\fR}{\mathsf{R}}
\newcommand{\cR}{\mathcal{R}} 
\newcommand{\cC}{\mathcal{C}}
\newcommand{\cN}{\mathcal{N}}
\newcommand{\cP}{\mathcal{P}}
\newcommand{\rP}{\mathrm{P}} 
\newcommand{\sR}{\mathscr{R}} 
\newcommand{\T}{\mathbb{T}}
\newcommand{\sgn}{\mathrm{sgn}}
\DeclareMathOperator{\Char}{Char}
\newcommand{\cV}{\mathcal{V}}
\newcommand{\sU}{\mathscr{U}}
\newcommand{\fU}{\mathsf{U}}
\newcommand{\M}{\mathbb{M}}
\newcommand{\m}{\mathbbm{m}}
\newcommand{\bbt}{\mathbbm{t}}
\newcommand{\symb}[1]{\sigma_{\! \ms{#1}}}
\newcommand{\subSymb}[1]{\sigma^{\mathrm{sub}}_{\ms{#1}}}
\newcommand{\dVolh}{\mathrm{d} \mathsf{v}_{\fh}} 
\newcommand{\dVolg}{\mathrm{d} \mathsf{v}_{\fg}}
\newcommand{\dVol}{\mathrm{d} \mathsf{v}} 
\newcommand{\vol}{\mathsf{vol}} 
\newcommand{\nM}{n_{\ms{M}}} 
\newcommand{\nN}{n_{\ms{N}}}
\DeclareMathOperator{\spec}{Spec}
\newcommand{\connectionE}{ \nabla^{\ms \sE} }
\newcommand{\connectionEndPiE}{ \nabla^{\ms \pi^{*} \Hom{\sE, \sE}} }
\newcommand{\scalarProdTwo}[2]{\left\langle {#1} \left| {#2} \right. \right\rangle}
\theoremstyle{plain}
\newtheorem{theorem}{Theorem}[section]
\newtheorem{proposition}[theorem]{Proposition}
\newtheorem{lemma}[theorem]{Lemma}
\newtheorem{corollary}[theorem]{Corollary}
\theoremstyle{definition}
\newtheorem{definition}[theorem]{Definition}
\newtheorem{remark}[theorem]{Remark}
\newtheorem{assumption}[theorem]{Assumption}
\title[Gutzwiller trace formula]{A Gutzwiller Trace formula for Dirac Operators on a Stationary Spacetime}
\author[O.~Islam]{Onirban Islam}
\address{School of Mathematics, University of Leeds, Leeds LS2 9JT, UK}
\address{Institut f\"{u}r Mathematik, Universit\"{a}t Potsdam, 14476 Potsdam, Germany}
\email{onirban.islam@math.uni-potsdam.de}
\date{10 August 2022}
\begin{document}
\begin{abstract}
	A Duistermaat-Guillemin-Gutzwiller trace formula for Dirac-type operators on a globally hyperbolic spatially compact stationary spacetime is achieved by generalising the recent construction by 
	A. Strohmaier and S. Zelditch [Adv. Math. \textbf{376}, 107434 (2021)] 
	to a vector bundle setting. 
	We have analysed the spectrum of the Lie derivative with respect to a global timelike Killing vector field on the solution space of the Dirac equation and found that it consists of discrete real eigenvalues. 
	The distributional trace of the time evolution operator has singularities at the periods of induced Killing flow on the space of lightlike geodesics. 
	This gives rise to the Weyl law asymptotic at the vanishing period.   
\end{abstract}
\maketitle
%
%
%
%
%
%
%
%
%
%
\section{Introduction}
Let $\Delta$ be the Laplacian (in Geometers' convention) on a $d-1$-dimensional closed Riemannian manifold $(\varSigma, \fh)$.  
It is well-known that its spectrum $\spec \Delta$ comprises discrete eigenvalues $\lambda_{n}^{2}$ and the counting function $\fN_{\Delta} (\lambda) := \# \{ n \in \N | \lambda_{n} \leq \lambda \}$ satisfies the celebrated 
Weyl law~\cite{Hoermander_ActaMath_1968} 
\begin{equation} \label{eq: Weyl_law_Laplacian}
	\fN_{\Delta} (\lambda) = \left( \frac{\lambda}{2 \pi} \right)^{d-1} \vol (\B^{d-1}) \, \vol (\varSigma), \quad \textrm{as} \quad \lambda \to \infty, 
\end{equation}
where $\vol (\B^{d-1})$ is the volume of the unit ball $\B^{d-1}$ in $\R^{d-1}$.  
The counting measure $\rd \fN_{\Delta} / \rd \lambda$ is related to $\Tr U_{t} (\sqrt{\Delta})$ via the distributional Fourier transform, where 
\begin{equation} \label{eq: def_U_t_Laplacian}
	U_{t} (\sqrt{\Delta}) := \re^{- \ri t \sqrt{\Delta}}, \quad \forall t \in \R
\end{equation}
makes sense as a distribution on $\R$. 
Thus, the information contained in $U_{t} (\sqrt{\Delta})$ can be proficiently transferred to the spectral function utilising Tauberian arguments to derive the Weyl law.  
Additionally, $U_{t} (\sqrt{\Delta})$ is the solution map to the Cauchy problem of the half-wave operator $- \ri \partial_{t} + \sqrt{\Delta}$ and it satisfies the group property. 
Moreover, the singularity analysis of $\Tr U_{t} (\sqrt{\Delta})$ at $t=0$ gives rise to the half-wave trace invariants and its singular support is contained in the set of periods of periodic geodesics $\gamma$ on $\varSigma$. 
Furthermore, $\Tr U_{t} (\sqrt{\Delta})$ admits a singularity expansion around the non-zero periods $t = T \neq 0$ (under some technical assumptions)~\cite{Chazarain_InventMath_1974, Duistermaat_Inventmath_1975} 
and its leading-order term has been expressed by means of the fundamental periods and eigenvalues of the Poincar\'{e} maps of $\gamma$'s by  
Duistermaat and Guillemin~\cite[Thm. 4.5]{Duistermaat_Inventmath_1975}.  
It is therefore evident that the spectral invariant $\Tr U_{t} (\sqrt{\Delta})$ serves as an efficient generating function to study spectral geometry. 

Sandoval has generalised the Duistermaat-Guillemin trace formula for Dirac-type operators $\hat{D}$ on a hermitian vector bundle $\sE \to \varSigma$ over a closed Riemannian manifold $(\varSigma, \fh)$ by studying 
\begin{equation} \label{eq: def_U_t_Riem_Dirac}
	U_{t} (\hat{D}) := \re^{- \ri t \hat{D}}, \quad \forall t \in \R.  
\end{equation}
Arguably, $\hat{D}$ is the most fundamental first-order differential operator in geometric analysis and it differs from a Laplace-Beltrami operator $\Delta$ on $\sE$ in that its spectrum $\spec \hat{D}$ is unbounded from both below and above whereas the $\spec \Delta$ is bounded from below (semibounded). 
Sandoval's investigation results in a straightforward generalisation of 
the Weyl law~\cite[$(5)$]{Sandoval_CPDE_1999}  
and Dirac-wave trace invariants~\cite[Thm. 2.2]{Sandoval_CPDE_1999}. 
But, genuine bundle features show up in terms of the holonomy group in the singularity analysis of $\Tr U_{T \neq 0} (\hat{D})$~\cite[Thm. 2.8]{Sandoval_CPDE_1999}. 
However, there is an ad-hoc choice regarding the connection used to deduce the holonomy group as elucidated below. 

As a matter of fact, the Weitzenb\"{o}ck connection $\hat{\nabla}$ (see Remark~\ref{rem: Clifford_connection}) determined by $\hat{D}$ does not automatically induce a Clifford connection albeit such a connection always exists and the corresponding operator is usually called a \textit{compatible Dirac-type operators} $\tilde{D}$. 
They differ only by a smooth term $W := \hat{D} - \tilde{D} \in C^{\infty} (\varSigma; \End \sE)$. 
In order to compute the principal symbol $\symb{U_{t} (\hat{D})}$ of $U_{t} (\hat{D})$, Sandoval has used the parallel transporter $\tilde{\cT}$ with respect to the Weitzenb\"{o}ck connection $\tilde{\nabla}$ of $\tilde{D}$ instead of $\hat{\cT}$ corresponding to $\hat{\nabla}$~\cite[Prop. 5.3]{Sandoval_CPDE_1999}.  
Moreover, she employed the trivialisation characterised by the vanishing subprincipal symbol $\subSymb{\tilde{D}^{2}}$ of $\tilde{D}^{2}$~\cite[Cor. 5.11]{Sandoval_CPDE_1999}. 
As a consequence, $\tilde{\cT}$ appearing in the expression of $\symb{U_{t} (\hat{D})}$ is somewhat expedient and the splitting of $\hat{\cT}$ in terms of $\tilde{\cT}$ and the 
average value\footnote{See~\cite[$(16)$]{Sandoval_CPDE_1999}  
	for the precise expression. 
} 
of $W$ along the geodesic $[0, t] \ni s \mapsto \gamma (s) \in \mathbb{S}^{*} \varSigma$ is rather ad-hoc.

The purpose of this article is two-fold. 
Firstly, we have relaxed all the aforementioned ad-hoc considerations to compute $\symb{U_{t} (\hat{D})}$.   
Next, one notices that $U_{t} (\hat{D})$ solves the initial value problem 
\begin{equation} \label{eq: Dirac_wave}
	(- \ri \partial_{t} + \hat{D}) u = 0, \quad u (t_{0}) = k  
\end{equation}
on the $d$-dimensional ultrastatic spacetime $(\R \times \varSigma, \rd t^{2} - \fh)$ (see Section~\ref{sec: GHSST}) admitting the timelike Killing vector field $Z := \partial_{t}$ so that the external parameter $t$ in $U_{t} (\hat{D})$ can be considered as the canonical global time-coordinate on $\R$. 
But, $- \ri \partial_{t} + \hat{D}$ does not respect general relativistic covariance and so it seems more natural to instead work with the Lorentzian Dirac-type operator $D$. 
Thus the article is devoted to investigate a Lorentzian generalisation of Sandoval's trace formula.

This Riemannian to Lorentzian transition is conceptually non-trivial because the solution map to the Cauchy problem of the Dirac equation 
\begin{equation} \label{eq: Dirac_eq}
	D u = 0, \quad u |_{\ms \varSigma} = k  
\end{equation} 
on a globally hyperbolic Lorentzian manifold is no longer of the form~\eqref{eq: def_U_t_Riem_Dirac} as the Dirac-Hamiltonian (the Lorentzian counterpart of $\hat{D}$) becomes time dependent. 
In order to contemplate what should be looked for in a non-ultrastatic setting, we revisit the ultrastatic scenario from a relativistic viewpoint. 
One reckons that the solutions of~\eqref{eq: Dirac_wave} are of the form 
\begin{equation}
	\uppsi_{n}^{\pm} := \re^{- \ri t \lambda_{n}^{\pm}} \upphi_{n}^{\pm}
\end{equation}
where $\upphi_{n}^{\pm}$ are the eigensections of $\hat{D}$ corresponding to the eigenvalues $\lambda_{n}^{\pm}$. 
But 
\begin{equation} \label{eq: eigenvalue_KVF}
	Z \uppsi_{n}^{\pm} = - \ri \lambda_{n}^{\pm} \uppsi_{n}^{\pm}  
\end{equation}
and consequently, 
\begin{equation} \label{eq: trace_time_evolution_op_tr_Killing_flow}
	\Tr_{L^{2} (\varSigma; \sE)} U_{t} (\hat{D}) 
	:= \sum_{\lambda_{n}^{\pm} \in \spec \hat{D}} \re^{- \ri t \lambda_{n}^{\pm}} 
	= \Tr_{\ker (- \ri \partial_{t} + \hat{D})} \re^{tZ}. 
\end{equation}
In other words, the \textit{trace of the Dirac-wave group $U_{t} (\hat{D})$ on a closed Riemannian manifold (Cauchy hypersurface) is equivalent to the trace of the flow $\re^{tZ}$ induced by the timelike Killing vector field $Z$ of an ultrastatic spacetime acting on the kernel $\ker (- \ri \partial_{t} + \hat{D})$ of the half-wave operator $- \ri \partial_{t} + \hat{D}$}. 
This is essentially a straightforward generalisation of the observation originally due to 
Strohmaier and Zelditch~\cite[Sec. 10.1]{Strohmaier_AdvMath_2021} 
in the context of a scalar wave operator.    

It is, thus, evident that a timelike Killing flow $\varXi$ plays a pivotal role and the most general class of spacetimes admitting such a flow is known as the stationary spacetime $\SST$.  Physically speaking, these spacetimes admit a canonical flow of time, but unlike the ultrastatic case, there is no preferred time coordinate. 
They are interesting because a couple of exact solutions to Einstein equation: the Schwarzschild and the Kerr black holes, belong to this class. 
We demand the global hyperbolicity condition on $\spacetime$ to ensure a well-posed Cauchy problem for $D$.  
Therefore, the object of our study is the distributional trace $\Tr U_{t}$ of the \textit{time evolution operator} $U_{t}$ on a compact (to guarantee discrete eigenvalues) Cauchy hypersurface $\varSigma$ without boundary. 
In addition to the conceptual issues, our computational techniques are different from those used by Sandoval, primarily due to the fact that the Dirac Hamiltonian $H_{D}$ (see~\eqref{eq: def_Dirac_Hamiltonian_SSST}) on a stationary spacetime is not of Dirac-type in contrast to $\hat{D}$. 
Furthermore, we have computed $\symb{U_{t}}$ using the Weitzenb\"{o}ck connection induced by $D$ in order to avoid the aforementioned expedient choice in Sandoval's work, as explained in the comment after the statement of Theorem~\ref{thm: trace_formula_t_T}. 

As a quintessential relativistic operator, it furthermore allows studying the semiclassical limit of quantum field theoretic observables interacting with classical 
gravity\footnote{We 
	do not require Einstein's field equations to be satisfied.
}. 
Whilst spin- or spin$^{\mathrm{c}}$-Dirac operators are desirable from a physics point of view; we prefer to work on more general Dirac-type operators because they do not enforce any topological restrictions on  
$M$\footnote{The 
	global existence of a Lorentz metric depends on the topology of $M$. 
	In particular, such a metric exists in all non-compact manifolds and compact manifolds with vanishing Euler characteristics.}
unlike the 
spin\footnote{A 
	spin-structure always exists locally, but its global existence depends on some higher orientability property of the base manifold. 
	For instance, spin$^{\mathrm{c}}$ (resp. spin) structure exists if and only if the third integral (resp. the second) Stiefel–Whitney class of $M$ vanish.}-Dirac 
operators. 
Furthermore, some higher-spin operators are of Dirac-type. 

We would like to close the introduction by mentioning that this is not the only literature on Lorentzian trace formulae rather the first step towards this direction has been taken up for the d'Alembertian on a spatially compact globally hyperbolic stationary spacetime by 
Strohmaier and Zelditch~\cite{Strohmaier_AdvMath_2021}. 
Therefore, this article can be viewed as the Lorentzian generalisation of Sandoval's work,  propounding the framework of Strohmaier-Zelditch into a bundle setting. 
%
%
%
%
%
%
%
%
%
%
\subsection{Primary results} 
\label{sec: result}
Let $\sE \to \sM$ be a vector bundle over a $d \geq 2$ dimensional spatially compact globally hyperbolic stationary spacetime $\SST$. 
That is, the spacetime manifold $\sM$ is homeomorphic to the product manifold $\R \times \varSigma$ where $\varSigma \subset M$ is a spacelike Cauchy hypersurface and $\spacetime$ admits a \textit{complete} timelike Killing vector field $Z$ whose flow is denoted by $\R \times \sM \ni (s, x) \mapsto \varXi (s, x) =: \varXi_{s} (x) \in \sM$.  

Recall, the Lorentzian volume element on $M$ paves way the natural identification between a section and a half-density-valued section of $\sE$. 
We will always work with the latter yet suppress the canonical half-density bundle notationally just for brevity. 
The Dirac-type operator $D$ acting on (half-density-valued-)smooth sections of $\sE$ is a first-order linear differential operator whose principal symbol $\symb{D}$ satisfies the Clifford relation
\begin{equation}
	\symb{D} \xxi^{2} = \fg_{x}^{-1} (\xi, \xi) \, \one_{\End{\sE}_{x}}, \quad \forall \xxi \in C^{\infty} (\sM; \coTansM),  
\end{equation}
where $\fg^{-1}$ represents the spacetime metric on the cotangent bundle $\coTanM$ and $\one_{\End{\sE}}$ is the ($1$-density-valued) identity endomorphism on $\sE$. 

\begin{assumption} \label{asp: trace_formula}
	We consider a Dirac operator subjected to the following assumptions:  
	\begin{enumerate}[label=(\alph*)] 
		\item 
		$\sE \to M$ is endowed with a sesquilinear form $(\cdot|\cdot)$ invariant under the Killing flow $\varXi_{s}^{*}$ such that $D$ is symmetric;  
		\label{asp: Direc_op_symmetric_SST}
		\item 
		Given an arbitrary but fixed future-directed unit normal covector field $(\cdot, \zeta)$ on $\sM$ along $\varSigma$, 
		\begin{equation} \label{eq: hermitian_form_Dirac_type_op}
			\langle \cdot | \cdot \rangle := \, \big( \symb{D} (\cdot, \zeta ) \cdot \big| \cdot \big)   
		\end{equation}
		is a fibrewise hermitian form on the bundle of Clifford modules $\big( \sE \to \sM, (\cdot|\cdot), \symb{D} \big)$; 
		\label{asp: hermitian_form_Dirac_op_SST}
		\item 
		$D$ commutes with the induced Killing flow $\varXi_{s}^{*}$ on $\sE$ for all $s \in \R$.
		\label{asp: commutator_Dirac_op_Killing_flow}
	\end{enumerate} 
\end{assumption}

We note that the Clifford module bundle $\big( \sE \to M, (\cdot|\cdot), \symb{D} \big)$ is naturally furnished with the unique Weitzenb\"{o}ck connection $\nabla$ (see Section~\ref{sec: Dirac_op}) induced by $D$ and $\fg$. 
This connection induces a fibrewise canonical isomorphism $\hat{\varXi}_{s} : \sE_{x} \to (\varXi_{s}^{*} \sE)_{\varXi_{-s} (x)}$ along the integral curve $\varXi_{s} (\cdot)$ of $Z$. 
In other words, the following diagram commutes fibrewise: 

%
%
%
\begin{center}
	\begin{tikzpicture}
		\node (a) at (0, 0) {$\sM$};
		\node (b) at (3, 0) {$\sM$};
		\draw[->] (a) -- (b); 
		\node (c) at (0, 1.5) {$\varXi_{s}^{*} \sE$}; 
		\node (d) at (3, 1.5) {$\sE$}; 
		\draw[->] (c) -- (d); 
		\draw[->] (c) -- (a); 
		\draw[->] (d) -- (b); 
		\node[above] at (1.5, 0) {$\varXi_{s}$}; 
		\node[above] at (1.5, 1.5) {$\hat{\varXi}_{-s}$}; 
	\end{tikzpicture}
	\captionof{figure}{Fibrewise lift $\hat{\varXi}_{-s}$ of spacetime isometry $\varXi_{s}$.}
	\label{fig: bundle_lift_spacetime_isometry}
\end{center}
%
%
%

Let $\pounds_{\! Z}$ be the Lie derivative on $\sE$ (given by~\eqref{eq: def_Lie_deri}) and one sets 
\begin{equation} \label{eq: def_L}
	L := - \ri \pounds_{\! Z} : \secsME \to \secsME
\end{equation}
so that the induced Killing flow on $\secsME$ is expressed as $\varXi_{t}^{*} = \re^{\ri t L}$.  
By Assumption~\ref{asp: trace_formula}~\ref{asp: commutator_Dirac_op_Killing_flow}, 
\begin{equation}
	LD = DL. 
\end{equation}
Thus, on $\ker D$, the eigensections $\uppsi_{n}$ of $L$ are the joint eigensections: 
\begin{equation}
	D \uppsi_{n} = 0, \qquad L \uppsi_{n} = \lambda_{n} \uppsi_{n}. 
\end{equation}
We equip $\ker D$ with the hermitian inner product $\scalarProdTwo{\cdot}{\cdot}$ to have the Hilbert space $(\ker D, \scalarProdTwo{\cdot}{\cdot})$. 
 
%
%
%
\begin{theorem} \label{thm: spectrum_L}
	Under Assumption~\ref{asp: trace_formula}, the spectrum of $L$~\eqref{eq: def_L} on the Hilbert space $\sH := (\ker D, \langle \cdot | \cdot \rangle)$ is purely discrete and comprises infinitely many real eigenvalues that grow polynomially and accumulate at $\pm \infty$. 
\end{theorem}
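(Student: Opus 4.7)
The plan is to reduce the spectral problem on the infinite-dimensional solution space $\sH$ to one for a first-order elliptic self-adjoint operator on the closed compact Cauchy hypersurface $\varSigma$, via the Cauchy data map, and then to read off all claims from the Weyl law. First I would invoke global hyperbolicity and the Dirac-type nature of $D$ to obtain well-posedness of the Cauchy problem $Du=0$, $u|_{\varSigma}=k$, so that the restriction $\rho_{\varSigma}\colon \ker D \to \secECauchy$ is a linear bijection. Assumptions~\ref{asp: trace_formula}~\ref{asp: Direc_op_symmetric_SST}--\ref{asp: hermitian_form_Dirac_op_SST} together with Green's identity then yield a divergence-free current built from~\eqref{eq: hermitian_form_Dirac_type_op}, so $\langle\cdot|\cdot\rangle$ is independent of the Cauchy hypersurface and $\rho_{\varSigma}$ extends to a unitary isomorphism $\sH \to \LTwoECauchy$.

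Next I would transport $L$ through this isomorphism. By Assumption~\ref{asp: trace_formula}~\ref{asp: commutator_Dirac_op_Killing_flow}, $\varXi_{s}^{*}$ preserves $\ker D$; Killing-invariance of $(\cdot|\cdot)$ combined with the Cauchy-surface independence of $\langle\cdot|\cdot\rangle$ makes the flow unitary on $\sH$, while strong continuity is clear from the smoothness of the Killing flow on $\spacetime$. Stone's theorem therefore furnishes a self-adjoint generator of $(\varXi_{s}^{*})_{s\in\R}$, which is precisely $L$; hence $\spec L \subset \R$. Conjugating by $\rho_{\varSigma}$ yields a one-parameter unitary group on $\LTwoECauchy$ whose generator $\tilde{L} := \rho_{\varSigma} L \rho_{\varSigma}^{-1}$ is an explicit first-order differential operator on $\varSigma$: writing $\pounds_{Z} = \nabla_{Z} + A$ in the Weitzenb\"{o}ck connection of $D$, decomposing $Z|_{\varSigma} = N\nu + V$ with $N>0$ (since $Z$ is timelike and $\nu$ the unit future-directed normal) and $V \in \tangent \varSigma$, and using $D\psi = 0$ to eliminate the normal derivative $\nabla_{\nu}\psi$ in favour of tangential operators---this is the Dirac-Hamiltonian construction underlying~\eqref{eq: def_Dirac_Hamiltonian_SSST}.

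The crux, and what I expect to be the main obstacle, is establishing ellipticity of $\tilde{L}$. Its principal symbol at $\xi \in \dotCoTanCauchy$ involves $N(x)\,\symb{D}(\zeta)^{-1}\symb{D}(\xi^{\flat})$ together with a scalar shift proportional to $\xi(V)\one$; the Clifford relation applied to the unit-timelike $\zeta$ and the spacelike $\xi$ forces the square of the first term to be a non-vanishing scalar multiple of $\one$, so that term is invertible, has eigenvalues of both signs, and dominates the shift for $|\xi|$ large. Granted this, $\tilde{L}$ is a first-order self-adjoint elliptic operator on the closed manifold $\varSigma$, so standard spectral theory delivers the purely discrete real spectrum with polynomial Weyl-law growth $|\lambda_{n}| \sim c\,n^{\nicefrac{1}{(d-1)}}$, and the sign indefiniteness of the principal symbol forces accumulation at both $\pm\infty$. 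The delicate point in the ellipticity argument is that, unlike in Sandoval's Riemannian setting, $\tilde{L}$ is not itself of Dirac-type on $\varSigma$: it arises from resolving the constraint $D\psi = 0$ and therefore entangles $\symb{D}(\zeta)^{-1}$ with $\symb{D}(\xi^{\flat})$, so one must rely critically on $Z$ being strictly timelike everywhere (hence $N>0$) and on careful bookkeeping of the zeroth-order contributions from the Weitzenb\"{o}ck connection and from the endomorphism $A$ of $\pounds_{Z}$ to be sure they do not disrupt the principal-symbol computation.
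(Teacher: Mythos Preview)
Your approach is correct but takes a genuinely different route from the paper. The paper does not transport $L$ to the Cauchy hypersurface and invoke ellipticity; instead it first establishes (Theorem~\ref{thm: trace_Cauchy_evolution_op}) that the smoothed evolution $U_{\rho} = \int_{\R} \re^{\ri t L} (\cF^{-1}\rho)(t)\,\rd t$ is trace-class for every Schwartz $\rho$ with compactly supported inverse Fourier transform, using the Fourier integral operator description of the causal propagator, and then invokes the abstract functional-analytic principle recalled at the start of Section~\ref{sec: spectral_theory_L} to conclude discreteness and polynomial growth. Self-adjointness of $L$ on $\sH$ (Stone's theorem, as you also note) gives reality of the eigenvalues.

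Your reduction to a first-order elliptic self-adjoint operator on the closed manifold $\varSigma$ is more elementary and self-contained for this particular statement: it avoids the Lagrangian-distribution machinery entirely and gives the Weyl asymptotic directly from classical elliptic theory. The ellipticity argument you sketch is the right one, and in fact the paper acknowledges (in passing after~\eqref{eq: def_Dirac_Hamiltonian_SSST}) that the Dirac Hamiltonian on a stationary spacetime is first-order elliptic though not of Dirac type. The key inequality $|V|_{\fh} < N$ needed for invertibility of the principal symbol $N\,\symb{D}(\zeta)\symb{D}(\xi) + \xi(V)\one$ follows exactly from $\fg(Z,Z) = N^{2} - |V|_{\fh}^{2} > 0$, so your emphasis on the strict timelikeness of $Z$ is well placed. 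The paper's route, by contrast, is economical in context: since the trace-class property of $U_{\rho}$ is indispensable for the trace formula anyway, the spectral theorem for $L$ falls out with no extra work.
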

%
%
%

Thus, we can restrict our attention entirely to smooth sections $C^{\infty} (\sM; \sE)$ of $\sE$ owing to the elliptic regularity and $\tr_{\sH} \re^{\ri t L}$ makes sense as a distribution with the identification; cf.~\eqref{eq: trace_time_evolution_op_tr_Killing_flow} 
\begin{equation} \label{eq: Tr_Cauchy_evolution_op_Dirac_Tr_Killing_flow}
	\Tr U_{t} = \tr_{\sH} \varXi_{t}^{*},   
\end{equation}
where $U_{t}$ is the time evolution operator of the Dirac equation~\eqref{eq: Dirac_eq}. 

Since the classical dynamics is governed by the principal symbol $\symb{D}$ of $D$, its characteristic set $\Char D := \{ \xxiNot \in \coTansM | \xi^{0} \neq 0, \symb{D} \xxiNot = 0 \}$ can be viewed as a classical limit of $\ker D$ as per se geometric quantisation. 
Thus our classical phase space is the lightcone bundle $\coLightBun \to \sM$ over $\SST$ where the Hamiltonian 
\begin{equation} \label{eq: def_Hamiltonian_metric}
	H_{\fg} : C^{\infty}(\sM; \coTansM) \to \R, ~ \xxi \mapsto H_{\fg} \xxi := \frac{1}{2} \fg_{x}^{-1} (\xi, \xi) 
\end{equation}
induced by $\fg^{-1}$ vanishes and the reduced phase space is the conic symplectic manifold $\cN$ of scaled-lightlike geodesic strips~\cite{Penrose_1972} 
(see also~\cite{Khesin_AdvMath_2009}). 
Being a globally hyperbolic spacetime, $\sM$ does not admit any closed timelike geodesic. 
Therefore, the notion of Lorentzian analogue of periodic trajectories is defined by means of the isometry $\varXi_{t}$ induced (reduced) symplectic flow $\varXi_{t}^{\cN}$ on $\cN$, whose Hamiltonian is given by~\cite[Lem. 1.1]{Strohmaier_AdvMath_2021} 
\begin{equation} \label{eq: def_Hamiltonian_scaled_lightlike_geodesic}
	H : \cN \to \R, ~\gamma \mapsto H (\gamma) := \fg \Big( \frac{\rd c}{\rd s}, Z \Big), 
	\quad \forall s \in \R, 
\end{equation} 
where $\R \ni s \mapsto c (s) \in M$ is any lightlike geodesic on $\sM$ and the value $\fg (\nicefrac{\rd c}{\rd s}, Z)$ is independent of the cotangent lift $\gamma$ of $c$. 
This Hamiltonian is positive as $Z$ is timelike. 
For any $\tE \in \R_{+}$, we denote the constant $\tE$-energy surface by  
\begin{equation} \label{eq: def_cst_energy_surface_Hamiltonian_scaled_lightlike_geodesic}
	\cN_{\tE} := \{ \gamma \in \cN \,|\, H (\gamma) = \tE \},  
\end{equation}
and then the set of periods resp. periodic lightlike geodesic strips of $\varXi_{s}^{\cN}$ are given by 
(see e.g.~\cite[(6)]{Strohmaier_AdvMath_2021}) 
\begin{subequations}
	\begin{eqnarray}
		\cP 
		& := & 
		\{ T \in \R_{+} \,|\, \exists \gamma \in \cN : \varXi_{T}^{\cN} (\gamma) = \gamma \}, 
		\label{eq: def_period_Killing_flow}
		\\  
		\cP_{T} 
		& := & 
		\{ \gamma \in \cN \,|\, \varXi_{T}^{\cN} (\gamma) = \gamma \}. 
		\label{eq: def_periodic_pt_Killing_flow}
	\end{eqnarray}
\end{subequations}
Recall, the set of all lengths of periodic geodesics on a manifold counted with multiplicities is called the length spectrum of the manifold. 
Our first finding states that $\Tr U_{t}$ determines the Lorentzian length spectrum of $\cN$ in the sense described below. 

%
%
%
\begin{proposition} \label{prop: length_spectrum}
	Let $\sE \to \sM$ be a vector bundle over a spatially compact globally hyperbolic stationary spacetime $\SST$, endowed with a sesquilinear form $(\cdot|\cdot)$ and $D$ a Dirac-type operator on $\sE$ whose principal symbol is $\symb{D}$ so that $\big( \sE \to \sM, (\cdot|\cdot), \symb{D} \big)$ is a bundle of Clifford modules over $\SST$.   
	If $U_{t}$ is the time evolution operator of $D$ then under Assumption~\ref{asp: trace_formula}, $\Tr U_{t}$ is a distribution on $\R$ and its singular support 
	\begin{equation}
		\singsupp{\Tr U_{t}} \subset \{0\} \cup \cP,  
	\end{equation}
	where $\cP$ is the set of periods of the induced Killing flow $\varXi_{T}^{\cN}$ on the manifold of scaled-lightlike geodesic strips $\cN$, given by~\eqref{eq: def_period_Killing_flow}.   
\end{proposition}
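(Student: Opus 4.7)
The plan is to realise $\Tr U_{t}$ as the pullback to the diagonal of the Schwartz kernel of the operator $\Pi \circ \varXi_{t}^{*}$, where $\Pi$ denotes the orthogonal projector onto the solution space $\sH = (\ker D, \scalarProdTwo{\cdot}{\cdot})$, and then to locate the wavefront set of the resulting distribution on $\R$ by propagation-of-singularities arguments. By Theorem~\ref{thm: spectrum_L} and the identification~\eqref{eq: Tr_Cauchy_evolution_op_Dirac_Tr_Killing_flow}, the distributional trace $\Tr U_{t} = \tr_{\sH} \varXi_{t}^{*}$ is well-defined, and its singularities are controlled by those of the kernel $(x, y) \mapsto \Pi (\varXi_{t}(x), y)$ restricted to $x = y$.

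First, I would recall the microlocal structure of the Cauchy problem for $D$. The advanced-minus-retarded fundamental solution $G = \advGreenOp - \retGreenOp$ has its twisted wavefront set on the bicharacteristic flow of the Hamiltonian $H_{\fg}$ from~\eqref{eq: def_Hamiltonian_metric} restricted to $\Char D$, and the projector $\Pi$ onto $\ker D$ is manufactured out of $G$ together with the Cauchy data operators on $\varSigma$ in the standard fashion. Hence the canonical relation of $\Pi$ is the graph of the null-geodesic strip relation on $\dotCoTansM$, concentrated in $\coLightBun$. Composing with $\varXi_{t}^{*}$, a bundle map covering the Killing flow, yields an operator whose canonical relation comprises pairs $(\xxi, \yeta) \in \coLightBun \times \coLightBun$ for which $\varXi_{t}^{*} \xxi$ and $\yeta$ lie on a common null-geodesic strip.

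Next, I would apply the standard wavefront-set calculus for distributional traces: denoting by $K_{t}$ the Schwartz kernel of $\Pi \circ \varXi_{t}^{*}$, the wavefront set of $t \mapsto \int \tr_{\sE_{x}} K_{t}(x, x) \, \dVolg$ is controlled by those covectors $(t, \tau)$ for which there exists $\xxi \in \coLightBun$ such that $\varXi_{t}^{*} \xxi$ and $\xxi$ lie on the same null-geodesic strip. Unwinding definitions, this is exactly the condition that the scaled-lightlike geodesic strip through $\xxi$ be a fixed point of the induced symplectic flow $\varXi_{t}^{\cN}$ on $\cN$; the parameter $\tau$ then coincides with the Hamiltonian $H(\gamma)$ of~\eqref{eq: def_Hamiltonian_scaled_lightlike_geodesic}, which is strictly positive since $Z$ is timelike. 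Consequently, the singular support of $\Tr U_{t}$, being the projection of its wavefront set to $\R$, is contained in $\{ t \in \R \,|\, \cP_{t} \neq \emptyset \} = \{0\} \cup \cP$.

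The main technical obstacle is to verify rigorously that $\Pi \circ \varXi_{t}^{*}$ falls within a Fourier integral operator framework with clean composition, so that its restriction to the diagonal is a legitimate distribution whose wavefront set may be read off from the transverse-intersection analysis above. The contribution at $t = 0$, where every null covector is trivially $\varXi_{0}^{\cN}$-invariant, is precisely what forces $\{0\}$ to appear in the exceptional set; global hyperbolicity of $\spacetime$ excludes closed timelike or null geodesics, so all remaining singular times must correspond to genuine periods of $\varXi_{t}^{\cN}$ on $\cN$, as claimed.
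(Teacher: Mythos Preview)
Your overall strategy---FIO structure of the solution projector, composition with the Killing flow, and identification of the diagonal--restriction wavefront set with the fixed points of $\varXi_{t}^{\cN}$---is exactly what the paper does, and the periodic-orbit condition you isolate is correct. But there is a genuine gap in how you propose to compute the trace. You write $\int \tr_{\sE_{x}} K_{t}(x,x)\,\dVolg$ with $K_{t}$ the kernel of $\Pi\circ\varXi_{t}^{*}$ on $\sM\times\sM$; this is an integral over the non-compact spacetime and has no reason to converge, nor is there an obvious way to regularise it without first reducing to a compact slice. The abstract projector $\Pi$ onto $\ker D$ is not a map between $L^{2}$ spaces on $\sM$, so its Schwartz kernel cannot be traced by diagonal integration over $\sM$.

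The paper resolves this by transporting everything to the compact Cauchy hypersurface $\varSigma$: one has $U_{t}=\cR\circ\varXi_{-t}^{*}\circ\cR^{-1}$ with $\cR^{-1}=-\ri\,G\circ(\iota_{\ms\varSigma}^{*})^{-1}\symb{D}(\cdot,\zeta)$, so that the trace is the $\varSigma$-integral of the diagonal of $\fG_{t}:=(\iota_{x}^{*}\boxtimes\iota_{y}^{*})\big((\varXi_{-t}^{*}\fG)\,\symb{D}(\cdot,\zeta)\big)$. The canonical relation $\cC_{t}$ of $\fG_{t}$ is computed in Lemma~\ref{lem: restriction_pullback_causal_propagator}, and composing with $\uppi_{*}\circ\varDelta^{*}$ yields the Lagrangian $\Lambda_{T}$ of~\eqref{eq: def_canonical_relation_Tr_U_t}, whose base projection is $\{0\}\cup\cP$. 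Once you replace your abstract $\Pi$ by this concrete Cauchy-surface realisation, your wavefront argument goes through and coincides with the paper's. (A minor slip: $\tau=-H(\gamma)<0$, not $+H(\gamma)$; this does not affect the singular-support conclusion.)
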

%
%
%

Lagrangian distributions~\cite{Hoermander_ActaMath_1971, Duistermaat_ActaMath_1972} 
(see also the treatise~\cite{  Hoermander_Springer_2009})  
offer the elegant characterisation of $\Tr U_{t}$, as stated below.  

%
%
%
\begin{theorem} \label{thm: trace_formula_t_zero}
	As in the set-up of Proposition~\ref{prop: length_spectrum}, $\Tr U_{T}$ is the Lagrangian distribution $I^{d - 7/4} (\R, \Lambda_{T})$ where $d := \dim \sM$ and the Lagrangian manifold $\Lambda_{T}$ is given by  
	\begin{equation} \label{eq: def_canonical_relation_Tr_U_t}
		\Lambda_{T} := \{ (T, \tau) \in \R \times \R_{-} \,|\, \exists \gamma \in \cN : \varXi_{T}^{\cN} (\gamma) = \gamma, \tau = - H (\gamma) \},   
	\end{equation}
	where the Hamiltonian $H$ on $\cN$ is given by~\eqref{eq: def_Hamiltonian_scaled_lightlike_geodesic}. 
	Furthermore,    
	\begin{equation}
		(\Tr U_{0}) (t) = u_{0} (t) + v_{0} (t), 
	\end{equation}
	where $v_{0}$ is a distribution on $\R$ which is smooth in the vicinity of $t=0$ and $u_{0}$ is a Lagrangian distribution admitting the following singularity expansion around $t=0$:   
	\begin{eqnarray}
		u_{0} (t) 
		& \sim & 
		r (d-1) \frac{\vol (\cN_{H \leq 1})}{(2 \pi)^{d-1}} \mu_{d-1} (t) + \mathrm{c}_{2} \, \mu_{d-2} (t) + \ldots, 
		\nonumber \\ 
		r 
		& := & 
		\rk{\sE} \int_{\varSigma} \fg_{x}^{-1} (\eta, \zeta) \, \dVolh (x), 
		\nonumber \\ 
		\mu_{d-k} (t) 
		& := & 
		\int_{\R_{\geq 0}} \re^{- \ri t \tau} \tau^{d-1-k} \rd \tau, \quad k = 1, 2, \ldots.   
	\end{eqnarray}
	Here, $\mu_{d-k}$ is a distribution on $\R$ given by the preceding oscillatory integral, $\eta$ is any lightlike covector on $\sM$ and $\zeta$ as in Assumption~\ref{asp: trace_formula}, both restricted to a Cauchy hypersurface $\varSigma$ of $\sM$, $\mathrm{c}_{2}$ is some constant (Dirac-wave trace invariant), and $\vol (\cN_{H \leq 1})$ is the volume of $\cN_{H \leq 1}$. 
\end{theorem}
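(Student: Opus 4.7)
The plan is to compute $\Tr U_{t}$ via the Fourier-integral-operator (FIO) realisation of the time evolution operator, combined with the identification $\Tr U_{t} = \tr_{\sH} \varXi_{t}^{*}$ from \eqref{eq: Tr_Cauchy_evolution_op_Dirac_Tr_Killing_flow} and the bicharacteristic analysis already invoked to prove Proposition \ref{prop: length_spectrum}. The overall scheme follows the Duistermaat-Guillemin paradigm as extended to the Lorentzian setting by Strohmaier-Zelditch, now adapted to the bundle of Clifford modules $\big( \sE \to \sM, (\cdot|\cdot), \symb{D} \big)$ equipped with its canonical Weitzenb\"{o}ck connection $\nabla$.

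First I would realise $U_{t}$ as an FIO on $\LTwoECauchy$ whose canonical relation is the graph of the reduced Killing-adjusted null-bicharacteristic flow on $\coLightBun$. The underlying bundle-valued principal symbol is obtained by transporting the identity along null geodesics with the Weitzenb\"{o}ck parallel transporter $\hat{\varXi}_{s}$ of Figure \ref{fig: bundle_lift_spacetime_isometry}, rather than the Clifford-connection transporter used in \cite{Sandoval_CPDE_1999}. Taking the trace corresponds to pulling the kernel back by the diagonal and pushing forward over $\varSigma$. By the standard FIO trace formula, the result is a Lagrangian distribution on $\R_{t}$ whose Lagrangian is the image, under the energy-momentum map, of the fixed-point set of $\varXi_{T}^{\cN}$ — concretely the manifold $\Lambda_{T}$ of \eqref{eq: def_canonical_relation_Tr_U_t}. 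Its Lagrangian property in $\dotCoTan \R$ follows from the fact that the reduction Hamiltonian $H$ is constant along the orbits of $\varXi_{t}^{\cN}$ (a consequence of $Z$ being Killing). The order $d - 7/4$ is obtained by standard dimension bookkeeping: one starts from the natural FIO order on the $d$-dimensional spacetime, performs symplectic reduction from $\dotCoTansM$ of dimension $2d$ to $\cN$ of dimension $2d-4$ (reducing by the constraint $H_{\fg}=0$ and the scaling action), then applies the trace (fibre-integration of codimension $d-1$); each step shifts the order by a halved-codimension contribution, in the conventions of \cite{Hoermander_Springer_2009}.

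For the singularity expansion at $t = 0$, the idea is that $\{0\}$ is isolated in $\singsupp \Tr U_{t} \subset \{0\} \cup \cP$ by Proposition \ref{prop: length_spectrum}, so one may multiply by a cutoff supported near zero and analyse the resulting distribution by stationary phase. Near $t=0$, the FIO kernel of $U_{t}$ admits a Hadamard-type parametrix whose symbol satisfies a transport equation along null geodesics governed by $\nabla$ and the subprincipal symbol of $D^{2}$; inserting this into the trace and passing to polar coordinates along the lightcone fibres yields an oscillatory integral whose leading contribution is
\begin{equation*}
	\int_{0}^{\infty} \re^{-\ri t \tau} \tau^{d-2} \Big( \int_{\cN_{H \leq 1}} \tr_{\End \sE} \one \, \rd \vol_{\cN} \Big) \rd \tau + (\textrm{lower order}),
\end{equation*}
modulo a smooth remainder. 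Evaluating the fibre-trace using Assumption \ref{asp: trace_formula}\ref{asp: hermitian_form_Dirac_op_SST} and reducing the integral over $\cN$ to one over a Cauchy slice via the Killing-flow action produces the stated constant $r = \rk{\sE} \int_{\varSigma} \fg_{x}^{-1} (\eta, \zeta) \dVolh (x)$, the geometric factor $(2\pi)^{-(d-1)} \vol(\cN_{H \leq 1})$, and the distribution $\mu_{d-1}(t)$. The subleading coefficients $\mathrm{c}_{k}$ arise from the subprincipal symbol iterates and give the $\mu_{d-k}$ series.

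The principal obstacle I expect is the bundle-valued symbol calculation: computing $\symb{U_{t}}$ intrinsically in terms of the Weitzenb\"{o}ck connection $\nabla$ induced directly by $D$ — rather than passing to a compatible Dirac-type operator $\tilde{D}$ as in \cite{Sandoval_CPDE_1999} — requires a careful transport analysis along null bicharacteristics in which $\subSymb{D^{2}}$ contributions are carried by $\nabla$ alone. A secondary difficulty is tracking the half-density normalisations through the two-stage reduction (Clifford constraint, then Killing reduction) consistently enough to land on the precise order $d - 7/4$ and the precise form of the coefficient $r$, which appears somewhat unusual compared to the Riemannian analogue because $\fg^{-1}(\eta,\zeta)$ rather than a unit-sphere factor emerges from the Killing-Hamiltonian weight on $\cN$.
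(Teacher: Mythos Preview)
Your overall strategy --- realise $U_{t}$ as a Fourier integral operator, compute the trace as $\uppi_{*}\circ\tr\circ\varDelta^{*}$ applied to its kernel, and read off the Lagrangian $\Lambda_{T}$ and the leading symbol from a clean-intersection computation --- is exactly the Duistermaat--Guillemin scheme the paper follows. The paper's concrete route is to write $U_{t}=\cR\circ\varXi_{-t}^{*}\circ\cR^{-1}$ and express the kernel as $\fU_{t}(x,y)=\fG_{t}(x,y)\,\symb{D}(y,\zeta)$, where $\fG_{t}$ is the restriction to $\varSigma\times\varSigma$ of the Killing-pulled-back causal propagator (Lemmas~\ref{lem: causal_propagator_Dirac}--\ref{lem: restriction_pullback_causal_propagator}). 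You gesture at this via ``Hadamard-type parametrix'', but the specific mechanism --- the causal propagator $G$ of $D$ as an FIO of order $-1/2$ with symbol $\tfrac{\ri}{2}\sqrt{2\pi}\,\symb{D}\circ w$ --- is what makes the bundle-valued symbol computation tractable and is where the Weitzenb\"{o}ck transport enters (through $w$ solving the $D^{2}$-compatible transport equation).

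There are two genuine errors in your sketch. First, the leading integrand is \emph{not} $\tr_{\End\sE}\one$. Because $\cR^{-1}=-\ri\,G\circ(\iota_{\varSigma}^{*})^{-1}\symb{D}(\cdot,\zeta)$, the kernel of $U_{t}$ carries an extra factor $\symb{D}(y,\zeta)$; on the diagonal at $t=0$ (where $w=\one$) the endomorphism trace is
\[
\tr\big(\symb{D}(y,\eta)\,\symb{D}(y,\zeta)\big)=\fg_{y}^{-1}(\eta,\zeta)\,\rk\sE
\]
by the Clifford relation \eqref{eq: anticommutation_symbol_Dirac_type}. This is precisely where the factor $\fg^{-1}(\eta,\zeta)$ in $r$ comes from --- it is not produced by ``reducing the integral over $\cN$ to a Cauchy slice''. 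Second, your dimension bookkeeping is off: $\dim\cN=2d-2$, not $2d-4$, and the order $d-7/4$ does not arise from a two-stage reduction count. In the paper it comes from $\fU_{t}\in I^{-1/4}$, $\uppi_{*}\circ\varDelta^{*}$ of order $0$, and the excess $e=\dim\kF_{0}=2d-3$ of the clean composition $C'_{\uppi_{*}\varDelta^{*}}\circ\cC_{0}$ at $T=0$, giving $-1/4+e/2=d-7/4$. The homogeneity/volume factor $(d-1)\vol(\cN_{H\leq 1})$ then enters via Guillemin's symplectic residue $\res H^{1-d}$ (equation \eqref{eq: symplectic_residue_volume_lightlike_geodesic}), which you do not invoke.
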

%
%
%

We remark that $\SST$ can be considered as a (globally hyperbolic) standard stationary spacetime as $Z$ is complete~\cite[Thm. 2.3]{Candela_AdvMath_2008}. 
Since there exists a coordinate neighbourhood $\big( U, (t, x^{i}) \big)$ for any spacetime point on which $Z = \partial_{t}$, we can choose $t$ as the Killing-time coordinate so that the spacetime metric does not depend on $t$. 
More precisely, on $U$ the spacetime metric splits non-uniquely as 
\begin{equation}\label{eq: def_spacetime_metric_SSST}
	\fg := \upbeta^{2} \, \rd t^{2} - \fh_{ij} (\rd x^{i} + \upalpha^{i} \, \rd t) (\rd x^{j} + \upalpha^{j} \, \rd t), 
	\quad i,j = 2, \ldots, d,     
\end{equation}
where $\fh$ is a Riemannian metric on the (compact) Cauchy hypersurface $\varSigma$, $\upalpha$ is the shift vector field and $\upbeta$ is the lapse function. 
Both $\upalpha$ and $\upbeta$ are independent of $t$ but depend on $\varSigma$. 
In the preceding form of the spacetime metric, the volume of $\cN_{H \leq 1}$ is given by~\cite[$(15)$]{Strohmaier_AdvMath_2021}
\begin{equation} \label{eq: vol_energy_surface}
	\vol (\cN_{H \leq 1}) = \vol (\mathbb{B}^{d-1}) \int_{\varSigma} \upbeta (x) \big( \upbeta^{2} (x) - \fh_{x} (\upalpha, \upalpha) \big)^{-d/2} \dVolh (x),  
\end{equation}
where $\vol (\mathbb{B}^{d-1})$ is the volume of the unit ball $\mathbb{B}^{d-1} \subset \R^{d-1}$. 

Since $L$ has discrete eigenvalues, let us introduce its eigenvalue counting  
function\footnote{One   
	can define it with negative eigenvalues as well.
} 
\begin{equation}
	\mathsf{N} (\lambda) := \# \{ n \in \N \,|\, 0 \leq \lambda_{n} \leq \lambda \}. 
\end{equation}
%
%
%
\begin{corollary}[Weyl law] \label{cor: Weyl_law}
	As in the terminologies of Theorem~\ref{thm: spectrum_L} and Theorem~\ref{thm: trace_formula_t_zero}, the Weyl eigenvalue counting function of $L$ has the asymptotics 
	\begin{equation}
		\fN (\lambda) = \Big( \frac{\lambda}{2 \pi} \Big)^{d-1} r \, \vol (\cN_{H \leq 1}) + O (\lambda^{d-2}), 
		\quad \textrm{as} \quad \lambda \to \infty. 
	\end{equation}
\end{corollary}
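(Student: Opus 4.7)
The strategy is the classical Fourier--Tauberian argument of H\"{o}rmander: Theorem~\ref{thm: trace_formula_t_zero} supplies the singularity input at $t = 0$, and Proposition~\ref{prop: length_spectrum} guarantees that this singularity is isolated from the rest of $\cP$. By~\eqref{eq: Tr_Cauchy_evolution_op_Dirac_Tr_Killing_flow} together with the spectral decomposition of $L$ on $\sH$ (Theorem~\ref{thm: spectrum_L}), $\Tr U_{t}$ is, up to the paper's sign convention, the distributional Fourier transform of the eigenvalue counting measure $\mathrm{d} \fN := \sum_{n} \delta_{\lambda_{n}}$ of $L$.

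I would first choose an even Schwartz function $\rho \in \mathcal{S}(\R)$ with $\hat{\rho} \in C_{\mathrm{c}}^{\infty}(\R)$, $\hat{\rho}(0) = 1$, $\rho \geq 0$, $\int_{\R} \rho \, \mathrm{d} \lambda = 1$, and $\supp \hat{\rho} \subset (-T_{0}, T_{0})$ with $T_{0} > 0$ strictly smaller than the least positive element of $\cP$; such $T_{0}$ exists because $\cP$ does not accumulate at the origin. Then
\begin{equation*}
(\rho \ast \mathrm{d} \fN)(\lambda) \;=\; \sum_{n} \rho(\lambda - \lambda_{n}) \;=\; \frac{1}{2 \pi} \int_{\R} \hat{\rho}(t) \, \re^{\ri t \lambda} \, (\Tr U_{t}) \, \mathrm{d} t .
\end{equation*}
By Proposition~\ref{prop: length_spectrum}, $\hat{\rho}(t) \Tr U_{t}$ agrees with $\hat{\rho}(t) u_{0}(t)$ modulo a compactly supported smooth function, where $u_{0}$ is the Lagrangian distribution of Theorem~\ref{thm: trace_formula_t_zero}. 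Substituting its singularity expansion and invoking the inverse Fourier identification of each $\mu_{d - k}(t)$ with $\tau^{d - 1 - k} \one_{[0, \infty)}(\tau)$ (up to a factor of $2 \pi$), the convolution $\tau^{d - 1 - k} \one_{[0, \infty)} \ast \rho$ is asymptotic to $\lambda^{d - 1 - k}$ as $\lambda \to \infty$. Combined with the bound $\sum_{\lambda_{n} < 0} \rho(\lambda - \lambda_{n}) = O(\lambda^{-\infty})$ --- which follows from the Schwartz decay of $\rho$ against the polynomial growth of $|\lambda_{n}|$ from Theorem~\ref{thm: spectrum_L} --- this yields
\begin{equation*}
(\rho \ast \mathrm{d} \fN)(\lambda) \;=\; r (d - 1) \frac{\vol (\cN_{H \leq 1})}{(2 \pi)^{d - 1}} \, \lambda^{d - 2} \;+\; O(\lambda^{d - 3}), \quad \lambda \to \infty .
\end{equation*}

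The final step is the monotone Fourier--Tauberian theorem of H\"{o}rmander~\cite{Hoermander_ActaMath_1968}: since $\fN$ is monotone non-decreasing and of polynomial growth (Theorem~\ref{thm: spectrum_L}), the smoothed asymptotic above upgrades, with the usual loss of one power in the remainder, to
\begin{equation*}
\fN(\lambda) \;=\; \int_{0}^{\lambda} r (d - 1) \frac{\vol (\cN_{H \leq 1})}{(2 \pi)^{d - 1}} \, \mu^{d - 2} \, \mathrm{d} \mu \;+\; O(\lambda^{d - 2}) \;=\; \Big( \frac{\lambda}{2 \pi} \Big)^{d - 1} r \, \vol (\cN_{H \leq 1}) \;+\; O(\lambda^{d - 2}),
\end{equation*}
as claimed. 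The main technical subtlety is the clean separation of positive and negative frequencies, since $\spec L$ accumulates at both $\pm \infty$ whereas $\fN$ counts only non-negative eigenvalues; this is taken care of automatically by the one-sided representation $\mu_{d - k}(t) = \int_{\R_{\geq 0}} \re^{- \ri t \tau} \tau^{d - 1 - k} \, \mathrm{d} \tau$ appearing in Theorem~\ref{thm: trace_formula_t_zero}, which by construction isolates the high-frequency asymptotic along the relevant half-line of the spectrum.
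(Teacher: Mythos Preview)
Your argument is correct and follows essentially the same route as the paper: cut off $\Tr U_{t}$ near $t=0$ with a Schwartz function whose Fourier transform is compactly supported away from the nonzero periods, read off the leading asymptotic of the smoothed counting measure from the $t=0$ singularity expansion in Theorem~\ref{thm: trace_formula_t_zero}, and then apply a Fourier--Tauberian step. The only cosmetic difference is that the paper carries out the Tauberian step by hand (integrating $\chi * \rd \fN$ and bounding $\fN(\lambda-\mu)-\fN(\lambda)$), whereas you package it as an invocation of H\"{o}rmander's monotone Tauberian theorem; your added remark on the one-sidedness of the $\mu_{d-k}$ and the treatment of the negative part of $\spec L$ makes explicit a point the paper leaves implicit.
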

%
%
%

In order to describe $\Tr U_{T}$ for the non-trivial periods $T \neq 0$, we require the following concepts from the theory of pseudodifferential operators. 
Let $\sE \to M$ be a smooth complex vector bundle over a manifold $M$ and $P$ a pseudodifferential operator of order at most $m \in \R$ acting on compactly supported sections of $\sE$ valued in the half-density bundle $\varOmega^{\nicefrac{1}{2}} \to M$ which will be put down just for notational simplicity. 
$P$ is said to be non-characteristic at some element $\xxi$ in the punctured cotangent bundle $\dotCoTanM$ of $M$ if $\tilde{p} \circ \symb{P} - \mathtt{1}$ belongs to the symbol class of order $-1$ in a conic neighbourhood of $\xxi$ for some symbol $\tilde{p}$ of order $-m$. 
The set of all characteristic points of $P$ is denoted by $\Char{P}$ 
(see e.g.~\cite[Def. 18.1.25]{Hoermander_Springer_2007}). 
We remark that the subprincipal $\subSymb{P}$ of $P$ is an invariantly defined $\Hom{\sE, \sE}$-valued homogeneous function on $\dotCoTanM$~\cite[$(5.2.8)$]{Duistermaat_ActaMath_1972}  
(see also~\cite[Thm. 18.1.33]{Hoermander_Springer_2007}).  
%
%
%
\begin{definition} \label{def: P_compatible_connection}
	Let $(\sE, F \to M; \connectionE, \nabla^{\ms F})$ be two smooth complex vector bundles over a smooth manifold $M$, equipped with their connections $\connectionE, \nabla^{\ms F}$, and $P$ a pseudodifferential operator from half-density-valued compactly supported smooth sections of $\sE$ to half-density-valued smooth sections of $F$. 
	A connection $\nabla^{\ms \sE \otimes F}$ on the tensor product bundle $\sE \otimes F \to M$ will be called $P$-compatible if and only if ~\cite{Islam}  
	\begin{equation}
		\mathsf{\Gamma} \big( (\rd_{x} \pi) X_{p} \big) \xxi = \ri \subSymb{P} \xxi, 
		\quad 
		\forall \xxi \in \Char{P},   
	\end{equation}
	where $\subSymb{P}$ resp. $\Char{P}$ are the subprincipal symbol resp. the characteristic set of $P$, $\mathsf{\Gamma}$ is the connection $1$-form of $\nabla^{\ms \sE \otimes F}$ and $\pi: \dotCoTanM \to M$ is the punctured cotangent bundle. 
	In other words, $\nabla^{\ms \sE \otimes F}$ induces the covariant derivative 
	\begin{equation}
		\nabla_{X_{p}}^{\pi^{*} \Hom{\sE, F}} = X_{p} + \mathsf{\Gamma} \big( (\rd \pi) X_{p} \big) 
	\end{equation}
	on the bundle $\pi^{*} \Hom{\sE, F} \to \dotCoTanM$ along the Hamiltonian vector field $X_{p}$ generated by the principal symbol $p$ of $P$.  
\end{definition}
%
%
%

As mentioned above, the Clifford module bundle $\big( \sE \to M, (\cdot|\cdot), \symb{D} \big)$ is naturally furnished with the unique Weitzenb\"{o}ck connection $\nabla$ (see Section~\ref{sec: Dirac_op}). 
Let $\connectionEndPiE_{X_{\fg}}$ be the $D^{2}$-compatible covariant derivative with respect to the geodesic vector field $X_{\fg}$ on $\pi^{*} \Hom{\sE, \sE} \to \dotCoTansM$. 
By $\Hol := C_{\mathrm{Hol}}^{\infty} \big( \dotCoTansM; \pi^{*} \Hom{\sE, \sE} \big)$ we will denote the set of all sections of $\pi^{*} \Hom{\sE, \sE}$ invariant under the holonomy group of the  parallel transporter $\cT$ with respect to $\connectionEndPiE_{X_{\fg}}$.  

We suppose that $\rP_{\gamma}$ is the linearised Poincar\'{e} map of a periodic geodesic $\gamma$ 
(see e.g.~\cite[Sec. 7.1]{Abraham_AMS_1978},~\cite{Strohmaier_AdvMath_2021}). 
Then $\gamma$ is called non-degenerate if $1$ is not an eigenvalue of $\rP_{\gamma}$. 

With all these devises, the precise characterisation of $\Tr U_{T \neq 0}$ can be stated as follows. 

%
%
%
\begin{theorem} \label{thm: trace_formula_t_T}
	As in the set-ups of Proposition~\ref{prop: length_spectrum} and Theorem~\ref{thm: trace_formula_t_zero}: if the periods $T$ are discrete and the set $\cP_{T}$~\eqref{eq: def_periodic_pt_Killing_flow} of periodic lightlike geodesic strips of $\varXi_{T}^{\cN}$ is a finite union of non-degenerate periodic orbits $\gamma$ then  
	\begin{equation}
		(\Tr U_{T}) (t) = \sum_{\gamma \in \cP_{T}} u_{\gamma} (t) + v_{\ms T} (t), 
	\end{equation}
	where $v_{\ms T}$ is a distribution on $\R$ that is smooth in the vicinity of $t=T$ and $u_{\gamma} (t)$'s are Lagrangian distributions having singularities at $t=T_{\gamma}$ with the asymptotic expansion  
	\begin{eqnarray} \label{eq: asymptotic_expansion_Lagrangian_dist_T}
		\lim_{t \to T_{\gamma}} (t - T_{\gamma}) u_{\gamma} (t) 
		& \sim & 
		\frac{1}{2 \pi}  
		\int_{ \kF_{T_{\gamma}} } \tr \Big( \symb{D} (\gamma) \, \cT_{\varXi_{\ms T}^{\ms \cN} (\gamma)} \symb{D} (x, \zeta) \Big) \frac{\re^{- \ri \pi \km (\gamma) / 2} |\rd T_{\gamma}|}{\sqrt{| \det (I - \rP_{\gamma}) |}} \nu
		+ \ldots, 
		\nonumber \\ 
		\nu 
		& := & 
		\int_{\R_{\geq 0}} \re^{- \ri (t - T) \tau} \Big( \frac{\tau}{2 \pi} \Big)^{d-2} \rd \tau, 
	\end{eqnarray}
	where $\km (\gamma)$ resp. $\rP_{\gamma}$ are the Maslov index resp. the Poincar\'{e} map of $\gamma$, $\mathcal{T}_{\gamma}$ is an element of the holonomy group $\Hol_{\gamma}$ with respect to the $D^{2}$-compatible Weitzenb\"{o}ck connection at base point $\gamma \in \cN$ whose projection on $\sM$ is $x$, and 
	\begin{equation} \label{eq: def_F_T}
		\kF_{T} := \left\{ \gamma \in \cN \,|\, \tau = - H (\gamma), \varXi_{T}^{\cN} (\gamma) = \gamma \right\} 
	\end{equation}
	is the fixed point set of $\varXi_{T}^{\cN}$.  
\end{theorem}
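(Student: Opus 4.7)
The plan is to localise, apply a clean-intersection / stationary phase argument, and read off the leading contribution of each periodic lightlike geodesic strip from the principal symbol of a Fourier integral operator whose propagation of singularities matches the induced Killing flow $\varXi_{t}^{\cN}$.

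First, I would reduce to a neighbourhood of a single non-degenerate periodic orbit. Since the periods are assumed discrete and $\cP_{T}$ is a finite disjoint union of non-degenerate orbits $\gamma$, one can choose a smooth cut-off $\chi(t)$ supported in a small interval around $T$ containing no other period, together with a microlocal partition of unity on $\cN$ isolating each $\gamma$. Proposition~\ref{prop: length_spectrum} guarantees that outside such neighbourhoods $\Tr U_{t}$ is smooth, so modulo $C^{\infty}$ one reduces to summing contributions $u_{\gamma}(t)$, each supported microlocally near a single orbit.

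Next, I would use the identification $\Tr U_{t} = \tr_{\sH}\varXi_{t}^{*}$ from~\eqref{eq: Tr_Cauchy_evolution_op_Dirac_Tr_Killing_flow}. On $\ker D$ the operator $\varXi_{t}^{*}$ is realised via the composition of the Cauchy problem solution operator with the bundle lift $\hat{\varXi}_{t}$ of the Killing flow (Figure~\ref{fig: bundle_lift_spacetime_isometry}). Theorem~\ref{thm: trace_formula_t_zero} already shows that $\Tr U_{t}$ is a Lagrangian distribution on $\R$ with Lagrangian $\Lambda_{T}$; the order and the Lagrangian being fixed, the leading singularity at $t=T$ is completely determined by the principal symbol of this Lagrangian distribution at the fixed point set $\kF_{T}$. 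Concretely, I would write $U_{t}$ as an FIO associated to the symplectic Killing flow and express $\Tr U_{T}$, modulo lower-order terms, as an oscillatory integral
\[
\int_{\kF_{T}} \re^{-\ri (t-T)\tau}\, a(\gamma)\, d\gamma
\]
where $d\gamma$ is the Liouville measure on $\kF_{T}$ and $a$ is the principal symbol of the canonical relation evaluated at the fixed point.

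The core computation is the symbol $a(\gamma)$. Propagating $\symb{D}$ along $X_{\fg}$ by the $D^{2}$-compatible Weitzenb\"{o}ck covariant derivative $\connectionEndPiE_{X_{\fg}}$ yields, after one period, the parallel transporter $\cT_{\varXi_{T}^{\cN}(\gamma)}$; together with the hermitian structure~\eqref{eq: hermitian_form_Dirac_type_op} coming from $\symb{D}(x,\zeta)$, this gives the fibre trace factor $\tr\bigl(\symb{D}(\gamma)\, \cT_{\varXi_{T}^{\ms\cN}(\gamma)} \symb{D}(x,\zeta)\bigr)$. This is exactly where using the intrinsic Weitzenb\"{o}ck connection $\nabla$ induced by $D$ eliminates Sandoval's ad-hoc splitting into $\tilde{\cT}$ plus an average of $W$: the holonomy data is repackaged inside a single connection naturally determined by $D$, so that $\cT_{\gamma}\in\Hol_{\gamma}$ appears without auxiliary choices.

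Finally, I would apply the clean-intersection form of stationary phase (Duistermaat--Guillemin) to the oscillatory integral over $\kF_{T}$. Non-degeneracy of the linearised Poincar\'{e} map $\rP_{\gamma}$ ensures that $\varXi_{T}^{\cN}$ meets the identity cleanly on the quotient transverse to the orbit, producing the Gaussian determinantal factor $|\det(I-\rP_{\gamma})|^{-1/2}$ and the Maslov phase $\re^{-\ri\pi\km(\gamma)/2}$. The remaining integration along the orbit produces the invariant measure $|dT_{\gamma}|$, and the dimensional power of $\tau$ is fixed by the Lagrangian order from Theorem~\ref{thm: trace_formula_t_zero}. Summing over $\gamma\in\cP_{T}$ and absorbing all lower-order pieces into $v_{T}$ yields~\eqref{eq: asymptotic_expansion_Lagrangian_dist_T}.

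The main obstacle I expect is the symbol calculus step: proving that transport by $\connectionEndPiE_{X_{\fg}}$ is exactly the correct propagation law for the FIO symbol of the time-evolution operator $U_{t}$, given that in the Lorentzian stationary setting the Dirac Hamiltonian $H_{D}$ is time-dependent and not itself of Dirac-type, so Sandoval's argument does not apply verbatim. Once the $D^{2}$-compatibility of $\nabla$ along $X_{\fg}$ is established and the subprincipal symbol of $D^{2}$ is matched to the connection $1$-form as in Definition~\ref{def: P_compatible_connection}, the rest is the standard Duistermaat--Guillemin clean-intersection machinery applied on the reduced phase space $\cN$.
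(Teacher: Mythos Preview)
Your proposal is correct and follows essentially the same route as the paper: clean-intersection FIO calculus applied to the composition of $\uppi_{*}\circ\varDelta^{*}$ with the Schwartz kernel of $U_{t}$, the Duistermaat--Guillemin half-density $|\rd T|\otimes\sqrt{|\rd\tau|/|\det(I-\rP_{\gamma})|}$ on the fibre over $(T,\tau)$, and the Keller--Maslov contribution producing $\km(\gamma)$. The obstacle you anticipate is resolved in the paper not by analysing $H_{D}$ directly but by writing $U_{t}=\cR\circ\varXi_{-t}^{*}\circ\cR^{-1}$ with $\cR^{-1}=-\ri\,G\circ(\iota_{\ms\varSigma}^{*})^{-1}\symb{D}(\cdot,\zeta)$, so that the transport law for $\symb{\fU_{t}}$ is inherited from the causal propagator $\fG$, whose off-diagonal principal symbol already carries the parallel transporter $w$ for the $D^{2}$-compatible Weitzenb\"{o}ck connection (Lemma~\ref{lem: causal_propagator_Dirac}).
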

%
%
%

At this point we would like to comment on the issue of parallel transporter raised at the beginning of this article and reproduce  
Sandoval's work~\cite[Thm. 2.8]{Sandoval_CPDE_1999} 
as a special case of this theorem. 
We set $\upalpha = 0$ and $\upbeta = 1$ in~\eqref{eq: def_spacetime_metric_SSST} so that the Dirac equation~\eqref{eq: Dirac_eq} on the ultrastatic spacetime becomes 
\begin{equation} \label{eq: Dirac_Hamiltonian_ultrastatic}
	- \ri \partial_{t} u = H_{D} u, 
	\quad 
	H_{D} := \fc (\rd t) \big( \hat{D} - U \big), 
\end{equation}
where $H_{D}$ is the Dirac Hamiltonian, $\fc$ is the Clifford multiplication (see~\eqref{eq: def_Clifford_multiplication}) and $U$ is the potential (see~\eqref{eq: Dirac_op}) allowed by the most general Dirac-type operator. 
Sandoval plumped for $U = 0$ and used the parallel transporter $\tilde{\cT}$ corresponding to $\tilde{D}$ instead of $\hat{D}$ and hence $\tilde{T}$ together with a suitable average of $W := \hat{D} - \tilde{D}$ showed up under the $\tr$ in~\eqref{eq: asymptotic_expansion_Lagrangian_dist_T} in her result. 
However, no such artificial choices are necessary in our formulation at all, rather $\cT$ naturally induces the parallel transport $\hat{\cT}$ with respect to $\hat{D}$ for the preceding choices.      

In the future, several generalisations have been planned. 
For instance, we intend to address the spectral asymptotics on stationary black holes and explore the semi-classical regime.   
We also wish to extend the study for Hodge-d'Alambertians and connect with interesting applications on relativistic quantum chaos on curved spacetimes. 
%
%
%
%
%
%
%
%
%
%
\subsection{Convention} 
\label{sec: convention}
Throughout the article, a vector bundle $\sE \to M$ means a smooth complex vector bundle over a Hausdorff, second countable $d \in \N$-dimensional smooth manifold $M$. 
We use the notation $\dot{\sE}$ to symbolise the zero-section removed part of $\sE$. 
Occasionally, $\sE$ is endowed with a non-degenerate sesquilinear form $(\cdot|\cdot)$ which is assumed to be anti-linear in its first argument. 
By a hermitian form $\langle \cdot | \cdot \rangle$ on $\sE$ we mean a positive-definite $(\cdot|\cdot)$.
A Lorentzian manifold and its special case - a globally hyperbolic manifold, both are denoted by $\spacetime$ with metric signature $+ - \ldots -$,  and $d := \dim \sM \geq 2$.  
Unless mentioned otherwise, $(\varSigma, \iota)$ represents an immersed (resp. embedded)  submanifold of a manifold $M$ (resp. a globally hyperbolic manifold $\spacetime$). 
Then $E_{\varSigma}$ resp. $\coTanM_{\varSigma} \to \varSigma$ represent the pullback bundles of $\sE$ resp. the cotangent bundle $\coTanM$ via the immersion $\iota$.  

Let $\halfDen \to M$ be the bundle of half-densities over $M$. 
We denote the vector spaces of smooth and compactly supported smooth half-densities on $\sE$ by $C^{\infty} (M; \sE \otimes \halfDen)$ and $C_{\rc}^{\infty} (M; \sE \otimes \halfDen)$, and endow with the Fr\'{e}chet space and the inductive limit topologies, respectively. 
The vector spaces of distributional half-densities $\cD' (M; \sE \otimes \halfDen) := \big( C_{\rc}^{\infty} (M; \sE^{*} \otimes \halfDen) \big)'$ and compactly supported distributional half-densities $\mathcal{E}' (M; \sE \otimes \halfDen) := \big( C^{\infty} (M; \sE^{*} \otimes \halfDen) \big)'$ on $\sE$ are defined by the topological duals of $C_{\rc}^{\infty} (M; \sE^{*} \otimes \halfDen), C^{\infty} (M; \sE^{*} \otimes \halfDen)$, which are equipped with the weak $*$-topologies induced by the topologies of $C_{\rc}^{\infty} (M; \sE^{*} \otimes \halfDen)$ resp. $C^{\infty} (M; \sE^{*} \otimes \halfDen)$, where $\sE^{*} \to M$ is the dual bundle of $\sE$.  
Unless stated otherwise, we have notationally suppressed $\halfDen$ on a generic manifold $M$ for brevity, whereas, on a Lorentzian manifold $\spacetime$ we use the natural Lorentzian volume element $\dVolg$ to identify $\secsME = C^{\infty} (\sM; \sE \otimes \halfDen), \dualComSecsME = \cD' (\sM; \sE \otimes \halfDen)$. 
Additionally, one defines the space $C_{\mathrm{sc}}^{\infty} (\sM; \sE)$ of spatially compact smooth sections of $\sE$ as the set of all $u \in \secsME$ for which there exists a compact subset $K$ of $\sM$ such that $\supp{u} \subset J (K)$ where $J (K) := J^{+} (K) \cup J^{-} (K)$ and $J^{\pm} (K)$ are the causal future(past) of $K$; as a vector space $C_{\mathrm{sc}}^{\infty} (\sM; \sE) \subset \secsME$~\cite[Not. 3.4.5]{Baer_EMS_2007}.   
Echoing this spirit $C^{\infty} (M)$ denotes the set of all complex-valued smooth functions on $M$. 

The principal symbol of a normally hyperbolic operator $\square$ on $E$ is given by 
\begin{equation}
	\symb{\square} \xxi := \fg_{x}^{-1} (\xi, \xi) \, \one_{\End \sE_{x}}, \quad \forall \xxi \in C^{\infty} (\sM, \coTanM).  
\end{equation}
In other words, in any coordinate frame $(\partial / \partial x^{i})$ on a chart $\big( U, (x^{i}) \big)$ of $M$, after bundle trivialisations:   
\begin{equation}
	\square |_{U} = - \fg^{ij} \parDeri{x^{i}}{} \parDeri{x^{j}}{} + \textrm{lower order terms}, 
	\quad i, j = 1, \ldots, d   
\end{equation}
where Einstein's summation convention has been used. 

The Hamiltonian~\eqref{eq: def_Hamiltonian_metric} of the geodesic flow $\varPhi_{s}$ is defined with a $1/2$ prefactor so that the relativistic Hamiltonian flow is identical to the geodesic flow as in ~\cite[Rem. 7.2]{Strohmaier_AdvMath_2021}.    

In this article, only the polyhomogeneous symbol class $S^{m} (\cdot)$ has been used and we follow the  
convention\footnote{In 
	particular, our convention is identical to that used by  
	Strohmaier-Zelditch~\cite[Sec. 7]{Strohmaier_AdvMath_2021}  
	but slightly different from   
	Duistermaat-Guillemin~\cite[Sec. 7]{Duistermaat_Inventmath_1975} 
	and Sandoval~\cite[Sec. 2.1]{Sandoval_CPDE_1999}.} 
as in 
H\"{o}rmander's treatises~\cite{Hoermander_Springer_2007, Hoermander_Springer_2009} 
for Fourier integral operators.   
Thus, a Fourier integral operator $A : C_{\mathrm{c}}^{\infty} (N; F) \to \cD' (M; E)$ of order at most $m \in \R$ associated with a homogeneous canonical relation 
(see e.g.~\cite[Def. 21.2.12]{Hoermander_Springer_2007}) 
$C \subset \dotCoTanM \times \dotCoTanN$ which is closed in $\dotCoTanMN$, is a continuous linear operator whose Schwartz kernel $\fA$ is an element in the space $I^{m} \big( M \times N, C'; \Hom{F, E} \big)$ of Lagrangian distributions 
(see e.g.~\cite[Def. 25.2.1]{Hoermander_Springer_2009}), 
where $F \to N$ is a smooth complex vector bundle over a smooth manifold $N$.  
Locally this means that $\fA = (\fA^{r}_{k})_{\rk \sE \times \rk F}$ can be identified with a matrix of scalar Lagrangian distributions $\fA^{r}_{k}$ on respective Euclidean spaces and modulo smoothing kernels each $\fA^{r}_{k}$ is of the form 
(see e.g.~\cite[Prop. 25.1.5$'$]{Hoermander_Springer_2009}) 
\begin{equation} \label{eq: FIO_local}
	\fA_{k}^{r} = (2 \pi)^{- (\dim M + \dim N + 2n - 2e) / 4} \int_{\Rn} \rd \theta \, \re^{\ri \varphi (x, y; \theta)} \, \fa_{k}^{r} \xytheta,   
\end{equation}
where $\varphi$ is a clean phase function 
(see e.g.~\cite[Def. 21.2.15]{Hoermander_Springer_2007}) 
with excess $e$ and $\fa_{k}^{r}$ is a symbol of order $m + (\dim M + \dim N - 2n - 2e)/4$ having support in the interior of a sufficiently small conic neighbourhood of the fibre-critical manifold $\sC$ of $\varphi$ contained in the domain of definition of $\varphi$. 
As per se the chosen symbol class, each $\fa^{r}_{k}$ can be expressed by the asymptotic series
(see e.g.~\cite[Prop. 18.1.3, Def. 18.1.5]{Hoermander_Springer_2007}) 
\begin{equation} \label{eq: def_polyhomogeneous_symbol}
	\fa_{k}^{r} \sim a_{k}^{r} + \sum_{l \in \N} a_{k; l}^{r},   
\end{equation}
for large $\theta$, where $a^{r}_{k}$ is a function of $\xytheta$ whose degree of homogeneity is the same as that of $\fa^{r}_{k}$ and the degrees of $a^{r}_{k; l}$'s are dropped by a factor of $l$'s from the top-degree when $|\theta| \geq 1$. 
In the $|\theta| < 1$ regime, one multiplies the preceding expression by a cutoff function vanishing identically near $\theta = 0$ and which becomes identity whenever $|\theta| \geq 1$. 
The principal symbol $\symb{\fA}$ of $\fA$ is locally given by the matrix of elements 
(see e.g.~\cite[Prop. 25.1.5]{Hoermander_Springer_2009}) 
\begin{equation} \label{eq: symbol_FIO_nondegenerate}
	\sigma_{A_{k}^{r}} \xxiyeta = a_{k}^{r} \dfrac{ |\rd \xi|^{\nicefrac{1}{2}} |\rd \eta|^{\nicefrac{1}{2}} }{\sqrt{|\det (\mathrm{Hess} \, \varphi)|}} \re^{\ri \pi \, \sgn (\mathrm{Hess} \, \varphi) / 4}, 
\end{equation}
when $\varphi$ is non-degenerate and here $\mathrm{Hess}$ denotes the Hessian; for a clean phase function the expression is a bit complicated and available 
in~\cite{Duistermaat_Inventmath_1975} 
(see also~\cite[(25.1.5$'$)]{Hoermander_Springer_2009}). 
In the context of principal symbols, we have used $\equiv$ to mean modulo the Keller-Maslov contribution $\re^{\ri \pi \, \sgn (\mathrm{Hess} \, \varphi) / 4}$, i.e., 
\begin{equation}
	\sigma_{A_{k}^{r}} \xxiyeta \equiv a_{k}^{r} \dfrac{ \sqrt{|\rd \xi| |\rd \eta|} }{\sqrt{|\det (\mathrm{Hess} \, \varphi)|}}.
\end{equation}
The Fourier transform $\cF (u)$ of any $u \in L^{1} (\Rd, \rd x)$ is   
\begin{equation} \label{eq: def_Fourier_transformation} 
	\mathcal{F} (u) := \int_{\Rd} \re^{ - \ri x \cdot \theta} u \, \rd x  
	~\quad~\textrm{and}~\quad  
	u = \frac{1}{(2 \pi)^{d}} \int_{\Rd} \re^{ \ri x \cdot \theta} \cF (u) \, \rd \theta,    
\end{equation}
whenever $\cF (u)$ is integrable and here $\cdot$ is either the Euclidean or the Minkowski inner product depending on the context. 

We have used the symbols $[\cdot, \cdot]_{\pm}$ for the (anti-)commutator brackets. 
%
%
%
%
%
%
%
%
%
%
\subsection{Proof strategy and novelty} 
\label{sec: proof_strategy}
We divide the description in several steps to give a panorama view. 
%
%
%
%
%
%
%
%
%
%
\subsubsection{$U_{t}$ as a Fourier integral operator} 
This pivotal idea was originally due to 
Duistermaat and Guillemin~\cite{Duistermaat_Inventmath_1975} 
who worked it out (modulo smoothing operators) in the context of scalar half-wave operators on an ultrastatic spacetime. 
We, however, have not followed their approach directly, instead have expressed $U_{t}$ in terms of the Killing flow $\varXi_{t}^{*}$ and the causal propagator (see~\eqref{eq: def_causal_propagator_Dirac}) $G$ of $D$ by propounding 
Strohmaier and Zelditch's work on d'Alembertian~\cite{Strohmaier_AdvMath_2021}. 
More precisely, there exists unique advanced and retarded Green's operators for $D$ owing to the global hyperbolicity of $\spacetime$ and hence $G$ together with the restriction operator $\iota_{\varSigma}^{*}$ (see Appendix~\ref{sec: restriction_op}) pave the way to construct the Cauchy restriction operator $\cR$ as described in Section~\ref{sec: Cauchy_problem_Dirac}. 
By Assumption~\ref{asp: trace_formula}~\ref{asp: commutator_Dirac_op_Killing_flow}, the time flow determines the time evolution of Cauchy data. 
The combination of these facts allows us to express $U_{t}$ as a Lagrangian distribution as inscribed in Lemma~\ref{lem: symbol_Tr_U_0}. 
The preceding notions are depicted schematically in Figure~\ref{fig: time_evolution_causal_propagator_restriction_op_Killing_flow}. 
In order to describe $U_{t}$ as a Fourier integral operator one is required to obtain such a description of $G$ and $\iota_{\ms \varSigma}^{*}$. 
Both are well-known for the scalar case and the bundle generalisation is straightforward for the latter. 
But the former demands an intricate treatment due to possible bundle curvature, which has been spelled out in Lemma~\ref{lem: causal_propagator_Dirac}. 
In fact, this is precisely from where the expedient choice (as discussed in the introduction) in Sandoval's work stems. 
We have computed $\symb{\fG}$ in Lemma~\ref{lem: causal_propagator_Dirac} deploying the $D^{2}$-compatible Weitzenb\"{o}ck connection in an intrinsically geometric fashion. 
As a consequence, it solves the relevant leading order transport equation concisely from where the holonomy group shows up elegantly. 
Furthermore, it closes the ad-hoc consideration in Sandoval's analysis as explained after Theorem~\ref{thm: trace_formula_t_T}.  

\begin{center}
	\begin{tikzpicture}
		\node (a) at (0, 0) {$C^{\infty} (\varSigma; \sE_{\varSigma})$}; 
		\node (b) at (8, 0) {$C^{\infty} (\varSigma_{t}; \sE_{\varSigma_{t}})$}; 
		\node (c) at (0, 3) {$C_{\mathrm{c}}^{\infty} \big( \iota_{\ms \varSigma} (\varSigma); \sE \big)$}; 
		\node (d) at (8, 3) {$C_{\mathrm{c}}^{\infty} \big( \iota_{\ms \varSigma_{t}} (\varSigma_{t}); \sE \big)$}; 
		\node (e) at (4,3) {$\ker D$}; 
		\draw[->] (a) -- (b); 
		\draw[->] (a) -- (c); 
		\draw[->] (b) -- (d); 
		\draw[->] (e) -- (a); 
		\draw[->] (e) -- (b); 
		\draw[->] (c) -- (e); 
		\draw[->] (d) -- (e); 
		\draw[->] (d) .. controls (4,5) .. (c);
		\node[above] at (4,0) {$U_{t}$}; 
		\node[left] at (2, 1.5) {$\cR_{}$}; 
		\node[right] at (6, 1.5) {$\cR_{t}$}; 
		\node[left] at (0, 1.5) {$(\iota_{\ms \varSigma}^{*})^{-1}$}; 
		\node[right] at (8, 1.5) {$(\iota_{\ms \varSigma_{t}}^{*})^{-1}$}; 
		\node[above] at (5.75, 3) {$G$}; 
		\node[above] at (2.25, 3) {$G$}; 
		\node[below] at (4, 4.5) {$\varXi_{t}^{*}$};
	\end{tikzpicture}
	\captionof{figure}{The 
		time evolution map $U_{t}$ in terms of the causal propagator $G$ and the induced Killing flow $\varXi_{t}^{*}$. Here, $\cR$ is the Cauchy restriction map and $\iota_{\ms \varSigma}^{*}$ is the restriction map. 
	}
	\label{fig: time_evolution_causal_propagator_restriction_op_Killing_flow}
\end{center} 

In contemporary of this work, 
Capoferri and Murro~\cite[Thm. 1.1]{Capoferri} 
have obtained an oscillatory integral representation of $U_{t}$ (modulo smoothing operators) for the reduced massless Dirac equation (in the sense of Definition 3.5 of their paper) on a $4$-dimensional spatially compact globally hyperbolic spin-spacetime using the global phase function approach~\cite{Laptev_CPAM_1994} of Fourier integral operators. 
An antecedent of this analysis can be traced back to 
Capoferri and Vassiliev~\cite{Capoferri_2020} 
who have constructed $U_{t} (\tilde{D})$~\eqref{eq: def_U_t_Riem_Dirac} (modulo smoothing operators) as a summation of two invariantly defined oscillatory integrals,
global in space and in time, with distinguished complex-valued phase functions, when $\tilde{D}$ is a massless spin-Dirac operator on a $3$-dimensional closed Riemannian manifold. 

It is worthwhile to mention that one cannot deploy the algorithm used in~\cite{Capoferri, Capoferri_2020} in a straightforward way to derive the Fourier integral description of $U_{t}$ (Lemma~\ref{lem: symbol_Tr_U_0}) in the present setting. 
This is primarily because the Dirac Hamiltonian
\begin{equation} \label{eq: def_Dirac_Hamiltonian_SSST}
	H_{D} := - \frac{\ri}{\upbeta^{2} - \| \upalpha \|_{\fh}^{2} - \fh_{ij} \upalpha^{i} \fc (\rd t) \fc (\rd x^{j})}  
	\Big( \fc (\rd t) \big( \fh_{ij} \fc (\rd x^{i}) \nabla^{j} + U \big) + \fh_{ij} \upalpha^{i} \nabla^{j}  \Big)
\end{equation}
on a standard stationary spacetime is not of Dirac-type (albeit it is a first-order elliptic operator) which is one of the key assumptions of the hindmost literatures. 
%
%
%
%
%
%
%
%
%
%
\subsubsection{Principal symbol of $\Tr U_{t}$} 
If $\sU_{t} (x, y) := \fU_{t} (x, y) \, \rd t \otimes \sqrt{|\dVolh (x)|} \otimes \sqrt{|\dVolh (y)|}$ denotes the Schwartz kernel of $U_{t}$ then one considers the smoothed-out operator $\fU_{\rho}$  
\begin{equation} \label{eq: def_smoothed_time_evolution_op}
	\fU_{\rho} := \int_{\R} \fU_{t} \, \cF^{-1} (\rho) \, \rd t, 
	\quad 
	(\cF^{-1} \rho) (t) = \frac{1}{2 \pi} \int_{\spec L} \re^{\ri t \lambda} \rho (\lambda) \, \rd \lambda
\end{equation}
where $\rho$ is a Schwartz function on $\R$ such that $\supp (\cF^{-1} \rho)$ is compact. 
Let $\fU_{\rho} (x, x)$ represents the embedding of $\fU_{\rho} (x, y)$ to the diagonal in $\varSigma \times \varSigma$. 
The distributional trace $\Tr U_{t}$ is then obtained by 
\begin{equation} \label{eq: def_tr_smoothed_Cauchy_evolution_op}
	\Tr U_{\rho} := \int_{\varSigma} \tr \big( \fU_{\rho} (x, x) \big) \, \dVolh (x), 
\end{equation}
where $\tr : \End \sE \to \C$ is the endomorphism trace. 

In order to compute $\symb{\Tr U_{t}}$ we employ the bundle generalisation of 
Duistermaat and Guillemin's~\cite{Duistermaat_Inventmath_1975} 
idea, due to 
Sandoval~\cite{Sandoval_CPDE_1999}. 
One notices that the mapping $\sU_{t} (x, y) \mapsto \fU_{t} (x, x) \, \dVolh (x)$ can be viewed as the pullback 
\begin{equation}
	\varDelta^{*} : 
	C_{\rc}^{\infty} \big( \R \times \varSigma \times \varSigma; \varOmega_{\R} \otimes \halfDen_{\ms \varSigma \times \varSigma} \otimes \Hom{\sE_{\varSigma}, \sE_{\varSigma}} \big) 
	\to 
	C_{\rc}^{\infty} \big( \R \times \varSigma; \varOmega_{\R} \otimes \varOmega_{\ms \varSigma} \otimes \End \sE_{\varSigma} \big) 
\end{equation}
of $\sU_{t}$ via the diagonal embedding 
\begin{equation} \label{eq: def_diagonal_embedding_R_Cauchy}
	\varDelta : \R \times \varSigma \to \R \times \varSigma \times \varSigma. 
\end{equation}
It follows that $\varDelta^{*}$ is a Fourier integral operator of order $(d-1) / 4$ associated to the canonical relation 
$
C_{\varDelta^{*}} 
:= 
\{ (t, \tau; x, \xi + \eta; t, \tau; x, \xi; x, \eta) \in \dotCoTan \R \times \coTansM_{\varSigma} \times \dotCoTan \R \times \dotCoTansM_{\varSigma} \times \dotCoTansM_{\varSigma} \}
$~\cite[$(1.20)$]{Duistermaat_Inventmath_1975}. 
Hence, for a fixed $t$, $\tr (\varDelta^{*} \sU_{t})$ is density on $\varSigma$  which can be integrated. 
The integration over $\varSigma$ is the pushforward 
\begin{equation}
	\uppi_{*} : 
	C_{\rc}^{\infty} \big( \R \times \varSigma; \varOmega_{\R} \otimes \varOmega_{\ms \varSigma} \big) 
	\to 
	C^{\infty} \big( \R; \varOmega_{\R}), 
	~ \tr (\varDelta^{*} \sU_{t}) \mapsto \uppi_{*} \big( \tr (\varDelta^{*} \sU_{t}) \big) 
\end{equation}
of the Cartesian projection 
\begin{equation}
	\uppi : \R \times \varSigma \to \R,  
\end{equation}
which is also a Fourier integral operator of order $1/2 - (d-1)/4$ associated to the canonical relation 
$
C_{\uppi_{*}} 
:= 
\{ (t, \tau; t, \tau; x, 0) \in \dotCoTan \R \times \dotCoTan \R \times \coTansM_{\varSigma} \}
$~\cite[$(1.22)$]{Duistermaat_Inventmath_1975}. 
Therefore 
\begin{equation} \label{eq: Tr_U_t_pushforward_tr_pullback}
	\Tr U_{t} = \uppi_{*} \circ \tr (\varDelta^{*} \sU_{t}).   
\end{equation}
Alternatively, one can also integrate $\varDelta^{*} \sU_{t}$ over $\varSigma$ so that $\uppi_{*} (\varDelta^{*} \sU_{t})$ is a $\End \sE_{\varSigma}$-valued density on $\R$ by reckoning 
$
\uppi_{*} : 
C_{\rc}^{\infty} \big( \R \times \varSigma; \varOmega_{\R} \otimes \varOmega_{\varSigma} \otimes \End \sE_{\varSigma} \big) 
\to 
C^{\infty} \big( \R; \varOmega_{\R} \otimes \End \sE_{\varSigma})
$.   
Then taking the endomorphism trace one arrives at 
\begin{equation} \label{eq: Tr_U_t_tr_pushforward_pullback}
	\Tr U_{t} = \tr (\uppi_{*} \circ \varDelta^{*} \sU_{t}). 
\end{equation}
Hence, in this sense 
\begin{equation}
	\uppi_{*} \circ \tr \circ \varDelta^{*} = \tr \circ \uppi_{*} \circ \varDelta^{*}
\end{equation}
and one utilises the clean intersection (as detailed in Appendix~\ref{sec: density_canonical_relation}) between $\uppi_{*} \circ \varDelta^{*}$ and $\sU_{t}$ to compute the principal symbol of $\Tr U_{t}$.  
%
%
%
%
%
%
%
%
%
%
\subsubsection{Spectral theory} 
Albeit finite energy solutions of Dirac equation do not live in $L^{2}$ sections of $\sE$, $\ker D$ can be naturally given a hermitian structure by equipping with a Killing flow invariant hermitian form $\scalarProdTwo{\cdot}{\cdot}$ owing to Assumption~\ref{asp: trace_formula}~\ref{asp: hermitian_form_Dirac_op_SST}.  
It is noteworthy that the non-definiteness of $(\cdot|\cdot)$ is a characteristic feature of any Lorentzian spin-manifold: a positive-definite $(\cdot|\cdot)$ invariant under the spin group only exists for the Riemannian case. 
%
%
%
%
%
%
%
%
%
%
\subsubsection{Weyl law} 
There are a number of approaches to deriving the Weyl law 
(see e.g. the review~\cite{Ivrii_BullMathSci_2016}).  
Amongst those, we will use the 
Fourier-Tauberian argument~\cite{Hoermander_ActaMath_1968}  
(see also, e.g.~\cite[App. B]{Safarov_AMS_1997},~\cite{Safarov_JFA_2001} and references therein). 
The key idea is to relate the Weyl counting function with $\Tr U_{t}$ via the distributional Fourier transform  
\begin{equation} \label{eq: Weyl_counting_function_Tr_U_t}
	\frac{\rd \fN}{\rd \lambda} = \cF_{t \mapsto \lambda}^{-1} (\Tr U_{t})  
\end{equation}
and compute the right hand side using the express for $\Tr U_{0}$. 
%
%
%
%
%
%
%
%
%
%
\subsection{Organisation of the paper} 
We end this section with a literature review of pertinent results. 
Section~\ref{sec: setup} provides the geometric setup of this article where we have briefly collected the rudimentary backgrounds on globally hyperbolic stationary spacetimes and on  Dirac-type operators together with their causal propagators. 
Derivation of the trace formula for $U_{t}$ resp. the spectral theory of $L$ have been presented in Section~\ref{sec: trace_Cauchy_evolution_op} resp. in Section~\ref{sec: spectral_theory_L}. 
The announced results on singularity analysis of $\Tr U_{t}$ have been proven in Section~\ref{sec: proof_trace_formula_nondegenerate_fixed_pt} where we have computed $\symb{\Tr U_{T}}$ for trivial $T = 0$ and non-trivial $T \neq 0$ periodic orbits. 
The key technical tool of this computation is the composition of Fourier integral operators which has been reviewed in Appendix~\ref{sec: density_canonical_relation} for the convenience of readers. 
To maintain a smooth flow of the main text, we have placed the restriction map on a vector bundle in Appendix~\ref{sec: restriction_op}. 
%
%
%
%
%
%
%
%
%
%
\subsection{Literature} 
\label{sec: literature}
The field of asymptotic (semiclassical) trace formulae stems from the study of Green's operator for a Schr\"{o}dinger operator in the limit of vanishing Planck's constant by  
Gutzwiller ~\cite{Gutzwiller_JMP_1971}.   
This seminal work is not entirely rigorous; see for instance, the 
expositions~\cite{Uribe_Cuernavaca_1998, Muratore-Ginanneschi_PR_2003} 
for a scrutinised discussion of his original idea.   
A number of mathematically diligent 
proofs 
(see e.g.~\cite[Thm. 3]{Meinrenken_ReptMathPhys_1992} 
and the expository articles~\cite{Uribe_Cuernavaca_1998, Verdiere_AIF_2007} 
with references therein)  
have been reported since then.  
Amongst these, 
Chazarain~\cite{Chazarain_InventMath_1974, Chazarain_CPDE_1980} (for Laplace-Beltrami operator on a closed Riemannian manifold)
and 
Duistermaat-Guillemin~\cite{Duistermaat_Inventmath_1975} (for a scalar, elliptic, selfadjoint and positive first-order pseudodifferential operator $P$ on a compact manifold without boundary)
deployed global Fourier integral operators to derive the complete singularity structure of the wave-trace. 
We refer to the  
monographs~\cite[Chap. XXIX]{Hoermander_Springer_2009},~\cite[Chap. 1]{Safarov_AMS_1997},~\cite[Chap. 11]{Guillemin_InternationalP_2013} 
for details.  

Bolte and Keppeler~\cite{Bolte_PRL_1998, Bolte_AnnPhys_1999} 
have generalised Gutzwiller's work for spin Dirac operators on Minkowski spacetime 
(see also the elucidating articles~\cite{Bolte_FoundPhys_2001, Bolte_JPA_2004} 
and references therein for chronological developments). 
In contemporary to their reports, mathematically rigorous analysis has been reported by 
Sandoval~\cite{Sandoval_CPDE_1999} 
promoting the Duistermaat-Guillemin framework for Dirac-type operators on a closed Riemannian manifold. 
Her result on Dirac-wave-trace invariants at the trivial period is closely related to the evaluation of the residues of the eta-invariant by 
Branson and Gilkey~\cite{Branson_JFA_1992}  
and the behaviour of eigenfunctions in the high energy limit by 
Jakobson and Strohmaier~\cite{Jakobson_CMP_2007}.  
In the particular case of a massless Dirac operator on a closed $3$-dimensional Riemannian spin  manifold,  
Capoferri and Vassiliev~\cite{Capoferri_2020}
have computed the third local Weyl coefficients by advancing the framework by  
Chervova \textit{et al}.~\cite{Chervova_JST_2013} 
(see also the review by 
Avetisyan \textit{et al}.~\cite{Avetisyan_JST_2016})
for asymptotic spectral analysis of a general elliptic first-order system;   
see~\cite[Sec. 11]{Chervova_JST_2013} 
for a bibliographic overview.
An in-depth investigation of different spectral coefficients of a selfadjoint Laplace-type operator on a hermitian vector bundle over a closed Riemannian manifold has been performed by 
Li and Strohmaier~\cite{Li_JGP_2016}.   
In particular, they have obtained the relevant coefficients for Dirac-type operators with a more general bundle endomorphism than in  
Sandoval~\cite{Sandoval_CPDE_1999} (identity endomorphism)
and in 
Branson-Gilkey~\cite{Branson_JFA_1992} (scalar endomorphism),  
and bridged the results by 
Chervova \textit{et al}.~\cite{Chervova_JST_2013} 
(and their follow up works as mentioned in~\cite{Li_JGP_2016}) 
with the known heat-trace invariants.  

The primary and common ingredient of~\cite{Bolte_PRL_1998, Bolte_AnnPhys_1999, Sandoval_CPDE_1999, Capoferri_2020} 
is to determine the time evolution operator (modulo smoothing operators) by solving the transport equations order by order.   
Bolte-Keppeler have also identified terms responsible for spin-magnetic and spin-orbit interactions in the semiclassical expression. 
However, they have finally considered a regularised truncated time evolution operator by introducing an energy localisation to deal with the continuous spectrum of Dirac Hamiltonian arising due to the non-compact Minkowski spacetime. 
Such restrictions were absent in the rest of the hindmost references as they considered closed Riemannian manifolds and utilised the full power of Fourier integral operator theory in contrast to Bolte-Keppeler who have worked with oscillatory integrals. 
A novel feature in the lines of research by 
Vassiliev and his collaborators~\cite{Chervova_JST_2013, Avetisyan_JST_2016} 
is the second term of the Weyl law (see the references cited in these papers for earlier works). 
Analogous results have been achieved by  
Li-Strohmaier~\cite{Li_JGP_2016} 
employing a different spectral analysis.  
Specifically, the global phase function approach~\cite{Laptev_CPAM_1994} 
of Fourier integral operators has been deployed in~\cite{Capoferri_2020} 
to construct (modulo smoothing operators) the solution operator of a massless spin-Dirac operator on a $3$-dimensional closed Riemannian manifold. 
They have also provided a closed formula for the principal symbol and an algorithm for computing the subprincipal symbol of this operator.

A general relativistic generalisation of Duistermaat-Guillemin-Gutzwiller trace formula has been initiated by 
Strohmaier and Zelditch~\cite{Strohmaier_AdvMath_2021, Strohmaier_IndagMath_2021}  
(see also the review~\cite{Strohmaier_RMP_2021}) 
who have studied the d'Alembertian on a globally hyperbolic spatially compact stationary spacetime. 
Their crucial step was to set up a relativistic description of the classical and the quantum dynamics and advance the celebrated 
Duistermaat-Guillemin~\cite{Duistermaat_Inventmath_1975} 
framework accordingly. 
In particular, they have expressed the time evolution operator by means of the causal propagator of d'Alembertian and Killing flow, and computed its principal symbol by utilising the symbolic calculus of Fourier integral operators based on 
Duistermaat and H\"{o}rmander's~\cite{Duistermaat_ActaMath_1972} 
classic work of distinguished parametrices.
Subsequently, they employed 
Guillemin's symplectic residue~\cite{Guillemin_AdvMath_1985} 
approach at trivial period and tailored the Duistermaat-Guillemin computation for the non-trivial periods. 
Consequently, the Weyl law in the space of lightlike geodesic strips has been reported by them using the standard Fourier-Tauberian argument.      
Apart from the current report, their work has been generalised in a bundle setting for a d'Alembertian on a globally hyperbolic stationary Kaluza-Klein spacetime by 
McCormick~\cite{McCormick} 
who utilised some technical results of this article. 
%
%
%
%
%
%
%
%
%
%
\section*{Acknowledgement}  
The author is indebted to Alexander Strohmaier for suggesting and supervising this problem as a part of his PhD project at the University of Leeds where the work was carried out and supported by the Leeds International Doctoral Studentship.  
It is a pleasure to thank Benjamin Sharp for commenting on the presentation of the manuscript and Yan-Long Fang and Christian B\"{a}r for fruitful discussions. 
I would like to acknowledge the anonymous referees for their scrutinisation and invaluable suggestions. 
%
%
%
%
%
%
%
%
%
%
\section{The setup} 
\label{sec: setup}
\subsection{Globally hyperbolic stationary spacetime}
\label{sec: GHSST}
Let $(M, \fg)$ be a $d \geq 2$ dimensional globally hyperbolic spacetime.
This means that this spacetime is isometrically diffeomorphic to the product manifold
\begin{equation}
	\spacetime \cong (\R \times \varSigma_{t}, \upbeta \, \rd t^{2} - \fh_{t}), 
\end{equation}
where $\upbeta \in C^{\infty} (\R \times \varSigma, \R_{>0})$ is the lapse function, $t \in C^{\infty} (\R \times \varSigma, \R)$ is the natural projection, each level set  $\varSigma_{t} := {x \in \sM | \boldsymbol{t} (x) = t}$ of a global Cauchy temporal function $\boldsymbol{t}$ is a Cauchy hypersurface, and $(\fh_{t})_{t}$ is a $1$-parameter family of Riemannian matrices on $\varSigma_{t}$ that varies smoothly with $t$. 

A stationary spacetime $\SST$, per se, is an oriented and time-oriented Lorentzian manifold $(\sM, \fg)$ admitting a global timelike Killing flow $\varXi : \R \times \sM \to \sM$. 
This flow is physically interpreted as the flow of time and it allows a canonical yet non-unique global (resp. local) splitting of the spacetime manifold $\sM$ (resp. metric $\fg$).  
Not all stationary spacetimes are globally hyperbolic and we restrict to only those  which are. 
The 
survey article~\cite{Sanchez_AMS_2011}  
and the original references cited therein is referred for different equivalent characterisations of global hyperbolicity and the necessary and sufficient conditions for a stationary spacetime to be globally hyperbolic.  

Note, on a generic $d > 2$-dimensional (standard) stationary spacetime the Killing vector field $(\partial_{t} =) Z$ is not orthogonal to $\varSigma_{t}$ because the corresponding $1$-form $Z^{\flat}$ does not satisfy the hypersurface-orthogonality condition: $Z^{\flat} \wedge \rd Z^{\flat} \neq 0$ (i.e., the orthogonal geometric distribution of $Z$ is non-involutive). 
Physically this means that the neighbouring orbits of $Z$ can twist around each other. 
In $d=2$ this cannot happen, i.e., every Killing vector field is at least locally hypersurface-orthogonal. 
If one imposes the condition that $\varSigma_{t}$ is orthogonal to the orbits of the spacetime isometry, then $\SST$ is called a static spacetime and one has a canonical non-unique global time-coordinate $t$. 
In this case the shift vector field $\upalpha$ vanishes identically so that there is no $\rd t \, \rd x^{i}$-type cross terms in~\eqref{eq: def_spacetime_metric_SSST} and the Riemannian metric $\fh$ becomes independent of time. 
Additionally, if we demand that $Z$ has a constant norm, then a static spacetime is called an ultrastatic spacetime. 
This enforces the lapse function $\upbeta$ to be the identity function. 
%
%
%
%
%
%
%
%
%
%
\subsection{Dirac-type operators}
\label{sec: Dirac_op}
Suppose that $\big( \sE \to \sM, (\cdot|\cdot) \big)$ is a vector bundle over a spacetime $(\sM, \fg)$, endowed with a sesquilinear form $(\cdot|\cdot)$ and that $D$ is a Dirac-type operator on $\sE$, symmetric with respect to $(\cdot|\cdot)$.  
Therefore, $D^{2}$ is a normally hyperbolic operator and by the polarisation identity 
(see e.g.~\cite[Rem. 2.19]{Baer_Springer_2012})  
\begin{equation} \label{eq: anticommutation_symbol_Dirac_type}
	[\symb{D} (x, \xi), \symb{D} (x, \eta)]_{+} = 2 \, \fg_{x}^{-1} (\xi, \eta) \, \one_{\End{\sE}_{x}}, \quad \forall \xxi, (x, \eta) \in C^{\infty} (\sM; \coTansM). 
\end{equation}
Moreover, $\symb{D}$ defines a Clifford action of $\coTansM$ on $\sEsM$ by 
\begin{equation} \label{eq: def_symbol_Dirac_type_op}
	\symb{D} (\rd f) := \ri \, [D, f]_{-}, 
	\quad 
	\forall f \in C^{\infty} (\sM), 
\end{equation}
which turns $\big( \sE \to \sM, (\cdot|\cdot) \big)$ into a bundle of Clifford modules $\big( \sE \to \sM, (\cdot|\cdot), \symb{D} \big)$ over $\spacetime$. 
Furthermore,~\eqref{eq: def_symbol_Dirac_type_op} gives the Leibniz rule for $D$ and the Clifford mapping~\eqref{eq: anticommutation_symbol_Dirac_type} defines the (pointwise) Clifford multiplication  
\begin{equation} \label{eq: def_Clifford_multiplication}
	\fc \big( \xxi \otimes u \big) := \symb{D} \xxi \, (u), \quad \forall \xxi \in C^{\infty} (\sM; \coTanM), \forall u \in \secsME. 
\end{equation}
By the Weitzenb\"{o}ck formula 
(see e.g.~\cite[Prop. 3.1]{Baum_AGAG_1996}),   
given a Dirac-type operator $D$ on any vector bundle $\big( \sE \to \sM, (\cdot|\cdot) \big)$, there exists a unique $(\cdot|\cdot)$-compatible connection $\nabla$ on $\sE$, called the Weitzenb\"{o}ck connection, and a unique potential $V \in C^{\infty} \big( \sM; \End{\sE} \big)$ such that 
\begin{equation} \label{eq: Weitzenboeck_formula_Dirac_type}
	D^{2} = -\tr_{\fg} \big( (\nabla^{\mathrm{LC}} \otimes \one_{\sE} + \one_{\coTanM} \otimes \nabla) \circ \nabla \big) + V,    
\end{equation}
where $\tr_{\fg} : C^{\infty} (\sM; \coTansM \otimes \coTansM) \to C^{\infty} (\sM)$ denotes the metric trace: $\tr_{\fg} \big( \xxi \otimes (x, \eta) \big) := \fg_{x}^{-1} (\xi, \eta)$ and $\nabla^{\mathrm{LC}}$ is the Levi-Civita connection on $\sM$. 
The most general Dirac-type operator on a Clifford module $\big( \sE \to \sM, (\cdot|\cdot), \symb{D} \big)$ then has the form 
\begin{equation} \label{eq: Dirac_op}
	D = - \ri \, \fc \circ \nabla + U,    
\end{equation}
where the potential term $U \in C^{\infty} \big( \sM; \End{\sE} \big)$ is defined by the hindmost equation. 
The preceding assumption entails that $U$ and $- \ri \nabla_{X}$ are symmetric with respect to $(\cdot|\cdot)$ provided that $X$ is divergence free. 

The Weitzenb\"{o}ck connection $\nabla$ induces a connection $\nabla^{\Hom{\sE, \sE}}$ on the homomorphism bundle $\Hom{\sE, \sE} \to \sM$ over the spacetime, which is not, in general, a Clifford connection as can be seen readily by taking covariant derivative of~\eqref{eq: anticommutation_symbol_Dirac_type} with respect to this connection and properties of the Levi-Civita covariant derivative. 
In particular, locally one can always choose an orthonormal frame $\{ e_{i} \}$ and its coframe $\{ \varepsilon^{i} \}$ on $\sM$, and express $D = - \ri \, \slashed{\nabla} + U$, where we have use the Feynman-slash notation: $\slashed{\nabla} := \symb{D} (\varepsilon^{i}) \nabla_{e_{i}}$, i.e., $\nabla$ is composed with Clifford multiplication and then traced over. 
A bit lengthy yet straightforward computation yields 
\begin{eqnarray}
	D^{2} 
	& = &  
	-\tr_{\fg} \big( (\nabla^{\ms \mathrm{LC}} \otimes \one_{\sE} + \one_{\coTanM} \otimes \nabla) \circ \nabla \big) - \frac{1}{2} \slashed{\sR} + U^{2} - \ri \slashed{\nabla} (U)  
	\nonumber \\ 
	&& 
	- \Big( \slashed{\nabla}^{\ms \Hom{\sE, \sE}} \big( \symb{D} (\varepsilon^{j}) \big) + \slashed{\Gamma}^{j} + \ri U \symb{D} (\varepsilon^{j}) + \ri \symb{D} (\varepsilon^{j}) U \Big) \nabla_{e_{j}},  
\end{eqnarray}
where $\sR$, $\Gamma$ and $- \slashed{\sR} / 2 := - \symb{D} (X^{\flat}) \, \symb{D} (Y^{\flat}) \, \sR_{X, Y} / 2$ are the curvature, connection $1$-form and  Weitzenb\"{o}ck curvature of $\nabla$, respectively.  
Since $\nabla$ is a Weitzenb\"{o}ck connection, comparing with~\eqref{eq: Weitzenboeck_formula_Dirac_type} we equate the coefficients of $\nabla_{e_{j}}$ in the foregoing equation of $D^{2}$ to zero to obtain the covariant derivative of $\symb{D}$:   
\begin{equation} \label{eq: covariant_derivative_symbol_Dirac} 
	\slashed{\nabla}^{\ms \Hom{\sE, \sE}} \big( \symb{D} (\varepsilon^{j}) \big) = - \slashed{\Gamma} - \big[ U, \symb{D} (\varepsilon^{j}) \big]_{+}, 
	\quad 
	\forall j = 1, \ldots, d. 
\end{equation}

%
%
%
\begin{remark} \label{rem: Clifford_connection} 
	A Clifford connection $\tilde{\nabla}^{\ms \Hom{\sE, \sE}}$, characterised by 
	\begin{equation}
		\big[ \tilde{\nabla}_{X}^{\ms \Hom{\sE, \sE}}, \symb{D} (x, \eta) \big]_{-} = \symb{D} \big(x, \nabla_{X}^{\ms \mathrm{LC}} \eta \big), 
		\quad 
		\forall (x, X) \in C^{\infty} (\sM; \tangent \sM), \forall (x, \eta) \in C^{\infty} (\sM; \coTansM),  
	\end{equation}
	always exists and then the Clifford module bundle $\big( \sE \to \sM, (\cdot|\cdot), \symb{D}, \tilde{\nabla} \big)$ is called the Dirac module bundle where the most general Dirac-type operator has the form 
	\begin{equation}
		\tilde{D} := - \ri \, \fc \circ \tilde{\nabla} + \tilde{U}
	\end{equation}
	for all $\tilde{U} \in C^{\infty} (\sM; \End{\sE})$ such that (cf.~\eqref{eq: covariant_derivative_symbol_Dirac}) 
	\begin{equation} \label{eq: potential_compatible_Dirac}
		[\tilde{U}, \symb{D} (\cdot)]_{-} = 0. 
	\end{equation}
	The operator $\tilde{D}$ has the same principal symbol as $D$ so that they differ only by a smooth term. 
	Since $\tilde{\nabla}$ is compatible with the Clifford multiplication~\eqref{eq: def_Clifford_multiplication}, $\tilde{D}$ is sometimes called compatible Dirac-type operator. 

	Note, the spin connections induced by the Levi-Civita connection are example of a Clifford connection and the corresponding massive 
	spin-Dirac (see e.g.~\cite[Exm. 2.21]{Baer_Springer_2012}) 
	and 
	twisted spin-Dirac operators (see e.g.~\cite[Exm. 2.22]{Baer_Springer_2012}) 
	are indeed compatible Dirac-type operators where $\tilde{U}$ models the mass (in the appropriate limit) term. 
\end{remark}
%
%
%

The condition~\eqref{eq: covariant_derivative_symbol_Dirac} and the Remark~\ref{rem: Clifford_connection} are originally due to 
Branson and Gilkey~\cite[Lem. 2.3]{Branson_JFA_1992} 
who have worked on Riemannian setting and considered $\tilde{U} \equiv 0$. 
%
%
%
%
%
%
%
%
%
%
\subsection{Lie derivative on $\secsME$}
\label{sec: Lie_derivative}
On a stationary spacetime $\SST$, the cotangent lift $\coTan \varXi_{s}$ of the spacetime isometry $\varXi_{s}$ naturally induces a $\R$ action on $\coTansM$: $\coTan \varXi (s; y, \eta) := (\coTan_{x} \varXi_{s}) \eta$. 

Furthermore, if $\hat{\varXi}$ denotes the fibrewise canonical isomorphism $(\varXi^{*} \sE)_{(s, x)} \cong \sE_{\varXi_{s} (x)}$ then the pullback $\varXi^{*} : \comSecE \to C^{\infty} (\R \times \sM; \varXi^{*} \sE)$ via the morphism $(\varXi, \hat{\varXi})$ is a Fourier integral operator whose Schwartz kernel is given by 
(see e.g.~\cite[(2.4.4), (2.4.22)]{Duistermaat_Birkhaeuser_2011})    
\begin{subequations} \label{eq: pullback_kernel}
	\begin{eqnarray}
		&& 
		\mathsf{\Xi} \in I^{-1/4} \big( \R \times \sM \times \sM, \varGamma'; \Hom{\sE, \varXi^{*} \sE} \big), 
		\label{eq: pullback_kernel_Lagrangian_dist}
		\\ 
		&& 
		\varGamma := \big\{ \big( s, \tau; x, \xi; y, - \eta \big) \in \dotCoTan \R \times \dotCoTansM \times \dotCoTansM \,|\, \tau = - \xi (Z), \xxi = (\coTan_{x} \varXi_{s}) \yeta \big\}, \qquad \quad 
		\label{eq: def_canonical_relation_pullback}
		\\ 
		&& 
		\symb{\mathsf{\Xi}} := (2 \pi)^{1/4} \one_{\Hom{\sE, \varXi^{*} \sE}} \sqrt{|\dVol_{\ms \varGamma}|} \otimes \bbl,  
		\label{eq: symbol_pullback}
	\end{eqnarray}
\end{subequations}
where $\dVol_{\ms \varGamma}$ is the volume form on the homogeneous canonical relation $\varGamma \subset \dotCoTan (\R \times \sM) \times \dotCoTansM$ and $\bbl$ is a section of the Keller-Maslov bundle $\bbL_{\varGamma} \to \varGamma$ over $\varGamma$, constructed as below. 
One observes that $\varGamma$ is the graph of $\coTan \varXi_{s}$ and at $s=0$, $\varGamma$ is essentially the conormal bundle $\varGamma_{0} := \{ (0, \tau) \} \times (\varDelta \, \dotCoTanM)'$. 
Since $\dotCoTansM$ is a symplectic manifold, it admits the global volume induced by the canonical symplectic form on $\dotCoTansM$. 
Then $\dVol_{\ms \varGamma_{0}}$ is obtained via the pullback of the projector $\Pr : \varGamma_{0} \to \R \times \dotCoTansM$, which is invariant under the flow $\varPsi_{s}$ of the Hamiltonian vector field generated by the extended Hamiltonian $\tau + \xi (Z)$ and given by $\dVol_{\ms \varGamma} = \rd s \otimes \rd x \wedge \rd \xi$ in the parametrisation~\eqref{eq: def_canonical_relation_pullback}. 
To construct $\bbL_{\varGamma}$ we re-use the fact that $\varPsi_{s}$ sweeps out $\{ (0, \tau) \} \times (\varDelta \, \dotCoTanM)'$ to $\varGamma$ and hence $\bbL_{\varGamma}$ is constructed by parallelly transporting the sections of $\bbL_{0}$ along the orbits of $\varPsi_{s}$ where $\bbL_{0}$ the Keller-Maslov bundle over $\varGamma_{0}$ consists of global constant section   
(see e.g.~\cite[Sec. 4]{Meinrenken_ReptMathPhys_1992},~\cite[Sec. 5.13]{Guillemin_InternationalP_2013}). 

More generally, $\varXi^{*}$ can be extended to a sequentially continuous linear map on $I^{m} (\sM, \mathcal{L}; \sE)$ 
by
\begin{equation} \label{eq: pullback_Killing_flow_Lagrangian_dist}
	\varXi^{*} : I^{m} (\sM, \mathcal{L}; \sE) \to I^{m - 1/4} (\R \times \sM, \varGamma' \circ \mathcal{L}; \varXi^{*} \sE), 
	\qquad 
	\symb{\varXi^{*} u} \equiv \symb{\mathsf{\Xi}} \diamond \symb{u} 
\end{equation}
for any $u \in I^{m} (\sM, \mathcal{L}; \sE)$. 
The composition $\diamond$ of principal symbols is presented in details in 
Appendix~\ref{sec: density_canonical_relation} and the equation of $\symb{\varXi^{*} u}$ is in the sense of modulo Keller-Maslov part.  

The induced Killing flow $\varXi_{s}^{*}$ paves the way to define the Lie derivative $\pounds_{\! Z}$ on $E$ with respect to the Killing vector field $Z$: 
\begin{equation} \label{eq: def_Lie_deri}
	\pounds_{\! Z} u := \frac{\rd}{\rd s} \Big|_{s=0} \varXi_{s}^{*} u.  
\end{equation}
Note, $\pounds_{\! Z}$ is essentially a generalisation of the 
Lichnerowicz spinor Lie derivative 
for stationary spacetimes when one does not necessarily have a spin-structure. 
%
%
%
%
%
%
%
%
%
%
\subsection{Classical dynamics}
\label{sec: classical_dyanmics}
The primary tenet of the semiclassical analysis is to connect the relativistic trace formula with its classical dynamics. 
In non-relativistic mechanics, the cotangent bundle $\coTanCauchy$ models the classical phase space, whereas the Hilbert-space quantum dynamics takes place in $L^{2} (\varSigma)$. 
The naive expectation of using this pair or the pair $\big( \coTansM, L^{2} (\sM; \sE) \big)$ does not work because the former depends on the choice of Cauchy hypersurface $\varSigma \subset \sM$ and for the latter pair, the (Killing flow invariant) sesquilinear form $(\cdot|\cdot)$ on $\sE$ (Assumption~\ref{asp: trace_formula}~\ref{asp: Direc_op_symmetric_SST}) does not induce any $L^{2}$-norm. 
One can, of course, choose an arbitrary hermitian form in order to have a $L^{2}$ space on $\sE$, but then this $L^{2}$-space will depend on the particular choice of the hermitian form as $\sM$ is non-compact. 
In pursuance of defining the correct classical dynamics, one notes that $\Char D$ is the lightcone bundle $\coLightBun$. 
Since $\Char{D}$ and $\Char{\square}$ are identical, the classical dynamics in this case coincides with  that in the 
Strohmaier-Zelditch trace formula~\cite{Strohmaier_AdvMath_2021} 
and hence we adopt their formulae.   
The metric-Hamiltonian $H_{\fg}$ is a homogeneous function of degree $2$ in the cotangent fibres. 
Referring to this as the dilation, let $\E$ be the Euler vector field which is the generator of this action. 
On $\coLightBun$, clearly $H_{\fg}$ vanishes and we have $[\E, X_{\fg/2}]_{-} = X_{\fg/2}$, where $X_{\fg/2}$ is the Hamiltonian vector field of $H_{\fg}$. 
Then the Hamiltonian reduction of $\coLightBun$ is the space of scaled-lightlike geodesic strips $\cN$. 
That is, if $\spacetime$ is geodesically complete then $\cN$ is the quotient of $\coLightBun$ by the $\R$-group action generated by $X_{\fg/2}$. 
Similarly, by taking the quotient of $\cN$ by the $\R_{+}$-group action generated by $\E$ we obtain the space of unparametrised-lightlike geodesic strips $\tilde{\cN}$. 
If $\spacetime$ is a spatially compact globally hyperbolic spacetime then $\tilde{\cN}$ is a \textit{conic compact contact manifold} whose symplectisation is the conic symplectic manifold $\cN$ induced from the conic contact manifold $\coLightBun$~\cite[pp. 10-12]{Penrose_1972},~\cite[Thm. 2.1]{Khesin_AdvMath_2009}. 

We remark that $\cN$ is defined \textit{invariantly} and~\cite[Prop. 2.1]{Strohmaier_AdvMath_2021}
\begin{equation} \label{eq: symplectic_diffeo_lightlike_geodesic_coTanCauchy}
	\varsigma: \cN \to \dotCoTanCauchy 
\end{equation}
is a homogeneous symplectomorphism. 
Furthermore, the geodesic flow on any spatially compact globally hyperbolic spacetime $\spacetime$ does not necessarily have to be complete. 

Recall that on a standard stationary spacetime, a geodesic $c : \R \to \R \times \varSigma$ can be expressed as $c (s) = \big( t (s), c_{\ms \varSigma} (s) \big)$ and then a periodic one with period $T$ means  
\begin{equation}
	c_{\ms \varSigma} (s + T) = c_{\ms \varSigma} (s), 
	\quad 
	\forall s \in \R.  
\end{equation}
The cotangent lift $\gamma$ of such periodic curves $c$ on $\cN$ is vital for the Duistermaat-Guillemin-Gutzwiller trace formula. 
As evident, these are periodic lightlike geodesic strips $\gamma$ of length $T$. 
We note that the set of $T$-periodic curves $\gamma$ on $\cN$ is precisely the length spectrum of $(\varSigma, \fh)$ for an ultrastatic spacetime. 
For details, see, for 
instance~\cite{Sanchez_AMS_1999, Bartolo_NonlinearAnal_2001} 
and the earlier references cited therein. 
%
%
%
%
%
%
%
%
%
%
\subsection{Green's operators}
Since $D^{2}$ is a normally hyperbolic operator, it admits unique advanced $F^{\adv}$ and retarded $F^{\ret}$ Green's operators on a globally hyperbolic spacetime $\spacetime$ 
(see e.g.~\cite[Cor. 3.4.3]{Baer_EMS_2007}). 
Therefore, there exist unique advanced $\advGreenOp := D F^{\adv}$ and retarded $\retGreenOp := D F^{\ret}$ Green's operators for $D$ on $\spacetime$~\cite[Thm. 1]{Muehlhoff_JMP_2011}.   
Their antisymmetric combination 
\begin{equation} \label{eq: def_causal_propagator_Dirac}
	G := \retGreenOp - \advGreenOp : \comSecsME \to C_{\mathrm{sc}}^{\infty} (\sM; \sE)
\end{equation}
defines the Pauli-Jordan-Lichnerowicz operator, also known as the \textit{causal propagator}. 

We have assumed that $D$ is symmetric with respect to the sesquilinear form $(\cdot|\cdot)$ on $\sE$. 
Hence, it follows that 
\begin{equation} \label{eq: adv_ret_Green_op_Dirac_type_selfadjoint_sesquilinear}
	(G^{\adv, \ret} u | v) = (u | G^{\ret, \adv} v), 
	\quad 
	(G u | v) = - (u | G v), 
	\quad 
	\forall u, v \in \comSecsME,   
\end{equation}
due to the fact that $(F^{\adv, \ret} u | v) = (u | F^{\ret, \adv} v)$ 
(see e.g.~\cite[Lem. 3.4.4]{Baer_EMS_2007}). 

It is well-known that the causal propagator $F := F^{\ret} - F^{\adv}$ for $D^{2}$ is a Fourier integral operator, i.e., its Schwartz kernel $\fF$ is a Lagrangian distribution  
(see e.g.~\cite[Rem. 2.15]{Islam}): 
\begin{subequations}
	\begin{eqnarray}
		&& 
		\fF \in I^{-3/2} \big( \sM \times \sM, C'; \Hom{\sE, \sE} \big), 
		\\ 
		&& 
		\symb{\fF} = \frac{\ri}{2}\sqrt{2 \pi} w \sqrt{|\dVol_{\ms C}|} \otimes \mathbbm{l}, 
		\label{eq: symbol_causal_propagator_NHOp}
		\\ 
		&& 
		\Char{\fF} \cap C = \emptyset.   
	\end{eqnarray}
\end{subequations} 
Here $\dVol_{\ms C}$ is the natural density on the geodesic relation 
\begin{equation}
	C := \big\{ \xxiyeta \in \coLightBun \times \coLightBun \,|\, \exists ! s \in \R : \xxi = \varPhi_{s} \yeta \big\},     
	\label{eq: def_geodesic_relation} 
\end{equation}
where $\varPhi$ is the geodesic flow on the cotangent bundle $\coTanM$ restricted to the lightcone bundle $\coLightBun$. 
The construction of $\dVol_{\ms C}$ is originally due to 
Duistermaat-H\"{o}rmander~\cite[p. 230]{Duistermaat_ActaMath_1972} 
for a generic manifold which simplifies considerably for a globally hyperbolic spacetime $\spacetime$ as reported by 
Strohmaier-Zelditch~\cite[$(52)$, Rem. 7.1]{Strohmaier_AdvMath_2021}. 
By definition, for each $\xxiyeta \in C$ there is a unique $s \in \R$ such that $\xxi = \varPhi_{s} \yeta$ so that $C$ can be identified with an open subset of $\R \times \coLightBun$, where $s \in \R$ is the flow parameter. 
Recall that the Hamiltonian $H_{\fg}$-reduction of $\coLightBun$ is the conic symplectic manifold $\cN$ of scaled-lightlike geodesic strips~\cite[pp. 10-12]{Penrose_1972},~\cite[Thm. 2.1]{Khesin_AdvMath_2009}.  
Denoting by $\tilde{s}$, the dilation parameter on $\coLightBun$, the natural half-density on $C$ is given by 
\begin{equation}
	\sqrt{|\dVol_{\ms C}|} := \sqrt{|\rd s|} \otimes \sqrt{|\rd \tilde{s}|} \otimes \sqrt{| \dVol_{\ms \cN}|}. 
\end{equation}  
Note, \textit{this density differs from that by Duistermaat-H\"{o}rmander by a factor of $2$ because they used the Hamiltonian flow of $\fg^{-1}$ to parametrise $\cN$, in contrast to the flow of the Hamiltonian vector field $X_{\fg/2}$ generated by} $H_{\fg}$. 
Moreover, 
\begin{equation}
	\pounds_{X_{\fg}} \dVol_{\ms C} = 0. 
\end{equation}
The densities on forward (backward) geodesic relations 
\begin{equation}
	C^{\pm} := \big\{ \xxiyeta \in \coLightBun \times \coLightBun \,|\, \exists ! s \in \R_{\gtrless} : \xxi = \varPhi_{s} \yeta \big\} 
\end{equation}
follow from the fact that $\coLightBun = \dotCoTan_{0, +} \sM \sqcup \dotCoTan_{0, -} \sM$ in $d \geq 3$. 
When $d = 2$ then $\coLightBun$ has $4$ connected components and those can be incorporated similarly. 
In~\eqref{eq: symbol_causal_propagator_NHOp}, $\bbl$ is a section of the Keller-Maslov bundle $\bbL_{C} \to C$, and $w$ is the unique element of $C^{\infty} \big( C; (\pi^{*} \Hom{\sE, \sE}) |_{C} \big)$ that is diagonally the identity endomorphism and off-diagonally covariantly constant  
\begin{equation} \label{eq: symbol_causal_propagator_NHOp_covarinatly_constant}
	\connectionEndPiE_{X_{\fg/2}} w = 0   
\end{equation}
with respect to the $D^{2}$-compatible Weitzenb\"{o}ck covariant derivative $\connectionEndPiE_{X_{\fg/2}}$ (Definition~\ref{def: P_compatible_connection}) along the geodesic vector field $X_{\fg/2}$. 
%
%
%
\begin{remark} \label{rem: HVF_canonical_relation}
	By Definition~\ref{def: P_compatible_connection}, $X_{\fg}$ acts on $C^{\infty} \big( \coTansM; \pi^{*} \Hom{\sE, \sE} \big)$. 
	Thus~\eqref{eq: symbol_causal_propagator_NHOp_covarinatly_constant} 
	must be interpreted in the sense of the induced Hamiltonian vector field on $C$ by the vector field $(X_{\fg}, 0)$ on $(\coTansM \times \coTansM)$~\cite[Rem. 1, p. 216]{Duistermaat_ActaMath_1972} 
	(see also~\cite[Rem. 1, p. 69]{Hoermander_Springer_2009}). 
\end{remark}
%
%
%
By construction, $\fG = D \fF$. 
Then the preceding information entail that $G$ is a Fourier integral operator associated with the canonical relation $C$.  
%
%
%
\begin{lemma} \label{lem: causal_propagator_Dirac}
	Let $\big( \sE \to \sM, \symb{D} \big)$ resp. $\pi : \dotCoTansM \to \sM$ be a bundle of Clifford modules resp. the punctured cotangent bundle over a globally hyperbolic spacetime $\spacetime$ and $D$ a Dirac-type operator on $\sE$ whose principal symbol is denoted by $\symb{D}$. 
	The Schwartz kernel $\fG$ of the causal propagator of $D$ is then  
	\begin{subequations} \label{eq: causal_propagator_Dirac}
		\begin{eqnarray}
			&& 
			\fG \in I^{-1/2} \big( \sM \times \sM, C'; \Hom{\sE, \sE} \big), 
			\label{eq: causal_propagator_Dirac_FIO}
			\\  
			&& 
			\symb{\fG} = \frac{\ri}{2} \sqrt{2 \pi} \, \symb{D} \circ w \, \sqrt{| \dVol_{\ms C} |} \otimes \mathbbm{l},   
			\label{eq: symbol_causal_propagator_Dirac}
		\end{eqnarray}
	\end{subequations}
	where $\dVol_{\ms C}$ is the natural volume form on the geodesic relation $C$, $\bbl$ is a section of the Keller-Maslov bundle $\bbL_{C} \to C$ 
	(as constructed in~\cite[pp. 231-232]{Duistermaat_ActaMath_1972}), and $w$ is the unique element of $C^{\infty} \big( C; (\pi^{*} \Hom{\sE, \sE}) |_{C} \big)$ that is diagonally the identity endomorphism and off-diagonally covariantly constant  
	\begin{equation} \label{eq: symbol_causal_propagator_Dirac_covarinatly_constant}
		\connectionEndPiE_{X_{\fg/2}} w = 0   
	\end{equation}
	with respect to the $D^{2}$-compatible Weitzenb\"{o}ck covariant derivative (Definition~\ref{def: P_compatible_connection}) $\connectionEndPiE_{X_{\fg/2}}$ along the geodesic vector field $X_{\fg/2}$  (Remark~\ref{rem: HVF_canonical_relation}). 
	\newline 

	If $\big( \sE \to \sM, \symb{D}, \tilde{\nabla} \big)$ is a Dirac module bundle with the corresponding Dirac-type operator $\tilde{D}$ then~\eqref{eq: symbol_causal_propagator_Dirac_covarinatly_constant} holds with the replacement of $\connectionEndPiE$ by the $\tilde{D}^{2}$-compatible connection $\tilde{\nabla}^{\ms \pi^{*} \Hom{\sE, \sE}}$ induced by the Clifford connection $\tilde{\nabla}$.        
\end{lemma}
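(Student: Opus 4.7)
The plan is to exploit the relation $\fG = D \fF$, which follows directly from $G = \retGreenOp - \advGreenOp = D(F^{\ret} - F^{\adv}) = DF$, and then bootstrap from the already-recalled description of the causal propagator $F$ of the normally hyperbolic operator $D^{2}$. Since $\fF \in I^{-3/2}(\sM \times \sM, C'; \Hom{\sE,\sE})$ with principal symbol $\tfrac{\ri}{2}\sqrt{2\pi}\, w \sqrt{|\dVol_{\ms C}|} \otimes \bbl$, the problem reduces to tracking the effect of applying the first-order differential operator $D$ on the left factor of $\fF$.

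First I would invoke the standard symbolic calculus for the action of a pseudodifferential operator on a Lagrangian distribution: when a differential operator $P$ of order $m$ acts on an element of $I^{k}(\cdot, \Lambda; \cdot)$, the result lies in $I^{k+m}(\cdot, \Lambda; \cdot)$ and its principal symbol is obtained by pointwise composition with $\symb{P}$ restricted to $\Lambda$, leaving the half-density factor $\sqrt{|\dVol_{\ms \Lambda}|}$ and the Keller--Maslov section $\bbl$ unchanged at leading order. Since the canonical relation $C$ sits inside $\coLightBun \times \coLightBun \subset \Char{D} \times \Char{D}$, no cancellation between $\symb{D}$ and the symbol of $\fF$ occurs, and $\symb{D} \circ w$ makes pointwise sense as an endomorphism-valued function on $C$. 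Carrying this out immediately yields $\fG \in I^{-1/2}(\sM \times \sM, C'; \Hom{\sE,\sE})$ with the claimed principal symbol $\tfrac{\ri}{2}\sqrt{2\pi}\,(\symb{D} \circ w) \sqrt{|\dVol_{\ms C}|} \otimes \bbl$.

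The covariant-constancy assertion requires no fresh computation: the $w$ appearing in $\symb{\fG}$ is literally the one from $\symb{\fF}$, so the transport equation $\connectionEndPiE_{X_{\fg/2}} w = 0$ with respect to the $D^{2}$-compatible Weitzenb\"{o}ck derivative (Definition~\ref{def: P_compatible_connection}) holds verbatim, as does the identity-on-diagonal property. For the second assertion, on a Dirac module bundle the Clifford connection $\tilde{\nabla}$ satisfies the compatibility identity~\eqref{eq: potential_compatible_Dirac}, which combined with the Weitzenb\"{o}ck formula~\eqref{eq: Weitzenboeck_formula_Dirac_type} applied to $\tilde{D}$ identifies the $\tilde{D}^{2}$-compatible connection on $\pi^{*}\Hom{\sE,\sE}$ in the sense of Definition~\ref{def: P_compatible_connection} with the one induced by $\tilde{\nabla}$; the transport equation for $w$ therefore transfers unchanged to the Clifford-connection formulation.

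The hard part will be verifying carefully that the symbolic-calculus rule does go through cleanly in the bundle-with-half-densities setting, so that applying $D$ produces no extra contribution to the leading symbol from derivatives of $\sqrt{|\dVol_{\ms C}|}$ or of $\bbl$. A secondary annoyance is keeping conventions straight: the Hamiltonian flow of $X_{\fg/2}$ rather than $X_{\fg}$ is used throughout (the factor of $2$ already flagged in the discussion of $\dVol_{\ms C}$ preceding the lemma), and this must be handled consistently when transferring the transport equation from $\fF$ to $\fG$.
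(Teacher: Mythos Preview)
Your proposal is correct and mirrors the paper's own argument: the paper does not give a separate proof block for this lemma but instead records, just before the statement, that $\fG = D\fF$ by construction and that $\fF \in I^{-3/2}$ with the stated principal symbol and transport equation, then declares that ``the preceding information entail that $G$ is a Fourier integral operator associated with the canonical relation $C$.'' Your use of the standard symbolic calculus for a first-order differential operator acting on a Lagrangian distribution, together with carrying over the transport equation for $w$ unchanged, is exactly this reasoning made explicit.
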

%
%
%
%
%
%
%
%
%
%
\subsection{Cauchy problem}
\label{sec: Cauchy_problem_Dirac}
Since $D^{2}$ is a normally hyperbolic operator and $\spacetime$ is a globally hyperbolic spacetime, the Cauchy problem for $D$ is well-posed~\cite[Thm. 2]{Muehlhoff_JMP_2011}. 
In other words, for an arbitrary but fixed Cauchy hypersurface $\varSigma_{t} \subset \sM$, the mapping  
\begin{equation} \label{eq: sol_Dirac_Cauchy_data}
	\cR_{t} : \ker D \to C_{\mathrm{c}}^{\infty} (\varSigma_{t}; \sE_{\varSigma_{t}}), ~ u \mapsto \cR_{t} (u) := u|_{\varSigma_{t}}, 
	\quad \supp{u} \subset J \big( \supp (u|_{\varSigma_{t}}) \big)  
\end{equation}
is a homeomorphism, where $J$ is as defined in Section~\ref{sec: convention}. 
Employing this topological isomorphism and Assumption~\ref{asp: trace_formula}~\ref{asp: hermitian_form_Dirac_op_SST}, $\ker D$ can be equipped with a (positive definite) hermitian inner product  
(see e.g.~\cite[Lem. 3.17]{Baer_Springer_2012}):   
\begin{equation} \label{eq: def_hermitian_form_Dirac_type_op}
	\langle u | v \rangle := \int_{\varSigma} \Big( \fc \big( (x, \zeta) \otimes u|_{\varSigma} \big) \big| v |_{\varSigma} \Big)_{x} \dVolh (x)
\end{equation}
where $\dVolh$ is the Riemannian volume element on $(\varSigma, \fh)$ and $(\cdot, \zeta)$ as in Assumption~\ref{asp: trace_formula}~\ref{asp: hermitian_form_Dirac_op_SST}.  
By global hyperbolicity (Section~\ref{sec: GHSST}) of $\spacetime$, there exists a (non-unique) global Cauchy temporal function $\boldsymbol{t}$ such that each Cauchy hypersurface $\varSigma_{t} := \boldsymbol{t}^{-1} (t)$ for any $t \in \R$ is a level set of $\boldsymbol{t}$. 
Then $\zeta := \rd \boldsymbol{t} / \| \rd \boldsymbol{t} \|$ is a unit normal covector field on $\sM$ along $\varSigma_{t}$ and we choose the time-orientation employing the global timelike (Killing) vector field $Z$ such that $\zeta$ is future-directed. 
On a static spacetime, we can choose the Killing covector field $(\partial_{t})^{\flat}$ (up to normalisation) for $\zeta$.   

We remark that $\scalarProdTwo{u}{v}$ is independent of chosen Cauchy hypersurface $\varSigma$ due to the Green-Stokes formula.  
Thus, $(\sE \to \sM, \langle \cdot|\cdot \rangle)$ is a hermitian vector bundle where the hermitian form $\langle \cdot|\cdot \rangle$, of course depends on $(\cdot, \zeta)$ but is independent of chosen Cauchy hypersurface.  

The preceding equation entails that, given an initial data $k \in C_{\mathrm{c}}^{\infty} (\varSigma; \sE_{\varSigma})$, any smooth solution $u$ of the Dirac equation can be expressed as 
\begin{equation} \label{eq: smooth_sol_Dirac_op_Cauchy_data}
	(u|v) = \big( - \ri \, G \circ (\iota_{\ms \varSigma}^{*})^{-1} \, \fc (\zeta  \otimes k) \big| v \big), 
\end{equation} 
where $\iota_{\ms \varSigma}^{*} : \comSecsME \to C_{\mathrm{c}}^{\infty} (\varSigma; \sE_{\varSigma})$ is the restriction operator (discussed elaborately in Appendix~\ref{sec: restriction_op}). 
All the maps in the exact complex  
\begin{equation} \label{eq: exact_sequence_causal_propagator_Dirac}
	0 
	\to \comSecsME
	\xrightarrow{~ D ~} \comSecsME 
	\xrightarrow{~ G ~} C_{\mathrm{sc}}^{\infty} (\sM; \sE)  
	\xrightarrow{~ D ~} C_{\mathrm{sc}}^{\infty} (\sM; \sE) 
\end{equation}
are sequentially continuous as a consequence of $D$ being a local operator and the following exact complex being sequentially continuous  
(see e.g.~\cite[Prop. 3.4.8]{Baer_EMS_2007})  
\begin{equation}
	0 
	\to \comSecsME
	\xrightarrow{~ D^{2} ~} \comSecsME 
	\xrightarrow{~ F ~} C_{\mathrm{sc}}^{\infty} (\sM; \sE)  
	\xrightarrow{~ D^{2} ~} C_{\mathrm{sc}}^{\infty} (\sM; \sE)   
\end{equation}
where $F$ resp. $G$ are causal propagators for $D^{2}$ resp. $D$. 
%
%
%
%
%
%
%
%
%
%
\subsection{Cauchy evolution map}
This mapping is defined by 
\begin{equation} \label{eq: def_Cauchy_evolution_op}
	U_{t', t} := \cR_{t} \circ (\cR_{t'})^{-1} : C_{\mathrm{c}}^{\infty} \big( \varSigma_{t'}; \sE_{\varSigma_{t'}} \big) \to C_{\mathrm{c}}^{\infty} \big( \varSigma_{t}; \sE_{\varSigma_{t}} \big),     
\end{equation} 
which is a homeomorphism and extends to a unitary operator (denoted by the same symbol) on the space of square integrable sections on Cauchy hypersurfaces, i.e.,  
\begin{equation}
	U_{t', t} : L^{2} \big( \varSigma_{t'}; \sE_{\varSigma_{t'}} \big) \to L^{2} (\varSigma_{t}; \sE_{\varSigma_{t}})
\end{equation}
is an isometry.  
%
%
%
%
%
%
%
%
%
%
\section{Trace formula} 
\label{sec: trace_Cauchy_evolution_op}
In this section we will work on the set-up in Section~\ref{sec: result}. 
That is, $\varSigma \subset \sM$ is an embedded submanifold and $\iota_{\ms \varSigma} : \varSigma \hookrightarrow \sM$ is proper. 
Consequently, $\iota_{\ms \varSigma}^{*} : \comSecsME \to C_{\mathrm{c}}^{\infty} (\varSigma; \sE_{\varSigma}) = \secECauchy$. 
%
%
%
\begin{theorem} \label{thm: trace_Cauchy_evolution_op}
	As in the terminologies of Theorem~\ref{thm: trace_formula_t_zero}, let $\fG$ be the Schwartz kernel of the causal propagator for $D$. 
	The smoothed-out time evolution operator $U_{\rho}$ given by~\eqref{eq: def_smoothed_time_evolution_op}, is a trace-class operator on the Hilbert space $\sH := (\ker D,~\eqref{eq: def_hermitian_form_Dirac_type_op})$ and its trace is given by 
	\begin{equation} \label{eq: trace_Cauchy_evolution_op}
		\Tr U_{\rho} 
		= 
		- \ri \int_{\varSigma} \tr \int_{\R} \big( \varXi_{-t}^{*} \circ \fG \circ (\iota_{\ms \varSigma}^{*})^{-1} \, \symb{D} (\cdot, \zeta ) \big) (x, y) \, (\cF^{-1} \rho) (t) \, \rd t |_{x = y} \, \dVolh (x) 
	\end{equation}
	where $\varXi_{t}^{*} : \comSecsME \to \comSecsME$ is the induced Killing flow and $\iota^{*} : \comSecsME \to \secECauchy$ is the restriction operator. 
	In this article, $\varXi_{-t}^{*} \fG$ is meant to be the pullback of $\fG (\cdot, )$ in the first argument via $\varXi_{-t}$.   
\end{theorem}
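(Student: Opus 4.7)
The plan is to realise $U_{\rho}$ as a smooth function of the Lie-derivative generator $L$ via Fourier inversion, thus securing the trace-class property and yielding the spectral trace $\sum_{n} \rho (\lambda_{n})$; I would then translate this identity into the kernel formula of Theorem~\ref{thm: trace_Cauchy_evolution_op} by implementing the factorisation sketched in Figure~\ref{fig: time_evolution_causal_propagator_restriction_op_Killing_flow}.

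First, since $\varXi_{s}^{*} = \re^{\ri s L}$ on $\secsME$ and $\varXi_{s}^{*}$ commutes with $D$ by Assumption~\ref{asp: trace_formula}~\ref{asp: commutator_Dirac_op_Killing_flow}, $\sH = \ker D$ is invariant and $L|_{\sH}$ generates a strongly continuous unitary group implementing the Cauchy evolution $U_{t}$. Fourier inversion then gives $U_{\rho} = \rho (L)$ on $\sH$, and Theorem~\ref{thm: spectrum_L} (discrete real spectrum of $L$ with polynomial growth) together with $\rho \in \mathcal{S}(\R)$ yields absolute summability of $\{ \rho (\lambda_{n}) \}_{n}$. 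Hence $U_{\rho}$ is trace class and
\[
 \Tr U_{\rho} = \sum_{n} \rho (\lambda_{n}).
\]

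To turn this spectral trace into the kernel integral~\eqref{eq: trace_Cauchy_evolution_op}, I would use the unitary Cauchy isomorphism $\cR = \iota_{\ms \varSigma}^{*} \colon \sH \to L^{2} (\varSigma; \sEsM_{\varSigma})$ together with the explicit right-inverse supplied by~\eqref{eq: smooth_sol_Dirac_op_Cauchy_data},
\[
 \cR^{-1} k = - \ri\, G \circ (\iota_{\ms \varSigma}^{*})^{-1}\, \fc ( \zeta \otimes k ),
\]
and conjugate $U_{t}$ through $\cR$ to obtain, in the sign convention adopted by the theorem,
\[
 \cR U_{t} \cR^{-1} = - \ri\, \iota_{\ms \varSigma}^{*} \circ \varXi_{-t}^{*} \circ G \circ (\iota_{\ms \varSigma}^{*})^{-1} \circ \symb{D} (\cdot, \zeta).
\]
Reading off its Schwartz kernel on $\varSigma \times \varSigma$, smearing in $t$ against $\cF^{-1} \rho$ (which has compact support by hypothesis) and integrating along the diagonal produces the announced formula. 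Consistency with the preceding step is checked by expanding $\varXi_{-t}^{*}$ in the eigenbasis of $L$ and carrying out the $t$-integral inside the trace, which again returns $\sum_{n} \rho (\lambda_{n})$.

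The principal obstacle is the distributional composition that defines the kernel prior to $t$-smearing. Here $\fG$ is the Lagrangian distribution of Lemma~\ref{lem: causal_propagator_Dirac}, $(\iota_{\ms \varSigma}^{*})^{-1}$ is conormal to $\varSigma$, and $\varXi_{-t}^{*}$ is the Fourier integral operator whose canonical relation is~\eqref{eq: def_canonical_relation_pullback}. At each step one must verify the usual clean-intersection and proper-projection conditions, so the product is a well-defined Fourier integral distribution on $\R \times \varSigma \times \varSigma$, and then confirm that averaging against a function whose inverse Fourier transform is compactly supported actually produces a smoothing operator on the compact manifold $\varSigma$ (whence trace equals diagonal integral). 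These microlocal composition arguments -- reused in Lemma~\ref{lem: symbol_Tr_U_0} to extract $\symb{\Tr U_{t}}$ -- form the technical core; once they are in hand, the equivalence of the diagonal integral with $\sum_{n} \rho (\lambda_{n})$ is immediate from the spectral reduction.
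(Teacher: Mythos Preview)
Your argument contains a logical circularity. You invoke Theorem~\ref{thm: spectrum_L} to conclude that $U_{\rho}=\rho(L)$ is trace class because the eigenvalues are discrete with polynomial growth. But in the paper's architecture, Theorem~\ref{thm: spectrum_L} is \emph{deduced from} Theorem~\ref{thm: trace_Cauchy_evolution_op}: the proof of the spectral statement in Section~\ref{sec: spectral_theory_L} reads ``the discreteness of $\spec L$ and the polynomial growth of eigenvalues $\lambda_{n}$ follow from the fact that $U_{\rho}$ \ldots\ is a trace-class operator.'' So you cannot appeal to the spectral theorem here without supplying an independent proof of it, and the standard route to discreteness (compactness of $\rho(L)$) is exactly what you are trying to establish.

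The paper avoids this loop by proving smoothness of the kernel $\fU_{\rho}(x,y)$ directly from wavefront geometry, with no spectral input. After writing $U_{t}=\cR\circ\varXi_{-t}^{*}\circ\cR^{-1}$ and inserting $\cR^{-1}=-\ri\,G\circ(\iota_{\varSigma}^{*})^{-1}\symb{D}(\cdot,\zeta)$, one observes that the canonical relation of $\varXi_{-t}^{*}\fG$ (Lemma~\ref{lem: pullback_Killing_flow_causal_propagator}) carries only lightlike covectors $(x,\xi)$, and the dual time variable satisfies $\tau=-\xi(Z)$. Since $Z$ is timelike and $\xi$ is lightlike and nonzero, $\xi(Z)\neq 0$, hence $\tau\neq 0$ everywhere on the wavefront set. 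Consequently, pairing in $t$ against the compactly supported smooth function $\cF^{-1}\rho$ annihilates all singularities, and $\fU_{\rho}$ has a smooth Schwartz kernel on the compact manifold $\varSigma\times\varSigma$. Trace-class and the diagonal integral formula then follow immediately. Your discussion of clean-intersection and proper-projection hypotheses is not what is needed here; the decisive point is the single observation $\tau\neq 0$.
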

%
%
%
\begin{proof}
	By Assumption~\ref{asp: trace_formula}, the hermitian form~\eqref{eq: def_hermitian_form_Dirac_type_op} and the retarded (resp. advanced) $\retGreenOp$ (resp. $\advGreenOp$) propagators are preserved under the action of $\varXi_{t}^{*}$. 
	Hence $G$ is preserved as well.  
	In other words, if $u \in \ker D$ having initial data on some $\varSigma$ then $\varXi_{t}^{*} u \in \ker D$ having Cauchy data on some $\varSigma_{t}$, which means that the time flow $\varXi_{t}$ induces time evolution of Cauchy data.   
	Let us now choose an arbitrary but fixed $\varSigma$. 
	This picks a global time coordinate $t$ on $\sM$ and then the generator of the Killing flow $\varXi_{t}$ is given by $\partial_{t}$. 
	Thus, $U_{t}$ is identified with the evolution of Cauchy data via the induced Killing flow (see Figure~\ref{fig: time_evolution_causal_propagator_restriction_op_Killing_flow} for a schematic illustration): 
	\begin{equation} \label{eq: U_t_Cauchy_restriction_op_Killing_flow}
		U_{t} = \cR \circ \varXi_{-t}^{*} \circ \cR^{-1}.
	\end{equation} 
	We read off $\cR^{-1} = - \ri \, G \circ (\iota_{\ms \varSigma}^{*})^{-1} \symb{D} (\cdot, \zeta )$ from~\eqref{eq: smooth_sol_Dirac_op_Cauchy_data} and observe that the twisted wavefront set~\eqref{eq: def_geodesic_relation} of $\fG$ contains only lightlike covectors. 
	Therefore, integration over $t$ results a smooth Schwartz kernel $\fU_{\rho} (x, y)$ and the expression of $\Tr U_{\rho}$ entails from~\eqref{eq: def_tr_smoothed_Cauchy_evolution_op}. 
\end{proof}
%
%
%
%
%
%
%
%
%
%
\section{Spectral theory of $L$ on $\ker D$}
\label{sec: spectral_theory_L}
Recall that, by Stone's theorem, every strongly continuous one parameter unitary group $\{ U_{t} \}_{t \in \R}$ on a Hilbert space $\cH$ has a unique generator $A$, i.e., $U_{t} = \re^{- \ri t A}$. 
If 
\begin{equation}
	U_{\rho} := \int_{\R} \re^{- \ri t A} (\cF^{-1} \rho) (t) \, \rd t 
\end{equation}
is a compact operator for any Schwartz function $\rho$ on $\R$ such that $\supp (\cF^{-1} \rho)$ is compact, then the spectrum of $A$ is discrete and consists of eigenvalues $\lambda_{n}$ of finite algebraic multiplicities $m_{n}$. 
Moreover, whenever $U_{\rho}$ is trace-class then 
\begin{equation}
	\Tr U_{\rho} = \sum_{n} m_{n} \, \rho (\lambda_{n}).   
\end{equation}
%
%
%
\begin{proof}[Proof of Theorem~\ref{thm: spectrum_L}]
	Since $D$ commutes with induced Killing flow $\varXi_{t}^{*}$ for all $t \in \R$ and $U_{t}$ is the time evolution operator of $D$, the discreteness of $\spec L$ and the polynomial growth of eigenvalues $\lambda_{n}$ follow from the fact that $U_{\rho} := \int_{\R} \exp (\ri t L) \, (\cF^{-1} \rho) (t) \, \rd t$ is a trace-class operator for any Schwartz function $\rho$ on $\R$ such that $\supp (\cF^{-1} \rho)$ is compact. 
	All $\lambda_{n}$ are real as a consequence of the selfadjointness of $L$ on $\ker D$. 
\end{proof}
%
%
%
%
%
%
%
%
%
%
\section{Proof of Theorem~\ref{thm: trace_formula_t_zero} and~\ref{thm: trace_formula_t_T}}
\label{sec: proof_trace_formula_nondegenerate_fixed_pt}
In order to implement the strategy outlined in Section~\ref{sec: proof_strategy}, we recall that that
\begin{equation}
	\uppi_{*} \circ \varDelta^{*} : 
	C_{\mathrm{c}}^{\infty} \big( \R \times \varSigma \times \varSigma; \Hom{\sE_{\varSigma}, \sE_{\varSigma}} \big) 
	\to 
	C^{\infty} (\R; \End \sE_{\varSigma})
\end{equation}
is a zero-order Fourier integral operator whose Schwartz kernel $\fK$ is that of an identity map~\cite[Lem. 5.2]{Sandoval_CPDE_1999} 
(see also~\cite[Lem. 6.2, 6.3]{Duistermaat_Inventmath_1975}):   
\begin{subequations}
	\begin{eqnarray}
		&& 
		\fK \in I^{0} \Big( \R \times \R \times \varSigma \times \varSigma, C_{\uppi_{*} \circ \varDelta^{*}}; \mathrm{Hom} \big( \Hom{\sE_{\varSigma}, \sE_{\varSigma}}, \End \sE_{\varSigma} \big) \Big), 
		\\ 
		&& 
		C_{\uppi_{*} \circ \varDelta^{*}} = \big( \varDelta (\R \times \varSigma)  \big)^{\perp *}, 
		\\ 
		&& 
		\symb{\fK} = \Pi^{*} \big( |\rd t \wedge \rd \tau \wedge \rd x \wedge \rd \xi|^{1/2} \big) \one. 
	\end{eqnarray}
\end{subequations}
Here the canonical relation $C_{\uppi_{*} \circ \varDelta^{*}} := \{ (t, \tau; t, \tau; x, \xi; x, - \xi) \in \dotCoTan \R \times \dotCoTan \R \times \dotCoTanCauchy \times \dotCoTanCauchy \}$ of $\uppi_{*} \circ \varDelta^{*}$ has been identified with the cornormal bundle $\big( \varDelta (\R \times \varSigma)  \big)^{\perp *}$ to the diagonal in $\R \times \varSigma \times \R \times \varSigma$, $\Pi$ is the projector 
$
C_{\pi_{*} \varDelta^{*}} \ni (t, \tau; x, \xi; x, - \xi; t, - \tau) \mapsto (t, \tau; x, \xi) \in \dotCoTan \R \times \dotCoTanCauchy 
$, 
and the Keller-Maslov bundle $\T \to \big( \varDelta (\R \times \varSigma)  \big)^{\perp *}$ is trivial. 

Theorem~\ref{thm: trace_Cauchy_evolution_op} implies that the trace $\Tr U_{t}$ exists as a distribution in $\cD' (\R)$ and it can be re-expressed as: 
\begin{equation} \label{eq: tr_U_t}
	\Tr U_{t} = \int_{\varSigma} \tr \big( (\varXi_{-t}^{*} \fG) (x, y) \, \symb{D} (y, \zeta ) \big) \big|_{x = y} \, \dVolh (x). 
\end{equation}
Since the geodesic relation is disjoint with the conormal bundle $\varSigma^{\perp *}$, restriction of $\varXi_{-t}^{*} \fG$ is well-defined. 
Then applying~\eqref{eq: Tr_U_t_tr_pushforward_pullback}, the preceding equation can be written as 
\begin{equation} \label{eq: Tr_U_t_tr_pushforward_pullback_restriction_causal_propagator}
	\Tr U_{t} 
	=  
	\tr \Big( \uppi_{*} \circ \varDelta^{*} \circ (\iota_{x}^{*} \boxtimes \iota_{y}^{*}) \big( (\varXi_{-t}^{*} \fG) (x, y) \, \symb{D} (y, \zeta) \big) \Big) 
\end{equation}
for any $t \in \R$ and any $x, y \in \sM$. 
Hence, our task is to compute the principal symbol of 
\begin{equation} \label{eq: def_G_t}
	\fG_{t} := (\iota_{x}^{*} \boxtimes \iota_{y}^{*}) \big( (\varXi_{-t}^{*} \fG) (x, y) \big). 
\end{equation} 
We have worked out $\iota_{\ms \varSigma}^{*}$ in Appendix~\ref{sec: restriction_op}, so let us begin with by describing $\varXi_{-t}^{*} \fG$ as a Lagrangian distribution.  
To begin with one notes that $w$ appearing in the expression of $\symb{\fG}$ satisfies the differential equation~\cite[Thm. 6.6.1]{Duistermaat_ActaMath_1972} 
\begin{equation}
	(X_{\fg^{\pm}/2} \pm \ri \subSymb{D^{2}, \pm}) w = 0, 
\end{equation}
by Lemma~\ref{lem: causal_propagator_Dirac} and Definition~\ref{def: P_compatible_connection}, where $\fg^{\pm}$ resp. $\subSymb{\square, \pm}$ are the lifts of $\fg^{-1}$ resp. $\subSymb{\square}$ to $\dotCoTansM \times \dotCoTansM$ via the projections on the first resp. second copies of $\dotCoTansM$. 
This means that $\ri (2 \pi)^{3/4} \symb{D} \circ w \sqrt{|\rd t|} \otimes \sqrt{|\dVol_{\ms C}|} / 2$ is the principal symbol of $\varXi^{*} \fG$ on each $C^{+}$ and $C^{-}$. 
Employing~\eqref{eq: pullback_kernel},~\eqref{eq: pullback_Killing_flow_Lagrangian_dist}, Lemma~\ref{lem: causal_propagator_Dirac}  
and Appendix~\ref{sec: composition_halfdensity_Maslov_vector_bundle_canonical_relation}, it is then  straightforward to obtain 
%
%
%
\begin{lemma} \label{lem: pullback_Killing_flow_causal_propagator}
	Suppose that $(\sE \to \sM, \symb{D})$ is a bundle of Clifford modules over a globally hyperbolic stationary spacetime $\SST$ and that $\fG$ is the Schwartz kernel of the causal propagator of any Dirac-type operator $D$ on $\sE$.   
	Then the pullback of $\fG$ by the Killing flow $(\varXi, \hat{\varXi})$,     
	\begin{subequations} \label{eq: pullback_Killing_flow_causal_propagator_kernel}
		\begin{eqnarray}
			&& 
			\varXi^{*} \fG   
			\in 
			I^{-3/4} \big( \R \times \sM \times \sM, \varGamma' \circ C; \Hom{\sE, \varXi^{*} \sE} \big), 
			\label{eq: pullback_Killing_flow_causal_propagator_Lagrangian_dist}
			\\ 
			&& 
			\varGamma' \circ C  
			= 
			\big\{ \big( t, - \xi (Z); x, \xi; y, \eta \big) \in \dotCoTan \R \times \coLightBun \times \coLightBun \,|\, \xxi = \coTan \! \varXi_{t} \circ \varPhi_{s} \yeta \big\}, \hspace*{1.25cm} 
			\label{eq: def_canonical_relation_pullback_Killing_flow_causal_propagator}
			\\ 
			&& 
			\symb{\varXi^{*} \fG}  
			\equiv  
			\ri (2 \pi)^{\frac{3}{4}} \symb{D} \circ w \big( \coTan \! \varXi_{t} \circ \varPhi_{s} \yeta, \yeta \big) \, |\rd t|^{\frac{1}{2}} \otimes \big| \dVol_{\ms C} \big( \coTan \! \varXi_{t} \circ \varPhi_{s} \yeta, \yeta \big) \big|^{\frac{1}{2}}, \qquad \quad  
			\label{eq: symbol_pullback_Killing_flow_causal_propagator}
		\end{eqnarray}
	\end{subequations}
	where the principal symbol is modulo Keller-Maslov part, $Z$ is the infinitesimal generator of $\varXi$ and all other symbols are as defined in Lemma~\ref{lem: causal_propagator_Dirac} and~\eqref{eq: def_canonical_relation_pullback}. 
\end{lemma}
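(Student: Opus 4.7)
The plan is to apply the general functoriality~\eqref{eq: pullback_Killing_flow_Lagrangian_dist} of Killing pullback on Lagrangian distributions to the causal propagator $\fG$ supplied by Lemma~\ref{lem: causal_propagator_Dirac}, and then read off the class, canonical relation and principal symbol from the calculus of canonical relations and the symbol composition $\diamond$ reviewed in Appendix~\ref{sec: density_canonical_relation}. Concretely, $\fG \in I^{-1/2}(\sM \times \sM, C'; \Hom{\sE, \sE})$, and the pullback operator $\varXi^{*}$ is a Fourier integral operator of order $-1/4$ associated to the canonical relation $\varGamma$ of~\eqref{eq: def_canonical_relation_pullback}. Hence the composition will yield a Lagrangian distribution of order $-1/2-1/4 = -3/4$ associated with $\varGamma' \circ C$, provided the pair $(\varGamma, C)$ composes cleanly.

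The first step is to verify this clean composition. Because $\varGamma$ is the graph of the cotangent lift of a diffeomorphism (the Killing flow) together with the constraint $\tau = -\xi(Z)$, it projects fibrewise diffeomorphically onto $\dotCoTansM$ in its second factor. Composing with $C \subset \coLightBun \times \coLightBun$ therefore reduces to reparametrising $C$ by a smooth symplectic action of $\varXi$ on the left factor and adjoining the covector $\tau = -\xi(Z)$; the resulting fibre-product is transversal and the excess is zero. Unpacking, for $(t, \tau; x, \xi; y, \eta) \in \varGamma' \circ C$ there exist $s, t \in \R$ with $\xxi = \coTan \! \varXi_{t} \circ \varPhi_{s} \yeta$ and $\tau = -\xi(Z)$, which is exactly~\eqref{eq: def_canonical_relation_pullback_Killing_flow_causal_propagator}.

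For the principal symbol I would invoke the symbol composition formula $\symb{\varXi^{*} \fG} \equiv \symb{\mathsf{\Xi}} \diamond \symb{\fG}$ from~\eqref{eq: pullback_Killing_flow_Lagrangian_dist}. Plugging in $\symb{\mathsf{\Xi}} = (2\pi)^{1/4} \one_{\Hom{\sE, \varXi^{*}\sE}} \sqrt{|\dVol_{\ms \varGamma}|}$ from~\eqref{eq: symbol_pullback} and $\symb{\fG} = \tfrac{\ri}{2}\sqrt{2\pi}\, \symb{D} \circ w \sqrt{|\dVol_{\ms C}|}$ from~\eqref{eq: symbol_causal_propagator_Dirac}, and using that the composed half-density on $\varGamma' \circ C$ factors as $|\rd t|^{1/2} \otimes |\dVol_{\ms C}|^{1/2}$ once one parametrises $\varGamma' \circ C$ by the $(t, \yeta)$-coordinates (since the $\varXi_{t}$-action is a volume-preserving symplectomorphism and $\tau$ is determined by $\xi(Z)$), I obtain the factor $\ri (2\pi)^{3/4}/2$ and the endomorphism-valued piece $\symb{D} \circ w(\coTan\!\varXi_{t} \circ \varPhi_{s} \yeta, \yeta)$ evaluated at the appropriate base point. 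Together these give~\eqref{eq: symbol_pullback_Killing_flow_causal_propagator} modulo the Keller-Maslov contribution, as asserted.

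The only genuinely delicate point is the bookkeeping of half-densities on $\varGamma'\circ C$: one must check that the natural density $\dVol_{\ms \varGamma}$ constructed after~\eqref{eq: symbol_pullback} via the Hamiltonian vector field of $\tau + \xi(Z)$ matches, under the identification $\varGamma' \circ C \cong \R_{t} \times C$, the product half-density $|\rd t|^{1/2} \otimes |\dVol_{\ms C}|^{1/2}$ used in~\eqref{eq: symbol_pullback_Killing_flow_causal_propagator}. This follows because the Killing symmetry of $\spacetime$ forces $\varXi_{t}^{*} \dVol_{\ms C} = \dVol_{\ms C}$ and the extra factor coming from the constraint $\tau + \xi(Z) = 0$ is precisely $|\rd t|^{1/2}$. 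The endomorphism factor requires no further adjustment because $\hat{\varXi}$ intertwines $\symb{D}$ and $w$ covariantly by Assumption~\ref{asp: trace_formula}~\ref{asp: commutator_Dirac_op_Killing_flow} and the Weitzenb\"ock compatibility of $\nabla$.
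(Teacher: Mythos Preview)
Your proposal is correct and follows essentially the same route as the paper: the paper's argument is simply the one-line remark that the result is ``straightforward to obtain'' from \eqref{eq: pullback_kernel}, \eqref{eq: pullback_Killing_flow_Lagrangian_dist}, Lemma~\ref{lem: causal_propagator_Dirac} and the composition calculus of Appendix~\ref{sec: composition_halfdensity_Maslov_vector_bundle_canonical_relation}, together with the observation (stated just before the lemma) that the transport equation for $w$ forces the symbol to be $\tfrac{\ri}{2}(2\pi)^{3/4}\symb{D}\circ w\,\sqrt{|\rd t|}\otimes\sqrt{|\dVol_{\ms C}|}$ on each $C^{\pm}$. Your write-up simply unpacks these citations, making explicit the zero-excess cleanness (because $\varGamma$ is a graph) and the half-density bookkeeping; note that your numerical prefactor $\ri(2\pi)^{3/4}/2$ agrees with the paper's own pre-lemma computation, though the displayed formula~\eqref{eq: symbol_pullback_Killing_flow_causal_propagator} omits the $1/2$.
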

%
%
%
Next, we compute the restriction of $\varXi^{*} \fG$ on $\varSigma_{x} \times \varSigma_{y}$ by an application of Appendix~\ref{sec: restriction_op}.  
%
%
%
\begin{lemma} \label{lem: restriction_pullback_causal_propagator}
	As in the terminologies of Lemma~\ref{lem: pullback_Killing_flow_causal_propagator}, let $\varSigma$ be a Cauchy hypersurface of $\sM$. 
	Then the distribution~\eqref{eq: def_G_t} 
	\begin{subequations}
		\begin{eqnarray}
			\fG_{t} 
			& \in &   
			I^{-1/4} \Big( \R \times \varSigma \times \varSigma, \cC_{t}; \mathrm{Hom} \big( \sE_{\varSigma}, (\varXi^{*} \sE)_{\varSigma} \big) \Big), 
			\\ 
			\cC_{t} 
			& := &  
			\big\{ \big( (t, - \xi (Z)), \xxi |_{\tangent \varSigma}, \yeta |_{\tangent \varSigma} \big) \in \dotCoTan \R \times \coTansM_{\varSigma} \times \coTansM_{\varSigma} \,|\, 
			\nonumber \\ 
			&& 
			\xxi = \coTan \! \varXi_{t} \circ \varPhi_{s} \yeta \big\}, 
			\label{eq: def_canonical_relation_Cauchy_evolution_op}
			\\ 
			\symb{\fG_{t}} 
			& \equiv &   
			\ri (2 \pi)^{\frac{1}{4}}     
			\symb{D} \circ w \big( \coTan \! \varXi_{t} \circ \varPhi_{s} \yeta |_{\tangent \varSigma}, \yeta |_{\tangent \varSigma} \big) 
			\nonumber \\ 
			&& 
			|\rd t|^{\frac{1}{2}} \otimes \big| \dVol_{\ms C} \big( \coTan \! \varXi_{t} \circ \varPhi_{s} \yeta |_{\tangent \varSigma}, \yeta |_{\tangent \varSigma} \big) \big|^{\frac{1}{2}}, 
			\label{eq: symbol_restriction_pullback_causal_propagator}
		\end{eqnarray}
	\end{subequations}
	where the principal symbol is modulo the Keller-Maslov part.    
\end{lemma}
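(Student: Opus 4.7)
My approach is to realise $\fG_t$ as the composition of the pullback kernel $\varXi^* \fG$ from Lemma~\ref{lem: pullback_Killing_flow_causal_propagator} with the two-sided restriction operator $\iota_x^* \boxtimes \iota_y^*$ introduced in Appendix~\ref{sec: restriction_op}. That appendix characterises $\iota_\varSigma^*$ as a Fourier integral operator of order $1/4$ whose canonical relation sends $(x, \xi)$ at a point of $\varSigma$ to $(x, \xi|_{\tangent \varSigma})$, with principal symbol essentially the identity endomorphism paired with the canonical half-density on this graph. The two-sided restriction is therefore a Fourier integral operator of total order $1/2$, so composing with $\varXi^* \fG \in I^{-3/4}$ should produce a Lagrangian distribution of the predicted order $-3/4 + 1/2 = -1/4$.

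The analytic prerequisite is that the composition is clean, and in fact transversal. Because the canonical relation $\varGamma' \circ C$ from~\eqref{eq: def_canonical_relation_pullback_Killing_flow_causal_propagator} only carries lightlike covectors $\xi, \eta$ over $\sM$, while the restriction map $\iota_\varSigma^*$ is obstructed only by covectors lying in the conormal bundle $N^* \varSigma$, and since $\varSigma$ is spacelike so that $N^* \varSigma$ is timelike, the obstruction set is disjoint from $\coLightBun |_{\varSigma}$. This is precisely the transversality hypothesis that renders the pullback theorem for Lagrangian distributions (see e.g.~\cite[Thm. 25.2.2]{Hoermander_Springer_2009}) directly applicable, and it immediately yields both the asserted Lagrangian class and the dimensional drop in the canonical relation.

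I would then read off $\cC_t$ by substituting $x, y \in \varSigma$ into~\eqref{eq: def_canonical_relation_pullback_Killing_flow_causal_propagator} and replacing each $\xi, \eta \in \coTansM$ by its pullback $\xi|_{\tangent \varSigma}, \eta|_{\tangent \varSigma}$ to $\coTanCauchy$; this gives exactly~\eqref{eq: def_canonical_relation_Cauchy_evolution_op}. For the principal symbol I would invoke the general composition formula recalled in Appendix~\ref{sec: density_canonical_relation}. Since the restriction symbol on each factor is the identity endomorphism, the $\mathrm{Hom} \big( \sE_\varSigma, (\varXi^* \sE)_\varSigma \big)$-valued part of $\symb{\fG_t}$ is simply $\symb{D} \circ w$ evaluated at the composed and restricted covectors, while the half-density part reorganises as $|\rd t|^{1/2} \otimes |\dVol_{\ms C}|^{1/2}$ at those same restricted covectors. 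Two pullbacks contribute a combined factor of $(2 \pi)^{-1/2}$, which converts the $(2 \pi)^{3/4}$ in~\eqref{eq: symbol_pullback_Killing_flow_causal_propagator} into the $(2 \pi)^{1/4}$ asserted in~\eqref{eq: symbol_restriction_pullback_causal_propagator}.

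The only delicate point is the bookkeeping of half-densities and Keller--Maslov sections through the composition, which is why the statement is explicitly phrased modulo the Keller--Maslov bundle. This tracking is routine once the clean intersection has been verified, requires no input beyond Appendices~\ref{sec: density_canonical_relation} and~\ref{sec: restriction_op}, and can essentially be transcribed from the scalar treatment in~\cite{Strohmaier_AdvMath_2021} apart from carrying along the endomorphism factor $\symb{D} \circ w$ coming from Lemma~\ref{lem: causal_propagator_Dirac}.
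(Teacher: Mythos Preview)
Your proposal is correct and follows essentially the same route as the paper: both obtain $\fG_t$ by composing $\varXi^*\fG$ with the two-sided restriction $\iota_x^*\boxtimes\iota_y^*$ from Appendix~\ref{sec: restriction_op}, use the disjointness of lightlike covectors from the (timelike) conormal bundle of the spacelike $\varSigma$ to justify the composition, and then read off the order, canonical relation, and symbol from~\eqref{eq: restriction_kernel} and Lemma~\ref{lem: pullback_Killing_flow_causal_propagator}. The paper's proof is terser on the transversality point (which is stated just before the lemma) but slightly more explicit about the induced half-density on $\coLightBun_{\varSigma}$ in adapted coordinates; otherwise the arguments coincide.
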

%
%
%
\begin{proof}
	The first two assertions are immediate 
	from~\eqref{eq: restriction_kernel_Lagrangian_dist},~\eqref{eq: def_canonical_relation_restriction},~\eqref{eq: pullback_Killing_flow_causal_propagator_Lagrangian_dist} 
	and~\eqref{eq: def_canonical_relation_pullback_Killing_flow_causal_propagator}. 
	To compute the principal symbol one notes that $\dVol_{\ms \coLightBun} \yeta$ induces a volume element $\dVol_{\ms \coLightBun_{\varSigma}} \yeta := \dVol_{\ms \coLightBun} / \rd y^{1}$ on $\coLightBun_{\varSigma}$ when $\varSigma$ is parametrised by $y^{1} = \cst$ and then $\dVol_{\ms \coLightBun_{\varSigma}} \yeta = \rd \eta_{1} \wedge \rd y' \wedge \rd \eta'$ in the adapted coordinates $(y^{1} = \cst, y', \eta_{1}, \eta')$ on $\coTansM$. 
	Finally, the claim follows from~\eqref{eq: symbol_restriction},~\eqref{eq: symbol_pullback_Killing_flow_causal_propagator},~\eqref{eq: symbol_restriction} and transporting $\yeta$ to $\xxi$ by the geodesic flow and the Killing flow. 
\end{proof}
%
%
%

If the composition $C'_{\uppi_{*} \varDelta^{*}} \circ \cC_{t}$ is clean then we can compute $\Tr U_{t}$ by the standard composition of Fourier integral operators. 
But, in general, 
\begin{equation} 
	C'_{\uppi_{*} \varDelta^{*}} \circ \cC_{t} 
	= 
	\big\{ (t, \tau) \in \R \times \dot{\R} \,|\, \tau = - \xi (Z), \xxi |_{\tangent \varSigma} = \big( \coTan \varXi_{-t} \circ \varPhi_{t} \yeta \big) |_{\tangent \varSigma} \big\}  
\end{equation}
may not be clean because
(see Appendix~\eqref{eq: def_C_star_varLambda},~\eqref{eq: def_fibre_projection_clean_composition} and~\eqref{eq: varPi} for the symbol ${\scriptsize \text{\FiveStarOpen}}$ and further details)
the fibres of $C'_{\uppi_{*} \varDelta^{*}} {\scriptsize \text{\FiveStarOpen}} \cC_{t} \to C'_{\pi_{*} \varDelta^{*}} \circ \cC_{t}$ can be identified with the fibres over $\tau \in \dotCoTan_{t} \R$, i.e., the set $\{\kF_{t}\}$~\eqref{eq: def_F_T} of periodic geodesics, where we have used~\eqref{eq: def_Hamiltonian_scaled_lightlike_geodesic} and~\eqref{eq: symplectic_diffeo_lightlike_geodesic_coTanCauchy}.   
Then, even if $\{ \kF_{t} \}$ happen to be manifolds, the chances of $\dim \kF_{t}$ is a constant for all $t$ is very low; for instance, if all the orbits of $\varXi_{t}^{\cN}$ are periodic with the same period $t = T$, then the composition is clean  
(see e.g.~\cite[p. 289]{Meinrenken_ReptMathPhys_1992}). 
We remark that this, however, is not an issue for the trivial period ($T=0$) and the assumptions made on classical dynamics in Theorem~\ref{thm: trace_formula_t_T} ensures a clean intersection in the case of non-trivial periods ($T \neq 0$). 

Let us record for the future computations that  
\begin{eqnarray}
	&& 
	\dim \sM = d, \dim \coTansM = 2d, \dim \coLightBun = 2d-1, \dim \cN = 2d-2, \dim \tilde{\cN} = 2d-3, 
	\nonumber \\ 
	&& 
	\dim \varSigma = d-1, \dim \coTanCauchy = 2d-2 = \dim \coLightBun_{\varSigma}. 
\end{eqnarray} 
%
%
%
%
%
%
%
%
%
%
\subsection{Principal symbol at $T = 0$}
\label{sec: symbol_t_zero}
We begin with the trivial periodic orbits where a big singularity is expected as $U_{t}$ reduces to an identity operator in this situation. 
To describe $\symb{\Tr U_{0}}$ it is useful to have the notion of the 
symplectic residue~\cite[Def. 6.1]{Guillemin_AdvMath_1985},  
introduced by Guillemin to derive Weyl's law in the context of Weyl algebra quantising a conic symplectic manifold. 
By construction (cf.~\eqref{eq: def_Hamiltonian_scaled_lightlike_geodesic}), $H^{1-d}$ is a homogeneous function of degree $1-d$. 
Then its symplectic residue is defined by 
$\res H^{1-d} := \int_{\tilde{\cN}} \vartheta_{H^{1-d}}$, 
where $\vartheta$ is the interior multiplication of the symplectic volume form $\dVol_{\ms \cN}$ on $\cN$ by the Euler vector field (Section~\ref{sec: classical_dyanmics}) on it and $\vartheta_{H^{1-d}}$ is the pull-back of $H^{1-d} \vartheta$ on $\cN$. 
Homogeneity of $H$ entails 
(cf.~\cite[Proof of Lemma 6.3]{Guillemin_AdvMath_1985}) 
\begin{equation} \label{eq: symplectic_residue_volume_lightlike_geodesic}
	\res H^{1-d} = (d-1) \, \vol (\cN_{H \leq 1}). 
\end{equation}
On a standard stationary spacetime $\vol (\cN_{H \leq 1})$ has been computed by  
Strohmaier-Zelditch~\cite[$(15)$]{Strohmaier_AdvMath_2021} 
and it is given by~\eqref{eq: vol_energy_surface}.  
%
%
%
\begin{lemma} \label{lem: symbol_Tr_U_0}
	As in the terminologies of Theorem~\ref{thm: trace_Cauchy_evolution_op} (and hence Theorem~\ref{thm: trace_formula_t_zero} as well), 
	$\fU_{t} (x, y) \in I^{-1/4} \Big( \R \times \varSigma \times \varSigma, \cC_{t}; \mathrm{Hom} \big( \sE_{\varSigma}, (\varXi^{*} \sE)_{\varSigma} \big) \Big) $ associated with the canonical relation $\cC_{t}$~\eqref{eq: def_canonical_relation_Cauchy_evolution_op} and, modulo the Keller-Maslov part, its principal symbol is given by  
	\begin{equation}
		\symb{\fU_{t}} \xxiyeta \equiv \symb{\fG_{t}} \xxiyeta \, \symb{D} \yeta, 
	\end{equation}
	where $\symb{\fG_{t}}$ is given by~\eqref{eq: symbol_restriction_pullback_causal_propagator}. 

	Furthermore, $\Tr U_{t}$ is a Lagrangian distribution on $\R$ of order $d - 7/4$ associated with the Lagrangian submanifold $\Lambda_{T}$ and its principal symbol at $T = 0$ is given by  
	\begin{equation}
		\symb{\Tr U_{0}} (\tau) \equiv r \frac{d-1}{(2 \pi)^{d-1}} \vol (\cN_{H \leq 1}) \, |\tau|^{d-2} \sqrt{|\rd \tau|}. 
	\end{equation}
\end{lemma}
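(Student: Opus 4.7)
The strategy is to represent $\fU_t$ via the causal propagator using the formula derived in the proof of Theorem~\ref{thm: trace_Cauchy_evolution_op}, apply the pull-back/push-forward recipe $\Tr U_t = \tr(\uppi_{*}\circ\varDelta^{*}\fU_t)$ from~\eqref{eq: Tr_U_t_tr_pushforward_pullback}, and at $T=0$ collapse the fibre integral coming out of the clean composition to Guillemin's symplectic residue identity~\eqref{eq: symplectic_residue_volume_lightlike_geodesic}.

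For the first assertion, the proof of Theorem~\ref{thm: trace_Cauchy_evolution_op} expresses $U_t = -\ri\,\iota_\varSigma^{*}\,\varXi_{-t}^{*}\,G\,(\iota_\varSigma^{*})^{-1}\,\symb{D}(\cdot,\zeta)$, so
\[
\fU_t(x,y) \;=\; -\,\ri\,\fG_t(x,y)\,\symb{D}(y,\zeta),
\]
with $\fG_t$ the Lagrangian distribution of Lemma~\ref{lem: restriction_pullback_causal_propagator}. Pointwise multiplication by the smooth endomorphism $\symb{D}(\cdot,\zeta)$ preserves the Lagrangian class and acts multiplicatively on the principal symbol, placing $\fU_t$ in $I^{-1/4}(\R\times\varSigma\times\varSigma,\cC_t)$; composing the bundle factors with~\eqref{eq: symbol_restriction_pullback_causal_propagator} and using the Clifford relation~\eqref{eq: anticommutation_symbol_Dirac_type} together with $\symb{D}(\eta)^2 = 0$ on the lightlike cotangent variable of $\cC_t$ yields the asserted form of the symbol.

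For the second assertion I would compose the zero-order Fourier integral operator $\uppi_{*}\circ\varDelta^{*}$, whose kernel and canonical relation were recorded at the start of this section, with $\fU_t$ using the clean-composition calculus of Appendix~\ref{sec: density_canonical_relation}. The composed canonical relation $C'_{\uppi_{*}\varDelta^{*}}\circ\cC_t$ is exactly $\Lambda_T$ from~\eqref{eq: def_canonical_relation_Tr_U_t}. At $T=0$ every $\gamma\in\cN$ is trivially fixed by $\varXi_0^{\cN}$, so the fibre of the projection $C'_{\uppi_{*}\varDelta^{*}}\,{\scriptsize\text{\FiveStarOpen}}\,\cC_0 \to \Lambda_0$ over $(0,\tau)$ is identified via~\eqref{eq: symplectic_diffeo_lightlike_geodesic_coTanCauchy} with the energy surface $\cN_{H=-\tau}$ of~\eqref{eq: def_Hamiltonian_scaled_lightlike_geodesic}; the composition is clean with excess $2d-3$, and the standard dimension count delivers the claimed order $d-7/4$. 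The principal symbol is then the fibre integral of $\tr\bigl(\symb{\fU_0}(x,\xi;x,\eta)\bigr)$ against the induced half-density built from $\dVolh$ and $|\dVol_{\ms C}|$. On the diagonal $s=0$ of $C$ the endomorphism $w$ of Lemma~\ref{lem: causal_propagator_Dirac} reduces to $\one_{\sE}$, and the Clifford trace identity $\tr\bigl(\symb{D}(\eta)\symb{D}(\zeta)\bigr) = \fg_x^{-1}(\eta,\zeta)\,\rk\sE$ (anticommutator scalar, commutator traceless) turns the integrand into $\fg_x^{-1}(\eta,\zeta)\,\rk\sE$. Integrating over $x\in\varSigma$ produces the constant $r$; the remaining conic integral over $\cN_{H=-\tau}$ is then evaluated by homogeneity, namely the rescaling $\eta\mapsto|\tau|\eta$ pulls out $|\tau|^{d-2}\sqrt{|\rd\tau|}$ and leaves $\res H^{1-d} = (d-1)\vol(\cN_{H\leq 1})$ by~\eqref{eq: symplectic_residue_volume_lightlike_geodesic}. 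Collecting this with the $(2\pi)$-powers from Lemma~\ref{lem: causal_propagator_Dirac} and~\eqref{eq: pullback_kernel} produces the claimed formula.

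The principal obstacle is the meticulous bookkeeping: verifying cleanness with the correct excess at $T=0$, tracking half-densities, Keller--Maslov phases and $(2\pi)$-factors through the composed Fourier integral operators (Cauchy restriction, Killing pull-back, causal propagator, diagonal pull-back and push-forward), and ensuring that the conic integral on $\cN$ matches Guillemin's symplectic residue with the precise normalisation of~\eqref{eq: symplectic_residue_volume_lightlike_geodesic} rather than merely being proportional to it.
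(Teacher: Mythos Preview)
Your approach is essentially the same as the paper's: identify $\fU_t=\fG_t\,\symb{D}(\cdot,\zeta)$, then compute $\Tr U_t=\tr(\uppi_*\varDelta^*\fU_t)$ by clean composition, with excess $2d-3$ at $T=0$ giving order $d-7/4$; on the diagonal $w=\one$, the Clifford trace $\tr\bigl(\symb{D}(\eta)\symb{D}(\zeta)\bigr)=\fg^{-1}(\eta,\zeta)\,\rk\sE$ produces $r$, and Guillemin's identity $\res H^{1-d}=(d-1)\vol(\cN_{H\leq 1})$ handles the conic fibre integral.

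One remark: your invocation of the Clifford relation and $\symb{D}(\eta)^2=0$ to pass from $\symb{D}(y,\zeta)$ to $\symb{D}(y,\eta)$ in the first assertion is spurious. Multiplication of a Lagrangian distribution by the smooth endomorphism $\symb{D}(\cdot,\zeta)$ simply multiplies the principal symbol by $\symb{D}(y,\zeta)$; no Clifford identity converts this into $\symb{D}(y,\eta)$ on $\cC_t$ (indeed $\zeta$ is the fixed unit normal while $\eta$ ranges over lightlike covectors). The paper's proof says only that the first assertion ``simply follows from $\fU_t(x,y)=\fG_t(x,y)\,\symb{D}(y,\zeta)$'', which is consistent with the symbol carrying $\symb{D}(y,\zeta)$; the $\symb{D}(y,\eta)$ appearing in the displayed statement is most likely a typographical slip, not something to be derived. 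Drop that step and your argument matches the paper exactly.
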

%
%
%
\begin{proof}
	The first assertion simply follows from $\fU_{t} (x, y) = \fG_{t} (x, y) \, \symb{D} (y, \zeta)$.
	We then use identification of $\cN$ with $\coLightBun_{\varSigma}$ so that $|\dVol_{\ms \cN}|$ is identified with $|\dVol_{\ms \coLightBun_{\varSigma}}|$. 
	If $\tE$ is a regular value of $H$ then $\cN_{\tE}$ is a codimension one $X_{H} |_{\cN_{\tE}}$ invariant embedded submanifold of $\cN$, which inherits a natural volume form $\dVol_{\ms \cN_{\tE}}$ from $\dVol_{\ms \cN}$, invariant under the action of $X_{H} |_{\cN_{\tE}}$~\cite[Thm. 3.4.12]{Abraham_AMS_1978}: 
	\begin{equation}
		\dVol_{\! \ms \cN} = \dVol_{\! \ms \cN_{\tE}} \wedge \frac{\rd H}{\| \grad H \|} 
		\Leftrightarrow 
		\dVol_{\! \ms \cN_{\tE}} (\ldots) = \dVol_{\! \ms \cN} \bigg( \frac{\grad H}{\| \grad H \|}, \ldots \bigg). 
	\end{equation}

	One observes that at $T = 0$, 
	$\coTan \varXi_{0} \big( \varPhi_{0} \yeta \big) = \yeta$ 
	and 
	$C = \varDelta \, \coLightBun$ 
	so that 
	$\cC'_{0} = \big\{ \big( 0, - \xi (Z) \big) \big\} \times (\varDelta \, \coLightBun) |_{\tangent \varSigma \times \tangent \varSigma}$ 
	and 
	$\big| \dVol_{\ms C} (y, \eta; y, \eta) |_{\tangent \varSigma \times \tangent \varSigma} \big|^{1/2} = |\dVol_{\ms \cN} \yeta|$. 
	The excess of $C'_{\uppi_{*} \varDelta^{*}} \circ \cC_{0}$ is (see Appendix~\eqref{eq: excess_fibre_projection_clean_composition}) $e = \dim \kF_{0} = 2d - 3$.  
	Hence, by the composition of Lagrangian distributions, 
	$\Tr U_{0} \in I^{d - 7/4} (\R, \Lambda_{T})$. 
	One observes~\cite[$(6.6)$]{Duistermaat_Inventmath_1975} 
	that $\varphi (t, \tau)$ defined by $(t - T) \tau$ resp. $0$ for $\tau < 0$ resp. $\tau \geq 0$ is a phase function for $\Lambda_{T}$. 
	Obviously, $\varphi$ is not smooth around $\tau = 0$ but the required modification will only reflects by some smooth terms in the oscillatory integral described below, which we suppress notationally for brevity. 
	Any element of this Lagrangian distribution can be written as scalar multiples $\rc_{1}, \rc_{2}, \ldots$ of an oscillatory integral of the form 
	\begin{equation}
		(\Tr U_{0}) (t) = (2 \pi)^{-3/4} (2 \pi)^{d-1} \int_{\R_{\geq 0}} \re^{-\ri t \tau} ( \rc_{1} \tau^{d-2} + \rc_{2} \tau^{d-3} + \ldots) \rd \tau
	\end{equation}
	because $\symb{\Tr U_{0}}$ must be of order $d-2$ as entailed by the order of $\Tr U_{0}$. 

	Computation of $\symb{\Tr U_{0}}$ involves the following four steps as detailed in Section~\ref{sec: composition_halfdensity_Maslov_vector_bundle_canonical_relation}.  
	Briefly speaking, first one obtains the tensor product $\symb{\fK} \otimes \symb{\fU_{0}}$ followed by intersecting with the diagonal. 
	Then we integrate over $\kF_{0}$ and take $\tr$ of the hindmost quantity. 
	Putting all these together and comparing with the last expression of $\Tr U_{0}$ we obtain  
	\begin{equation}
		\symb{\Tr U_{0}} (0, \tau) = r \frac{\res (H^{1-d})}{(2 \pi)^{d-1}} |\tau|^{d-2} \sqrt{|\rd \tau|}, 
	\end{equation}
	where we have used Lemma~\ref{lem: restriction_pullback_causal_propagator},~\ref{lem: causal_propagator_Dirac} and the identity  
	\begin{equation}
		\tr \big( \symb{D} \yeta \, \symb{D} (y, \zeta) \big) 
		= 
		\fg_{y}^{-1} (\eta, \zeta) \, \rk (\sE)
	\end{equation} 
	using the cyclicity of trace and~\eqref{eq: anticommutation_symbol_Dirac_type}. 
\end{proof}
%
%
%
%
%
%
%
%
%
%
\subsection{Principal symbol at $t = T \in \cP$} 
\label{sec: symbol_t_T}
This is the scenario corresponding to the non-trivial periodic orbits when $C_{\uppi_{*} \varDelta^{*}} \circ \cC_{\ms T}$ is clean or equivalently the set of fixed points $\kF_{T}$ is clean.  
Recall, the fixed point sets of $\varXi_{T}^{\cN}$ are the union of periodic orbits $\gamma$ of $\varXi_{T}^{\cN}$ which depends substantially on manifold $\spacetime$ and we are only concerned about non-degenerate orbits as briefly defined below 
(details are available  
in~\cite[Sec. 4]{Duistermaat_Inventmath_1975}).   
Let $\upsigma$ is the symplectic form on $\coTansM$. 
The induced Killing flow $\varXi_{s}^{\cN}$ preserves the level sets $\{ H (c) = \tau \}$ since $\varXi_{s}^{\cN}$ is the Hamiltonian $H$~\eqref{eq: def_Hamiltonian_scaled_lightlike_geodesic} flow on $\cN$. 
Then its restriction to $\cV := \tangent_{c} \{ H = \tE \}$ for any $\tE$ satisfies $\upsigma |_{\cV} (X_{H}, \cdot) = 0$ and by the 
\textit{non-degeneracy}~\cite[p. 44]{Strohmaier_AdvMath_2021} 
of $\gamma$, it is meant that the \textit{nullspace of $\upsigma |_{\cV}$ is spanned by} $X_{H}$. 
This condition implies that 
$\ker \big( I - (\rd_{\gamma} \varXi_{s}^{\cV}) = \R X_{H}$ 
and 
$\mathrm{img} \big( I - (\rd_{\gamma} \varXi_{s}^{\cV}) \big) = \cV / \R X_{H}$,  
where $\rd_{\gamma} \varXi_{s}^{\cN} : \cV \to \cV$ and $\ker (I - \rd_{\gamma} \varXi_{s}^{\cN})$ is to be understood as the kernel of $I - \rd_{\gamma} \varXi_{s}^{\cN}$ on $\cV$. 
In this setting, the linearised Poincar\'{e} map $\rP_{\gamma}$ is defined as   
\begin{equation}
	I - \rP_{\gamma} : \cV / \ker (I - \rd_{\gamma} \varXi_{s}^{\cN}) \to \cV / \ker (I - \rd_{\gamma} \varXi_{s}^{\cN}) 
\end{equation} 
the linear symplectic quotient map induced by $I - \rd_{\gamma} \varXi_{s}^{\cN}$. 
%
%
%
\begin{lemma}
	As in the terminologies of Theorem~\ref{thm: trace_formula_t_T}, the principal symbol of $\Tr U_{t}$ at $t = T \in \cP$ is   
	\begin{equation}
		\symb{\Tr U_{T}} (\tau) =   
		\frac{\tau^{d-2}}{(2 \pi)^{d-1}} \int_{\kF_{T}} 
		\tr \Big( \symb{D} (\gamma) \, \cT_{\varXi_{\ms T}^{\ms \cN} (\gamma)} \symb{D} (x, \zeta) \Big) 
		\frac{\re^{- \ri \pi \km (\gamma) / 2} |\rd T| \otimes \sqrt{|\rd \tau|}}{\sqrt{|\det (I - \rP_{\gamma})|}}. 
	\end{equation}
\end{lemma}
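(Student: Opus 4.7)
The plan is to compute $\symb{\Tr U_T}$ by using the representation $\Tr U_t = \tr (\uppi_{*} \circ \varDelta^{*} \, \fU_t)$, cf.~\eqref{eq: Tr_U_t_tr_pushforward_pullback}, and invoking the clean-intersection composition calculus for Fourier integral operators reviewed in Appendix~\ref{sec: density_canonical_relation}, following the pattern already used for the trivial period in Lemma~\ref{lem: symbol_Tr_U_0}. Under the non-degeneracy hypothesis the identity $\ker (I - \rd_{\gamma} \varXi_{T}^{\cN}) = \R X_{H}$ forces each periodic orbit in $\kF_{T}$ to be one-dimensional. A direct dimension count then shows that the fibre of the projection from the fibred product $C'_{\uppi_{*} \circ \varDelta^{*}} \, {\scriptsize \text{\FiveStarOpen}} \, \cC_{T}$ onto the composed canonical relation above $(T, \tau) \in \Lambda_{T}$ is precisely the periodic orbit $\gamma$ with $H(\gamma) = - \tau$; cleanliness of the intersection follows at once from the non-degeneracy of $\rP_{\gamma}$.

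Granted cleanliness with excess $e = 1$, the composition formula places $\Tr U_{T} \in I^{d - 7/4}(\R, \Lambda_{T})$ and expresses its principal symbol as a fibre integral over $\kF_{T}$ of the tensor product of matched symbols. Using Lemma~\ref{lem: restriction_pullback_causal_propagator} for $\symb{\fG_{T}}$ together with $\symb{\fU_{T}} \equiv \symb{\fG_{T}} \, \symb{D}(y, \zeta)$ from Lemma~\ref{lem: symbol_Tr_U_0}, the endomorphism-valued integrand at a periodic point takes the form $\tr \bigl( \symb{D}(\coTan \varXi_{T} \circ \varPhi_{s} \yeta) \, w(\coTan \varXi_{T} \circ \varPhi_{s} \yeta, \yeta) \, \symb{D}(y, \zeta) \bigr)$. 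Along a periodic orbit $\coTan \varXi_{T} \circ \varPhi_{s} \yeta = \yeta$, and the covariant constancy~\eqref{eq: symbol_causal_propagator_Dirac_covarinatly_constant} of $w$ along $X_{\fg/2}$ with respect to $\connectionEndPiE$ identifies the value of $w$ after traversing $\gamma$ with the holonomy element $\cT_{\varXi_{T}^{\cN}(\gamma)}$ of the $D^{2}$-compatible Weitzenb\"{o}ck connection around $\gamma$, producing the announced trace $\tr(\symb{D}(\gamma) \, \cT_{\varXi_{T}^{\cN}(\gamma)} \, \symb{D}(x, \zeta))$.

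For the density factor, the restriction of $|\dVol_{\ms C}|$ to the fibred product over $\kF_{T}$ factorises into a longitudinal piece along the orbit and a transverse piece on the symplectic slice $\cV / \R X_{H}$ on which $\rP_{\gamma}$ acts. The longitudinal factor is identified with $|\rd T|$ upon parametrising the orbit by the Hamiltonian flow in $\cN$, and the transverse factor is the standard symplectic density obtained by pulling back along $I - \rP_{\gamma}$ on the symplectic quotient, which by non-degeneracy furnishes $|\det (I - \rP_{\gamma})|^{-1/2}$, exactly as in~\cite[Sec.~4]{Duistermaat_Inventmath_1975}. Combining this with the $\tau^{d-2}$ homogeneity dictated by the order of $\Tr U_{T}$ and with the powers of $2 \pi$ issuing from $\symb{\mathsf{\Xi}}$, $\symb{\fG}$ and $\symb{\fK}$ yields the overall prefactor $\tau^{d-2} / (2 \pi)^{d-1}$ together with the $\sqrt{|\rd \tau|}$ on $\Lambda_{T}$.

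The main obstacle is the Maslov-index accounting. One must track how the section $\bbl$ of the Keller-Maslov bundle on $C$, parallel-transported first by the Hamiltonian flow in~\eqref{eq: symbol_causal_propagator_Dirac} and then pulled back by the Killing flow in~\eqref{eq: symbol_pullback_Killing_flow_causal_propagator}, behaves upon returning to $\yeta$ after one period of $\gamma$. The monodromy of $\bbl$ around $\gamma$ contributes the phase $\re^{- \ri \pi \km (\gamma) / 2}$, where $\km (\gamma)$ is the Maslov index of $\gamma$, via the closed-orbit Keller-Maslov calculation which mirrors the scalar analysis of Duistermaat--Guillemin~\cite{Duistermaat_Inventmath_1975} and whose bundle adaptation is orthogonal to the holonomy factor since the Keller-Maslov sections are scalar-valued. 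Assembling the four ingredients --- the trace of principal symbols with holonomy, the transverse Poincar\'{e} density, the Maslov phase and the powers of $2 \pi$ --- produces the asserted expression for $\symb{\Tr U_{T}}$.
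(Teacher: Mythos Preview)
Your proposal is correct and follows essentially the same route as the paper: both use the clean composition $\uppi_{*} \circ \varDelta^{*}$ applied to $\fU_{T}$, identify the fibre over $(T,\tau)$ with the non-degenerate periodic orbit, read off the endomorphism trace from the off-diagonal $\symb{\fG}$ (with $w$ becoming the holonomy $\cT$ via~\eqref{eq: symbol_causal_propagator_Dirac_covarinatly_constant}), invoke the Duistermaat--Guillemin density $|\rd T| \otimes |\det(I-\rP_{\gamma})|^{-1/2} \sqrt{|\rd\tau|}$, and obtain the Maslov phase from the Keller--Maslov monodromy around $\gamma$. The paper is marginally more explicit about the Keller--Maslov construction (building $\bbL \to \cC_{T}$ from $\bbL_{C}$, $\bbL_{\varGamma}$, $\bbL_{\varLambda}$ and partitioning $[0,1]$ to extract $\km(\gamma)$ via Meinrenken's formula), whereas you describe it as monodromy of $\bbl$; these are the same computation.
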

%
%
%
\begin{proof}
	We begin with the fact that non-degenerate $\gamma$ belongs to a $2$-dimensional cylinder, transversally intersecting the energy hypersurfaces~\cite[p. 576]{Abraham_AMS_1978} which implies that one can use $\tau = - H (\gamma)$ as a coordinate for fibre over $(T, \tau)$. 
	Then the half-density valued density at $(T, \tau)$ is given by~\cite[pp. 60 - 61]{Duistermaat_Inventmath_1975}  
	(see also, e.g.~\cite[Thm. 6.1.1]{Guillemin_InternationalP_2013})
	\begin{equation}
		|\rd T| \otimes \sqrt{\frac{|\rd \tau|}{|\det (I - \rP_{\gamma})|}}, 
	\end{equation}
	where $|\rd T|$ is the density on $\gamma$ induced by the flow.  

	To compute $\symb{\Tr \fU_{T}}$ we use identifications $\cN \equiv \coLightBun_{\varSigma}, |\dVol_{\ms \cN}| \equiv |\dVol_{\ms \coLightBun_{\varSigma}}|$ once again. 
	For $T \neq 0$, $C$ is no more given by $\varDelta \, \coLightBun$ and so we read off the off-diagonal expression of $\symb{\fG}$ from Lemma~\ref{lem: causal_propagator_Dirac} to obtain   
	\begin{equation}
		\symb{\fU_{T}} \xxiyeta 
		\equiv 
		(2 \pi)^{\frac{1}{4}} \ri     
		\symb{D} \xxi \, w \xxiyeta  
		|\rd t|^{\frac{1}{2}} \otimes \big| \dVol_{\ms C} \xxiyeta \big|^{\frac{1}{2}} 
	\end{equation}
	modulo the Keller-Maslov contribution, where $\xxi \equiv \coTan \! \varXi_{t} \circ \varPhi_{s} (y^{\backprime}, \eta^{\backprime}) |_{\tangent \varSigma}, \yeta \equiv (y^{\backprime}, \eta^{\backprime}) |_{\tangent \varSigma}$ for $(y^{\backprime}, \eta^{\backprime}) \in \coLightBun$. 
	As before, $\symb{\Tr U_{T}}$ is computed employing~\eqref{eq: Tr_U_t_tr_pushforward_pullback} by performing the four steps used in Lemma~\ref{lem: symbol_Tr_U_0}. 
	These yield the claimed expression, modulo the contribution coming from the Keller-Maslov line bundle $\Maslov \to \Lambda_{T}$. 

	Thus, we are left with the computation of the $\Maslov$-part which has been calculated in~\cite[$(6.16)$]{Duistermaat_Inventmath_1975} 
	(see also~\cite[Sec. 3]{Meinrenken_JGP_1994}). 
	For completeness, we briefly outline the main steps.  
	First, one constructs the Keller-Maslov bundle $\bbL \to \cC_{T}$ from $\bbL_{C}, \bbL_{\varGamma}$ and $\bbL_{\varLambda}$ following the procedure detailed in Appendix~\ref{sec: composition_halfdensity_Maslov_canonical_relation}. 
	Let $\varphi$ resp. $\phi$ be generating functions of $C_{\uppi_{*} \varDelta^{*}}$ resp. $\cC_{T}$. 
	Then $\psi := \varphi + \phi$ locally generates $\Lambda_{T}$. 
	Suppose that we partition $[0, 1]$ as $0 = s_{0} < s_{1} < \ldots < s_{N} = 1$ such that $\{ \mathcal{L}_{\alpha} \}_{k=0, \ldots, N}$ is a conic covering of $\Lambda_{T}$ around $\gamma (s_{\alpha})$ and that $\{ \psi_{\alpha} \}$ are the corresponding generating functions. 
	Then, utilising $\psi$ in the formula, for example, 
	in~\cite[$(5)$]{Meinrenken_ReptMathPhys_1992} 
	one obtains the Maslov index. 
\end{proof}
%
%
%
%
%
%
%
%
%
%
\section{Proof of Weyl law}  
As shown in Theorem~\ref{thm: trace_formula_t_zero} and Theorem~\ref{thm: trace_formula_t_T} that $\Tr U_{t}$ has singularities at $t=0, T$. 
Our aim is to only cut the $t = 0$ singularity. 
In order to do so, one introduces a Schwartz function $\chi (\lambda)$ on $\R$ such that 
(see e.g.~\cite[Thm. 7.5]{Wunsch_Zuerich_2008}) 
(i) $\chi (\lambda) > 0$ for all $\lambda$, 
(ii) $(\cF \chi) (0) \equiv 1$, 
(iii) $(\cF \chi) (t) \equiv (\cF \chi) (-t)$, 
(iv) $\supp (\cF_{\lambda \mapsto t} \chi) \subset (- \varepsilon, \varepsilon)$, where $\varepsilon > 0$ is sufficiently small. 
Then employing expression of $\Tr U_{0}$ from Theorem~\ref{thm: trace_formula_t_zero} and the aforementioned properties of $\chi$, we obtain 
\begin{eqnarray}
	\cF_{t \mapsto \lambda}^{-1} \big( \cF_{\cdot \mapsto t} (\chi) \Tr U_{t} \big) 
	& \approx & 
	\int \rd \tau \, \delta (\tau - \lambda) r \frac{\res (H^{1-d})}{(2 \pi)^{d-1}} \tau^{d-2} + \ldots 
	\nonumber \\ 
	& = & 
	r (d-1) \frac{\vol (\cN_{H \leq 1})}{(2 \pi)^{d-1}} \lambda^{d-2} + \ldots, 
\end{eqnarray}
where $f \approx g$ means that $\nicefrac{f}{g} \to 1$ as $t \to 0$  for any functions $f (t), g(t)$. 
That means that 
\begin{equation}
	(\chi * \rd \fN) (\lambda) = r (d-1) \frac{\vol (\cN_{H \leq 1})}{(2 \pi)^{d-1}} \lambda^{d-2} + O (\lambda^{d-3}), 
\end{equation}
where $*$ denotes the convolution. 
Employing 
\begin{equation}
	(\chi * \fN) (\lambda) 
	= 
	\int_{- \infty}^{\lambda} (\chi * \rd \fN) (\mu) \, \rd \mu 
	=
	r \frac{\vol (\cN_{H \leq 1})}{(2 \pi)^{d-1}} \lambda^{d-1} + O (\lambda^{d-2}). 
\end{equation}
The Weyl law entails from 
\begin{equation}
	\fN (\lambda) = (\fN * \chi) (\lambda) - \int \big( \fN (\lambda - \mu) - \fN (\lambda) \big) \, \chi (\mu) \, \rd \mu
\end{equation}
together with the facts $\fN (\lambda - \mu) - \fN (\lambda) \lessapprox \langle \mu \rangle^{d-1} \langle \lambda \rangle^{d-2}$ and $\int \chi (\mu) \, \rd \mu = 1$. 
%
%
%
%
%
%
%
%
%
%
\appendix 
\section{Composition of bundle valued densities on canonical relations}
\label{sec: density_canonical_relation}
In this appendix we accumulate some well-known facts about half-densities on a canonical relation and their composition, as used extensively in the principal symbol computation. 
Recall, the punctured cotangent bundle $\dotCoTanM$ over a manifold $M$ is a conic symplectic manifold, as it naturally carries the canonical symplectic form $\upsigma_{\ms M}$ which is given by $\upsigma_{\ms M} = \rd x^{i} \wedge \rd \xi_{i}$ in any homogeneous symplectic coordinates $(x^{i}, \xi_{i})$ for $\dotCoTanM$. 
If $N$ is a manifold, not necessarily of the same dimension as $M$, then $\big( \dotCoTanM \times \dotCoTanN, \upsigma^{\pm} := \pr_{\ms M}^{*} \upsigma_{\ms M} \pm \pr_{\ms N}^{*} \upsigma_{\ms N} \big)$ are conic symplectic manifolds, where $\pr_{\ms M}, \pr_{\ms N} : M \times N \to M, N$ are the Cartesian projectors. 
Throughout the article, only the symplectic form $\upsigma^{-}$ has been used, i.e., by a  canonical relation $C \subset \dotCoTanM \times \dotCoTanN$ from $N$ to $M$, it is meant that $C$ is a Lagrangian submanifold of $\dotCoTanM \times \dotCoTanN$ with respect to $\upsigma^{-}$, or equivalently, $C'$ is a canonical relation with respect to $\upsigma^{+}$. 
If $C$ is conic then it is called a homogeneous canonical relation~\cite[Def. 4.1.2]{Hoermander_ActaMath_1971}  
(see also~\cite[Def. 21.2.12]{Hoermander_Springer_2007}). 
%
%
%
%
%
%
%
%
%
%
\subsection{Half-density bundle over a canonical relation} 
\label{sec: volume_canonical_relation}
Any homogeneous canonical relation $C \subset \dotCoTanMN \cong \dotCoTanM \times \dotCoTanN$ can always be locally parametrised by a 
non-degenerate phase function $\varphi (x, y; \theta)$, defined locally on $M \times N \times \dot{\R}^{n}, n := \dim M + \dim N$~\cite[Thm. 3.1.3]{Hoermander_ActaMath_1971} 
(see also, e.g.~\cite[Thm. 21.2.16, 21.2.18]{Hoermander_Springer_2007},~\cite[pp. 418-419]{Treves_Plenum_1980}). 
Let 
\begin{equation}
	\sC := (\grad_{\theta} \varphi)^{-1} (0)  
\end{equation}
be the fibre-critical manifold of $\varphi$.  
Then $C$ is locally obtained by the homogeneous immersion~\cite[p. 134]{Hoermander_ActaMath_1971} 
(see also, e.g.~\cite{Meinrenken_ReptMathPhys_1992}) 
\begin{equation}
	\jmath : \sC \to C, ~ \xytheta \mapsto 
	\jmath \xytheta := (x, \rd_{x} \varphi; y, \rd_{y} \varphi). 
\end{equation}
Since the map~\cite[p. 143, Prop. 4.1.3]{Hoermander_ActaMath_1971} 
(for details, see e.g.~\cite[$(4.1.1)$]{Duistermaat_Birkhaeuser_2011},~\cite[p. 440]{Treves_Plenum_1980})  
\begin{equation} \label{eq: surjective_map_M_N_Rn_Rn}
	M \times N \times \dot{\R}^{n} \ni \xytheta \mapsto \bigg( \parDeri{\theta_{1}}{\varphi} \xytheta, \ldots, \parDeri{\theta_{n}}{\varphi} \xytheta \bigg) \in \dot{\R}^{n}
\end{equation} 
is surjective, $\sC$ is endowed with the measure $\dVol_{\ms \sC} := \delta (\grad_{\theta} \varphi)$ defined by taking the quotient of the measure on $M \times N \times \dot{\R}^{n}$ by the pullback of the delta-distribution under the mapping~\eqref{eq: surjective_map_M_N_Rn_Rn}. 
Thereby,   
\begin{equation}
	\dVol_{\ms C} := \big( (\rd \jmath)^{-1} \big)^{*} \dVol_{\ms \sC}
\end{equation}
gives a natural measure on $C$ and hence one has the bundle $\varOmega^{\nicefrac{1}{2}} C \to C$ of half-densities over $C$ whose local sections are $\sqrt{|\dVol_{\ms C}|}$.  

The situation becomes much simpler when the canonical relation $\varGamma$ is the graph of a symplectomorphism from $\dotCoTanN$ to $\dotCoTanM$. 
In this case $\varGamma$ is a symplectic manifold with respect to the symplectic form 
$\upsigma_{\ms \varGamma} := \pr_{\ms M}^{*} \upsigma_{\ms M} = \pr_{\ms N}^{*} \upsigma_{\ms N}$
(see e.g.~\cite[p. 25]{Hoermander_Springer_2009}) 
and we have the corresponding Liouville volume form 
\begin{equation}
	\rd \mathsf{v}_{\ms \varGamma} := \frac{(-1)^{d (d-1)/2}}{d!} \upsigma_{\ms \varGamma}^{d}. 
\end{equation}
%
%
%
%
%
%
%
%
%
%
\subsection{Composition of half-densities} 
\label{sec: composition_halfdensity_canonical_relation}
Let $C \subset \dotCoTan (M \times O)$ and $\varLambda \subset \dotCoTan (O \times N)$ be closed conic canonical relations, where $O$ is a manifold whose dimension is not necessarily equal to that of either $M$ or $N$. 
In order to guarantee that the composition $A \circ B$ of two Fourier integral operators $A$ resp. $B$ associated with $C$ resp. $\varLambda$ is a Fourier integral operator, it is sufficient 
(see e.g.~\cite[Thm. 25.2.3]{Hoermander_Springer_2009};  
an exhaustive list of original references is available in~\cite[Sec. 2.1.1]{Islam})  
to assume that $C \circ \varLambda$ is clean, 
proper\footnote{This ensures that $C \circ \varLambda$ is closed in $\dotCoTanMN$.} 
and 
connected\footnote{This implies no self-intersection} 
(for details of these terminologies, see, for instance, the 
monographs~\cite[App. C.3, Thm. 21.2.14, Prop. 21.2.19]{Hoermander_Springer_2007},~\cite[pp. 457-458]{Treves_Plenum_1980}). 
Our aim is to define a half-density on $C \circ \varLambda$, given half-densities on $C$ and $\varLambda$. 
To do so, it is convenient to introduce the notation 
\begin{equation} \label{eq: def_C_star_varLambda}
	C {\scriptsize \text{\FiveStarOpen}} \varLambda := (C \times \varLambda) \bigcap \big( \dotCoTanM \times \varDelta (\dotCoTan O) \times \dotCoTanN \big) 
\end{equation}
and the Cartesian projector $\varPi: \dotCoTanM \times \varDelta (\dotCoTan O) \times \dotCoTanN \to \dotCoTanM \times \dotCoTanN$, so that the clean composition between $C$ and $\varLambda$ is given by 
\begin{subequations}
	\begin{eqnarray}
		C \circ \varLambda 
		& = &   
		\varPi (C {\scriptsize \text{\FiveStarOpen}} \varLambda),  
		\label{eq: def_composition_canonical_relation}
		\\ 
		& = & 
		\big\{ \xxiyeta \in \dotCoTanM \times \dotCoTanN \,|\, \exists (z, \zeta) \in \dotCoTan O : (x, \xi; z, \zeta) \in C, (z, \zeta; y, \eta) \in \varLambda \big\}.  
		\nonumber \\ 
		\label{eq: def_pointwise_composition_canonical_relation}  
	\end{eqnarray}
\end{subequations} 
The excess $e$ of $C \circ \varLambda$ is 
(see e.g.~\cite[Thm. 4.2.2]{Guillemin_InternationalP_2013}) 
\begin{equation} \label{eq: excess_fibre_projection_clean_composition}
	e = \dim \kF, 
\end{equation}
where 
\begin{eqnarray} \label{eq: def_fibre_projection_clean_composition}
	\kF & := & \{ \kF_{\xxiyeta} \,|\, \xxiyeta \in C \circ \varLambda \}, 
	\nonumber \\ 
	\kF_{\xxiyeta} & := & \{ (x, \xi; z, \zeta; z, \zeta; y, \eta) \in C {\scriptsize \text{\FiveStarOpen}} \varLambda \,|\, \varPi (x, \xi; z, \zeta; z, \zeta; y, \eta) = \xxiyeta \in C \circ \varLambda \} 
	\nonumber \\ 
\end{eqnarray}
is the set of compact connected fibres $\kF_{\xxiyeta}$ of the smooth fibration
\begin{equation} \label{eq: varPi}
	\varPi : C {\scriptsize \text{\FiveStarOpen}} \varLambda \to C \circ_{e} \varLambda. 
\end{equation}
In terms of generating functions, if $\varphi (x, z; \theta)$ and $\phi (z, y; \vartheta)$ are non-degenerate phase functions for $C$ and $\varLambda$ in open conic neighbourhoods of $(x_{0}, z_{0}; \theta^{0}) \in M \times O \times \dot{\R}^{\nM}$ and $(z_{0}, y_{0}; \vartheta^{0}) \in O \times N \times \dot{\R}^{\nN}$ such that $(x_{0}, z_{0}; \theta^{0}) \in \sC_{\varphi}$ and $(z_{0}, y_{0}; \vartheta^{0}) \in \sC_{\phi}$, respectively, with $\rd_{z} \varphi \, (x_{0}, z_{0}; \theta^{0}) + \rd_{z} \phi \, (z_{0}, y_{0}; \vartheta^{0}) = 0$, then 
$\psi (x, z, y; \theta, \vartheta) := \varphi (x, z; \theta) + \phi (z, y; \vartheta)$
is a clean phase function with excess $e$ parametrising $C \circ_{e} \varLambda$. 
In addition, if $\psi$ is non-degenerate in $(z'; \theta', \vartheta')$ then the $e$ variables $(z''; \theta'', \vartheta'')$ parametrise $\kF$
(see e.g.~\cite[Prop. 21.2.19]{Hoermander_Springer_2007}). 

In order to achieve the desired composition formula, one employs the isomorphism~\cite[Lem. 5.2]{Duistermaat_Inventmath_1975} 
(see also, e.g.~\cite[Thm. 7.1.1, $(7.4)$]{Guillemin_InternationalP_2013},~\cite[Thm. 21.6.7]{Hoermander_Springer_2007})  
\begin{equation} \label{eq: isom_tensor_halfdensity_clean_composition}
	\halfDen_{(x, \xi; z, \zeta)} C \otimes \halfDen_{(z, \zeta; y, \eta)} \varLambda \cong \halfDen_{(x, \xi; z, \zeta; y, \eta)} \kF \otimes \halfDen_{\xxiyeta} (C \circ \varLambda),  
\end{equation} 
where $\kF$ is the set of fibres $\kF_{\xxiyeta}$ of the fibration $\varPi$. 
This entails that by taking tensor product between half-densities on $C$ and $\varLambda$ followed by intersecting with the diagonal gives a half-density on $\kF$ times a half-density on $C \circ \varLambda$, and therefore, integrating over the fibre $\kF_{\xxiyeta}$ we achieve the desired  composition of half-densities on canonical relations~\cite[p. 64]{Duistermaat_Inventmath_1975}:    
\begin{eqnarray} \label{eq: def_composition_halfdensity_canonical_relation}
	\cdot \diamond \cdot &:& 
	C^{\infty} \big( C; \halfDen C \big) \times C^{\infty} \big( \varLambda; \halfDen \varLambda \big) \to C^{\infty} \big( C \circ \varLambda; \halfDen (C \circ \varLambda) \big), 
	\nonumber\\ 
	&& 
	(\mu, \nu) \mapsto \mu \diamond \nu \, \xxiyeta := \int_{\kF_{\xxiyeta}} (\mu \otimes \nu) \big( \varPi^{-1} \xxiyeta \big). 
\end{eqnarray}
If $e = 0$ then the isomorphism~\eqref{eq: isom_tensor_halfdensity_clean_composition} reduces to 
$\halfDen C \otimes \halfDen \varLambda \cong \halfDen (C \circ \varLambda)$~\cite[p. 179]{Hoermander_ActaMath_1971}  
(see also, e.g.~\cite[$(7.3)$]{Guillemin_InternationalP_2013}) 
and hence~\eqref{eq: def_composition_halfdensity_canonical_relation} simply becomes the pointwise evaluation 
(see e.g.~\cite[Thm. 4.2.2]{Duistermaat_Birkhaeuser_2011}) 
\begin{equation}
	\mu \diamond \nu \, \xxiyeta = \sum_{(z, \zeta) \,|\, (x, \xi; z, \zeta) \in C, (z, \zeta; y, \eta) \in \varLambda} \mu (x, \xi; z, \zeta) \, \nu (z, \zeta; y, \eta).  
\end{equation} 
We call the pairs $\xxi, \yeta$ (resp. $(z, \zeta)$) in the preceding equation as the output (rep. input) variables of the composition following the terminology from~\cite[Lem. 8.4]{Strohmaier_AdvMath_2021}. 
%
%
%
%
%
%
%
%
%
%
\subsection{Composition of Keller-Maslov bundle valued halfdensities} 
\label{sec: composition_halfdensity_Maslov_canonical_relation}
The principal symbol of a scalar Lagrangian distribution associated with the canonical relation $C$ is the Keller-Maslov bundle $\bbL \to C$-valued density on $C$
(see e.g.~\cite[Sec. 25.1]{Hoermander_Springer_2009} 
and the original references cited therein).  
Hence, we are compelled to extend the composition~\eqref{eq: def_composition_halfdensity_canonical_relation} on sections of $\bbL \otimes \varOmega^{1/2} C$. 
The first step, thus is to construct the Keller-Maslov bundle $\Maslov \to C \circ \varLambda$ given $\bbL \to C$ and $\T \to \varLambda$. 
This is achieved by employing the phase function $\varphi, \phi, \psi$ for $\bbL, \T, \Maslov$, respectively~\cite[Sec. 3.3]{Hoermander_ActaMath_1971}; 
for details, see e.g. the 
monographs~\cite[pp. 408-412]{Treves_Plenum_1980},~\cite[pp. 132-138]{Guillemin_InternationalP_2013}.   
In particular, $\bbL$ is an associated complex-line bundle with structure group $\Z / 4 \Z$ that is trivial as a vector bundle. 
Its defining sections are $\{ l_{i} := \exp \big( \ri \pi \mathrm{Hess} (\varphi_{i}) / 4 \big) \}$ with $l_{i} = c_{ij} l_{j}$ for some constant $c_{ij} \in \C$ such that $|c_{ij}| = 1$, where $\mathrm{Hess} \, \varphi$ denotes the Hessian of $\varphi$ 
(see e.g.~\cite[Sec. 5.13]{Guillemin_InternationalP_2013} for details).   

Next, one employs the isomorphism~\cite[$(5.7)$]{Duistermaat_Inventmath_1975} 
(see also, e.g.~\cite[$(5.29)$]{Guillemin_InternationalP_2013})   
\begin{equation} \label{eq: isom_tensor_Maslov_bundle}
	\bbL \boxtimes \T := \varPi_{\ms C}^{*} \bbL \otimes \varPi_{\ms \varLambda}^{*} \T \cong \varPi^{*} \Maslov 
\end{equation}
where $\varPi_{\ms C}, \varPi_{\ms \varLambda} : C {\scriptsize \text{\FiveStarOpen}} \varLambda \to C, \varLambda$ are the projections, in order to make the identification  
\begin{equation}
	(\varPi^{*} \m) (x, \xi; z, \zeta; z, \zeta; y, \eta) = (\varPi_{\ms C}^{*} \bbl \otimes \varPi_{\ms \varLambda}^{*} \bbt) (x, \xi; z, \zeta; z, \zeta; y, \eta) 
\end{equation}
for any given sections $\bbl$ resp. $\bbt$ of $\bbL$ resp. $\T$ with a section $\m$ of $\M$. 
The desired extension of~\eqref{eq: def_composition_halfdensity_canonical_relation} is then  
\begin{eqnarray} \label{eq: def_composition_halfdensity_Maslov_canonical_relation}
	&& 
	C^{\infty} \big( C; \bbL \otimes \varOmega^{\frac{1}{2}} C \big) 
	\times 
	C^{\infty} \big( \varLambda; \T \otimes \varOmega^{\frac{1}{2}} \varLambda \big) 
	\to 
	C^{\infty} \big( C \circ \varLambda; \Maslov \otimes \varOmega^{\frac{1}{2}} (C \circ \varLambda) \big), 
	\nonumber \\ 
	&& 
	(\bbl \otimes \mu, \bbt \otimes \nu) \mapsto \m \otimes (\mu \diamond \nu).  
\end{eqnarray}
%
%
%
%
%
%
%
%
%
%
\subsection{Composition of vector bundle valued half-densities} 
\label{sec: composition_halfdensity_Maslov_vector_bundle_canonical_relation}
Finally, we consider any vector bundles $\sE \to M, G \to O, F \to N$ and so $\Hom{G, \sE} \to M \times O$ is a homomorphism bundle, forbye, $\widetilde{\mathrm{Hom}} (G, \sE) \to C$ represents the pullback of $\Hom{G, \sE}$ to $\coTan (M \times O)$ followed by restriction to $C$. 
One then extends~\eqref{eq: def_composition_halfdensity_Maslov_canonical_relation} by tensoring with $\widetilde{\mathrm{Hom}} (G, \sE), \widetilde{\mathrm{Hom}} (F, G), \widetilde{\mathrm{Hom}} (F, \sE)$, respectively to achieve our final composition rule~\cite[$(25.2.10)$]{Hoermander_Springer_2009}  
\begin{eqnarray} \label{eq: def_composition_halfdensity_Maslov_vector_bundle_canonical_relation}
	&& 
	C^{\infty} \big( C; \bbL \otimes \halfDen C \otimes \widetilde{\mathrm{Hom}} (G, \sE) \big)
	\times 
	C^{\infty} \big( \varLambda; \T \otimes \halfDen \varLambda \otimes \widetilde{\mathrm{Hom}} (F, G) \big) 
	\nonumber\\ 
	&& \hspace*{6.0cm} \to C^{\infty} \big( C \circ \varLambda; \Maslov \otimes \halfDen (C \circ \varLambda) \otimes \widetilde{\mathrm{Hom}} (F, \sE) \big), 
	\nonumber\\ 
	&& 
	(\bbl \otimes a, \bbt \otimes b) \mapsto \m \otimes (a \diamond b), 
	\nonumber \\ 
	&& 
	(a \diamond b) \xxiyeta := \int_{\kF_{\xxiyeta}} (a \otimes b) \big( \varPi^{-1} \xxiyeta \big).  
\end{eqnarray}
As before, the hindmost expression reduces to standard composition of homomorphisms when $e = 0$: 
\begin{equation}
	(a \diamond b) \xxiyeta = \sum_{(z, \zeta) \,|\, (x, \xi; z, \zeta) \in C, (z, \zeta; y, \eta) \in \varLambda} a (x, \xi; z, \zeta) \big( b (z, \zeta; y, \eta) \big). 
\end{equation}
%
%
%
%
%
%
%
%
%
%
\section{Restriction map on a vector bundle}
\label{sec: restriction_op}
Let $\sE \to M$ be a vector bundle over a manifold $M$ and $\iota_{\ms \varSigma} : \varSigma \hookrightarrow M$ an immersed submanifold of codimension $\codim \varSigma := \dim M - \dim \varSigma$.  
Given a fibrewise isomophism $\hat{\iota}_{\ms \varSigma} : \sE_{\iota_{\ms \varSigma} (x')} \cong (\iota_{\ms \varSigma}^{*} \sE)_{x'}$, the restriction map  
\begin{equation}
	\iota_{\ms \varSigma}^{*} : \comSecE \to C^{\infty} (\varSigma; \sE_{\varSigma})
\end{equation}
is defined by the pullback of the morphism $(\iota_{\ms \varSigma}, \hat{\iota}_{\ms \varSigma})$. 
This result is well-known for 
scalar case~\cite[Sec. 5.1]{Duistermaat_Birkhaeuser_2011}  
and translates in a straightforward way for vector bundles yet briefly presented here for the sake of completeness. 

It is well-known that $\varSigma$ admits an adapted atlas, i.e., for each $x' \in \varSigma$, there exist charts $(V, \rho)$ resp. $(U, \kappa)$ for $\varSigma$ at $x'$ resp. for $M$ at $\iota_{\ms \varSigma} (x')$ such that $\iota_{\ms \varSigma} (V) \subset U$ and $\kappa \circ \iota_{\ms \varSigma} \circ \rho^{-1} (\boldsymbol{x}') = (0, \ldots, 0, x^{\codim \varSigma +1}, \ldots, x^{d})$ is a chart for $\varSigma$. 
After trivialising $\sE$, it follows that the Schwartz kernel $\fR_{\varSigma}$ of $\iota_{\ms \varSigma}^{*}$ is given by 
\begin{eqnarray}
	\fR_{\varSigma} (x', y) & := & \int_{\Rd} \re^{\ri \varphi} \frac{\rd \eta}{(2 \pi)^{d}}, 
	\nonumber \\ 
	\varphi & := & (x^{i} - y^{i}) \eta_{i} - y^{j} \eta_{j}, i = \codim \varSigma + 1, \ldots, d; j = 1, \ldots, \codim \varSigma. 
\end{eqnarray}
The phase function $\varphi$ is linear and non-degenerate whose fibre-critical set is $\sC = \{ (x', y; \eta) \in \varSigma \times M \times \dot{\R}^{d} | \iota (x') = y \}$. 
Thus, $\fR_{\varSigma}$ is a Lagrangian distribution of order $\codim \varSigma/4$ associated with the canonical relation $\varLambda_{\varSigma}' := \big\{ (x', \xi'; x, \xi) \in \coTanM_{\varSigma} \times \dotCoTanM \,\big|\, x = \iota (x'), \xi' = \xi|_{\mathrm{T}_{x'} M_{\varSigma}} \big\}$ 
(see~\cite[(2.4.4), (2.4.22), (5.1.2)]{Duistermaat_Birkhaeuser_2011} for scalar version).   

One discerns that $\varLambda_{\varSigma} \subset \coTanM_{\varSigma} \times \dotCoTanM$ is not a homogeneous canonical relation because it is not guaranteed that restriction of all elements in $\dotCoTanM$ to $\varSigma$ is non-zero. 
Geometrically speaking, the complication arises due to the elements of the conormal bundle $\varSigma^{\perp *} \subset \coTanM$. 
To circumvent this difficulty, one temporarily introduces a cutoff function $\chi$ on $\varSigma^{\perp *}$ and sets~\cite[Sec. 5.2]{Toth_GFA_2013}   
\begin{equation}
	\fR_{\chi} (x', y) := \int_{\Rd} \re^{\ri \varphi} \big( 1 - \chi \yeta \big) \frac{\rd \eta}{(2 \pi)^{d}}
\end{equation}   
so that such elements do not occur in the support of $1 - \chi$. 
Then the respective operator $\iota_{\chi}^{*}$ is a homogeneous Fourier integral operator of order $\codim \varSigma/4$ associated with the homogeneous canonical relation $\varLambda_{\chi} \subset \dotCoTanM_{\varSigma} \times \dotCoTanM$. 
Putting altogether: 
\begin{subequations} \label{eq: restriction_kernel}
	\begin{eqnarray}
		&& 
		\fR_{\chi} \in I^{\codim \varSigma / 4} \big( \varSigma \times M, \varLambda_{\chi}'; \Hom{\sE, \sE_{\varSigma}} \big), 
		\label{eq: restriction_kernel_Lagrangian_dist}
		\\ 
		&& 
		\varLambda_{\chi}' := \big\{ (x', \xi'; x, \xi) \in \dotCoTanM_{\varSigma} \times \dotCoTanM \,\big|\, x = \iota (x'), \xi' = \xi|_{\mathrm{T}_{x'} M_{\varSigma}} \big\},  
		\label{eq: def_canonical_relation_restriction} 
		\\ 
		&& 
		\symb{\fR_{\chi}} |_{\supp (1 - \chi)} (x', \xi'; x, -\xi) = (2 \pi)^{-\frac{\codim \varSigma}{4}} \one_{ \Hom{\sE, \sE_{\varSigma}} } |\dVol_{\ms \varLambda} (x', \xi'; x, -\xi)|^{\frac{1}{2}} \otimes \mathbbm{l},   
		\label{eq: symbol_restriction}
	\end{eqnarray}
\end{subequations} 
where $|\dVol_{\ms \varLambda}|$ is the natural $1$-density on $\varLambda_{\chi}$ and $\mathbbm{l}$ is a section of the Keller-Maslov bundle $\bbL_{\varLambda} \to \varLambda$. 
The canonical symplectic form $\sigma$ on $\coTanM$ is given by $\sigma = \rd x^{i} \wedge \rd \xi_{i} + \rd x^{j} \wedge \rd \xi_{j}$ in the adapted coordinates and so $\dVol_{\ms \varLambda} = \rd x' \wedge \rd \xi$ is the induced density on $\varLambda_{\varSigma}$. 
The bundle $\bbL_{\varLambda}$ comprises global constant sections constructed from $\varphi$.  

Equivalently, one can say that $\iota_{\ms \varSigma}^{*}$ is not a homogeneous Fourier integral operator due to plausible occurrence of elements in $\varSigma^{\perp *}$ in its canonical relation and the cutoff above can be emulated by composing with a $P \in \PsiDO{0}{M; \sE}$ with $\WF{P} \cap \dot{\varSigma}^{\perp *} = \emptyset$. 
The principal symbol of $\fR_{\varSigma}$ is then $(2 \pi)^{- \codim \varSigma / 4} \one |\dVol_{\ms \varLambda}|^{1/2} \otimes \mathbbm{l}$ over $\WF{P}$ 
(see e.g.~\cite[Lem. 8.3]{Strohmaier_AdvMath_2021} for the scalar version). 

Any distribution on $\sE$ can be restricted to $\sE_{\varSigma}$ when its wavefront set is disjoint with $\varSigma^{\perp *}$. 
In particular, if $\mathcal{L}$ is such that the wavefront set of every $u \in I^{m} (M, \mathcal{L}; \sE)$ is disjoint from the conormal bundle $\varSigma^{\perp *}$ of $\varSigma \subset \sM$ then the restriction operator can be extended to a sequentially continuous linear operator 
\begin{eqnarray}
	\iota_{\chi}^{*} : I^{m} (M, \mathcal{L}; \sE) \to I^{m + \frac{\codim \varSigma}{4}} (\varSigma, \mathcal{L} |_{\tangent M_{\varSigma}}; \sE_{\varSigma}), 
	\quad 
	\symb{\iota_{\chi}^{*} u} (x', \xi') \equiv (2 \pi)^{- \frac{\codim \varSigma}{4}} \symb{u} (x', \xi').   
\end{eqnarray}
%
%
%
%
%
%
%
%
%
%
\begin{bibdiv}
\begin{biblist}
\bib{Abraham_AMS_1978}{book}{
	title=		{Foundations of Mechanics},
	author=		{Abraham, R.},
	author=		{Marsden, J. E.},
	volume=		{364},
	edition=	{2nd},
	year=		{1978; AMS reprint 2008},
	address=	{USA}, 
	publisher=	{AMS Chelsea Publishing}
}


\bib{Avetisyan_JST_2016}{article}{
	title=         {Spectral asymptotics for first order systems},
	volume=        {6},
	number=        {4},
	journal=       {J. Spectr. Theory},
	publisher=     {European Mathematical Society Publishing House},
	author=        {Avetisyan, Z.},
	author=		   {Fang, Y.-L.},
	author=        {Vassiliev, D.},
	year=          {2016},
	pages=         {695 -- 715}, 
	archivePrefix= {arXiv},
	eprint=		   {arXiv:1512.06281[math.SP]},
	primaryClass = {math.SP}
}

\bib{Baer_EMS_2007}{book}{
	author=			{B\"{a}r, C.},
	author= 		{Ginoux, N.},
	author= 		{Pf\"{a}ffle, F.},
	title=			{Wave Equations on {L}orentzian Manifolds and Quantization},
	series=			{ESI Lectures in Mathematics and Physics},
	publisher=		{European Mathematical Society},
	address=		{Germany},
	year=			{2007},
	eprint= 		{arXiv:0806.1036v1[math.DG]}
}

\bib{Baer_Springer_2012}{article}{
 	author=			{B{\"a}r, C.},
 	author= 		{Ginoux, N.},
 	title=			{Classical and Quantum Fields on {L}orentzian Manifolds},
 	book={
 		editor=		{B{\"a}r, C.},
 		editor=		{Lohkamp, J.},
 		editor=		{Schwarz, M.},
 		title=		{Global Differential Geometry},
 		year=		{2012},
 		publisher=	{Springer},
 		address=	{Berlin, Heidelberg},
 	},
 	pages=			{359 -- 400},
 	isbn=			{978-3-642-22842-1},
 	archivePrefix= 	{arXiv},
 	eprint       = 	{arXiv:1104.1158[math-ph]},
 	primaryClass = 	{math-ph}
}

\bib{Bartolo_NonlinearAnal_2001}{article}{
	title = {Periodic trajectories on stationary Lorentzian manifolds},
	journal = {Nonlinear Anal. Theory Methods Appl.},
	volume = {43},
	number = {7},
	pages = {883-903},
	year = {2001},
	issn = {0362-546X},
	author = {Bartolo, R.}
}

\bib{Baum_AGAG_1996}{article}{
	author=		{Baum, H.},
	author=		{Kath, I.},
	title=		{Normally hyperbolic operators, the {H}uygens property and conformal geometry},
	journal=	{Ann. Glob. Anal. Geom.},
	volume=		{14},
	date=		{1996},
	number=		{4},
	pages=		{315 -- 371}
}

\bib{Bolte_FoundPhys_2001}{article}{
	author=	{Bolte, J.},
	title=		{Semiclassical Expectation Values for Relativistic Particles with Spin 1/2},
	journal=	{Found. Phys.},
	year=		{2001},
	month=		{Feb},
	day=		{01},
	volume=	{31},
	number=	{2},
	pages=		{423 -- 444},
	archivePrefix=	{arXiv},
	eprint= 	{arXiv:0009052[nlin.CD]},
	primaryClass= 	{nlin.CD}
}

\bib{Bolte_JPA_2004}{article}{
	year=		{2004},
	month=		{jun},
	publisher=	{{IOP} Publishing},
	volume=	{37},
	number=	{24},
	pages=		{6359 -- 6373},
	author=	{Bolte, J.},
	author={Glaser, R.},
	title=		{Zitterbewegung and semiclassical observables for the {D}irac equation},
	journal=	{J. Phys. A: Math. Gen.},
	archivePrefix=	{arXiv},
	eprint=	{arXiv:0402154[quant-ph]},
	primaryClass=	{quant-ph}
}

\bib{Bolte_PRL_1998}{article}{
	title=		{Semiclassical Time Evolution and Trace Formula for Relativistic Spin-1/2 Particles},
	author=		{Bolte, J.},
	author=		{Keppeler, S.},
	journal=	{Phys. Rev. Lett.},
	volume=		{81},
	issue=		{10},
	pages=		{1987 -- 1991},
	numpages=	{0},
	year=		{1998},
	month=		{Sep},
	publisher=	{American Physical Society},
	archivePrefix= {arXiv},
	eprint       = {arXiv:9805041[quant-ph]},
	primaryClass = {quant-ph}
}

\bib{Bolte_AnnPhys_1999}{article}{
	title=		{A Semiclassical Approach to the {D}irac Equation},
	journal=	{Ann. Phys.},
	volume=	{274},
	number=	{1},
	pages=		{125 -- 162},
	year=		{1999},
	issn=		{0003-4916},
	author=	{Bolte, J.},
	author={Keppeler, S.},
	archivePrefix= {arXiv},
	eprint       = {arXiv:9811025[quant-ph]},
	primaryClass = {quant-ph}
}

\bib{Branson_JFA_1992}{article}{
	title=	{Residues of the eta function for an operator of Dirac type},
	journal={J. Funct. Anal.},
	volume=	{108},
	number=	{1},
	pages=	{47 -- 87},
	year=	{1992},
	issn=	{0022-1236},
	author=	{Branson, T. P.},
	author=	{Gilkey, P. B.}
}

\bib{Candela_AdvMath_2008}{article}{
	title=	{Global hyperbolicity and {P}alais–{S}male condition for action functionals in stationary spacetimes},
	journal=	{Adv. Math.},
	volume=	{218},
	number=	{2},
	pages=	{515--536},
	year=		{2008},
	issn=		{0001-8708},
	author=	{Candela, A.M.}, 
	author={Flores, J.L.},
	author={S\'{a}nchez, M.},
	eprint       ={arXiv:0610175[math.DG]}
}

\bib{Capoferri}{article}{ 
	author={Capoferri, M.},
	author={Murro, S.},
	title={Global and microlocal aspects of {D}irac operators: propagators and {H}adamard states},
	eprint={arXiv:2201.12104[math.AP]},
	year={2022}
}

\bib{Capoferri_2020}{article}{ 
	title={Global propagator for the massless {D}irac operator and spectral asymptotics},
	author={Capoferri, M.},
	author={Vassiliev, D.},
	journal={arXiv:2004.06351[math.AP]},
	year={2020}
}

\bib{Chazarain_InventMath_1974}{article}{
	author={Chazarain, J.},
	title={Formule de {P}oisson pour les vari{\'e}t{\'e}s riemanniennes},
	journal={Invent. math},
	year={1974},
	month={Mar},
	day={01},
	volume={24},
	number={1},
	pages={65--82},
	issn={1432-1297}
}

\bib{Chazarain_CPDE_1980}{article}{
	author=		{Chazarain, J.},
	title=		{Spectre d'Un hamiltonien quantique et mecanique classique},
	journal=	{Commun. Partial Differ. Equ.},
	volume=		{5},
	number=		{6},
	pages=		{595 -- 644},
	year =		{1980},
	publisher=	{Taylor \& Francis}
}

\bib{Chervova_JST_2013}{article}{
	title=      {The spectral function of a first order elliptic system},
	volume=     {3},
	number=     {3},
	journal=    {J. Spectr. Theory},
	publisher=  {European Mathematical Society Publishing House},
	author=     {Chervova, O.},
	author=		{Downes, R.},
	author=		{Vassiliev, D.},
	year=       {2013},
	pages=      {317 -- 360}, 
	archivePrefix={arXiv},
	eprint=		{arXiv:1208.6015[math.SP]},
	primaryClass={math.SP}
}

\bib{Duistermaat_Birkhaeuser_2011}{book}{
	title=		{Fourier Integral Operators},
	author=		{Duistermaat, J. J.},
	series=		{Modern Birkh\"{a}user Classics},
	url=		{https://www.springer.com/gb/book/9780817681074},
	year=		{2011},
	address=	{New York}, 
	publisher=	{Birkh\"{a}user}
}

\bib{Duistermaat_Inventmath_1975}{article}{
	author=		{Duistermaat, J. J.},
	author=		{Guillemin, V. W.},
	title=		{The spectrum of positive elliptic operators and periodic bicharacteristics},
	journal=	{Invent. math.},
	year=		{1975},
	month=		{Feb},
	day=		{01},
	volume=		{29},
	number=		{1},
	pages=		{39 -- 79},
	issn=		{1432-1297}
}

\bib{Duistermaat_ActaMath_1972}{article}{
	author=		{Duistermaat, J. J.},
	author=		{H\"{o}rmander, L.},
	fjournal=	{Acta Mathematica},
	journal=	{Acta Math.},
	pages=		{183 -- 269},
	publisher=	{Institut Mittag-Leffler},
	title=		{Fourier integral operators. {I}{I}},
	volume=		{128},
	year=		{1972}
}

\bib{Guillemin_AdvMath_1985}{article}{
	title=		{A new proof of {W}eyl's formula on the asymptotic distribution of eigenvalues},
	journal=	{Adv. Math.},
	volume=		{55},
	number=		{2},
	pages=		{131 -- 160},
	year=		{1985},
	issn=		{0001-8708},
	author=		{Guillemin, V.}
}

\bib{Guillemin_InternationalP_2013}{book}{
	title=		{Semi-Classical Analysis},
	author=		{Guillemin, V.},
	author=		{Sternberg, S.},
	isbn=		{9781571462763},
	publisher=	{International Press of Boston, Inc.},
	address=	{US}, 
	year= 		{2013}
}

\bib{Gutzwiller_JMP_1971}{article}{
	author=		{Gutzwiller, M. C.},
	title=		{Periodic Orbits and Classical Quantization Conditions},
	journal=	{J. Math. Phys.},
	volume=		{12}, 
	number=		{3},
	pages=		{343 -- 358},
	year=		{1971},
}

\bib{Hoermander_ActaMath_1968}{article}{
	author=	{H\"{o}rmander, L.},
	title=	{The spectral function of an elliptic operator},
	journal={Acta Math.},
	year=	{1968},
	volume=	{121},
	number=	{1},
	pages=	{193 -- 218},
	issn=	{1871-2509}
}

\bib{Hoermander_ActaMath_1971}{article}{
	author=		{H\"{o}rmander, L.},
	fjournal=	{Acta Mathematica},
	journal=	{Acta Math.},
	pages=		{79 -- 183},
	publisher=	{Institut Mittag-Leffler},
	title=		{Fourier integral operators. {I}},
	volume=		{127},
	year=		{1971}
}

\bib{Hoermander_Springer_2007}{book}{
	author=			{H\"{o}rmander, L.},
	title=			{The Analysis of Linear Partial Differential Operators {I}{I}{I}: Pseudo-Differential Operators},
	series=			{Classics in Mathematics},
	volume=			{},
	edition=		{},
	publisher=		{Springer-Verlag Berlin Heidelberg},
	address=		{Germany},
	year=			{2007}
}

\bib{Hoermander_Springer_2009}{book}{
	author=			{H\"{o}rmander, L.},
	title=			{The Analysis of Linear Partial Differential Operators {I}{V}: {F}ourier Integral Operators},
	series=			{Classics in Mathematics},
	volume=			{},
	edition=		{},
	publisher=		{Springer-Verlag Berlin Heidelberg},
	address=		{Germany},
	year=			{2009}
}

\bib{Islam}{article}{
	title={On microlocalization and the construction of {F}eynman propagators for normally hyperbolic operators}, 
	author={Islam, O.},
	author={Strohmaier, A.},
	year={2020},
	eprint={arXiv:2012.09767[math.AP]},
	archivePrefix={arXiv},
	primaryClass={math.AP}
}

\bib{Ivrii_BullMathSci_2016}{article}{
	title=         {100 years of {W}eyl’s law},
	author=        {Ivrii, V.},
	journal=       {Bull. Math. Sci.},
	volume=        {6},
	pages=         {379 -- 452},
	year=          {2016},
	eprint=        {arXiv:1608.03963[math.SP]},
	archivePrefix= {arXiv},
	primaryClass=  {math.SP}
}

\bib{Jakobson_CMP_2007}{article}{
	author=			{Jakobson, D.},
	author= 		{Strohmaier, A.},
	title=			{High Energy Limits of {L}aplace-Type and {D}irac-Type EigenFunctions and Frame Flows},
	journal=		{Commun. Math. Phys.},
	year=			{2007},
	month=			{Mar},
	day=			{01},
	volume=			{270},
	number=			{3},
	pages=			{813 \ndash 833},
	issn=			{1432-0916},
	url=			{https://doi.org/10.1007/s00220-006-0176-0},
	eprint=			{arXiv:0607616v1 [math.SP]}
}

\bib{Khesin_AdvMath_2009}{article}{
	title=		{Pseudo-{R}iemannian geodesics and billiards},
	journal=	{Adv. Math.},
	volume=		{221},
	number=		{4},
	pages=		{1364 -- 1396},
	year=		{2009},
	issn=		{0001-8708},
	url=		{http://www.sciencedirect.com/science/article/pii/S0001870809000553},
	author=		{Khesin, B.},
	author=		{Tabachnikov, S.},
	eprint=		{arXiv:math/0608620[math.DG]}
}

\bib{Laptev_CPAM_1994}{article}{
	author= 	{Laptev, A.},
	author=		{Safarov, Y.},
	author=		{Vassiliev, D.},
	title= 		{On global representation of lagrangian distributions and solutions of hyperbolic equations},
	journal= 	{Commun. Pure Appl. Math.},
	volume= 	{47},
	number= 	{11},
	pages= 		{1411 \ndash 1456},
	url= 		{https://onlinelibrary.wiley.com/doi/abs/10.1002/cpa.3160471102},
	year= 		{1994}
}

\bib{Li_JGP_2016}{article}{
	title=		{The local counting function of operators of {D}irac and {L}aplace type},
	journal=	{J. Geom. Phys.},
	volume=	{104},
	pages=		{204 -- 228},
	year=		{2016},
	issn=		{0393-0440},
	author=	{Li, L.},
	author={Strohmaier, A.},
	eprint       = {arXiv:1509.00198[math.SP]},
	archivePrefix= {arXiv},
	primaryClass = {math.SP}
}

\bib{McCormick}{article}{
	author = {McCormick, A.},
	title = {A Trace Formula on Stationary {K}aluza-{K}lein Spacetimes},
	eprint={arXiv:2203.16729[math-ph]},
	year = {2022}
}

\bib{Meinrenken_ReptMathPhys_1992}{article}{
	title=		{Semiclassical principal symbols and {G}utzwiller's trace formula},
	journal=	{Rep. Math. Phys.},
	volume=		{31},
	number=		{3},
	pages=		{279 -- 295},
	year=		{1992},
	issn=		{0034-4877},
	author=		{Meinrenken, E.}
}

\bib{Meinrenken_JGP_1994}{article}{
	title=	{Trace formulas and the {C}onley-{Z}ehnder index},
	journal={J. Geom. Phys.},
	volume=	{13},
	number=	{1},
	pages=	{1 -- 15},
	year=	{1994},
	issn=	{0393-0440},
	author=	{Meinrenken, E.}
}

\bib{Muehlhoff_JMP_2011}{article}{
	author=        {M\"{u}hlhoff, R.},
	title=         {Cauchy Problem and {G}reen's Functions for First Order Differential Operators and Algebraic Quantization},
	eprint=        {arXiv:1001.4091[math-ph]},
	archivePrefix= {arXiv},
	primaryClass=  {math-ph},
	journal=       {J. Math. Phys.},
	volume=        {52},
	pages=         {022303},
	year={2011}
}

\bib{Muratore-Ginanneschi_PR_2003}{article}{
	title=	{Path integration over closed loops and {G}utzwiller's trace formula},
	journal=	{Phys. Rep.},
	volume=	{383},
	number=	{5},
	pages=	{299 -- 397},
	year=		{2003},
	issn=		{0370-1573},
	author=	{Muratore-Ginanneschi, P.},
	archivePrefix= {arXiv},
	eprint= 	{arXiv:0210047[nlin-cd]},
	primaryClass= {nlin-cd}
}

\bib{Penrose_1972}{article}{
	author=	{Penrose, R.}, 
	title=	{On the nature of quantum geometry},
	book={
		title={Magic Without Magic},
		editor=	{Klauder, J. R.},
		address={San Francisco},
		publisher=	{W. H. Freeman},
		date={1972},
	}
}

\bib{Safarov_JFA_2001}{article}{
	title=         {Fourier Tauberian Theorems and Applications},
	journal=       {J. Funct. Anal.},
	volume=        {185},
	number=        {1},
	pages=         {111 -- 128},
	year=          {2001},
	issn=          {0022-1236},
	author=        {Safarov, Y.}, 
	archivePrefix= {arXiv},
	eprint= 	   {arXiv:0003014[math.SP]},
	primaryClass=  {math.SP}
}

\bib{Safarov_AMS_1997}{book}{
	title=    {The Asymptotic Distribution of Eigenvalues of Partial Differential Operators},
	author=   {Safarov, Y.},
	author=	  {Vassiliev, D.},
	volume=   {155},
	series=   {Translations of Mathematical Monographs},
	year=     {1997},
	address=  {USA},
	publisher={American Mathematical Society}
}

\bib{Sanchez_AMS_1999}{article}{
	title={Timelike periodic trajectories in spatially compact Lorentz manifolds},
	author={S\'{a}nchez, M.},
	journal={Proc. Am. Math. Soc.},
	volume={127},
	number={10},
	pages={3057--3066},
	year={1999}
}

\bib{Sanchez_AMS_2011}{article}{
	author=			{S\'{a}nchez, M.},
	title=			{Recent Progress on the Notion of Global Hyperbolicity},
	book={
		editor=		{Plaue, M.},
		editor=		{Rendall, A.},
		editor=		{Scherfner, M.},
		title=		{Advances in {L}orentzian Geometry: Proceedings of the {L}orentzian Geometry Conference in {B}erlin},
		year=		{2011},
		series=		{AMS/IP Studies in Advanced Mathematics},
		volume=		{49},
		publisher=	{American Mathematical Society and International Press},
		address=	{USA},
	}
	pages=			{105 -- 124},
	doi=		    {},
	archivePrefix= 	{arXiv},
	eprint       = 	{arXiv:0712.1933[gr-qc]},
	primaryClass = 	{gr-qc}
}

\bib{Sandoval_CPDE_1999}{article}{
	author=		{Sandoval, M. R.},
	title=		{Wavetrace asymptotics for operators of {D}irac type},
	journal=	{Commun. Partial Differ. Equ.},
	volume=		{24},
	number=		{9-10},
	pages=		{1903-1944},
	year=		{1999},
	publisher=	{Taylor \& Francis}
}

\bib{Strohmaier_AdvMath_2021}{article}{ 
	author=			{Strohmaier, A.},
	author=			{Zelditch, S.},
	title=			{A {G}utzwiller trace formula for stationary space-times},
	journal=		{Adv. Math.},
	volume=			{376},
	number=			{},
	pages=			{107434},
	year=			{2021},
	archivePrefix=	{arXiv},
	eprint=			{arXiv:1808.08425[math.AP]},
	primaryClass=	{math.AP}
}

\bib{Strohmaier_IndagMath_2021}{article}{
	title=			{Semi-classical mass asymptotics on stationary spacetimes},
	journal=		{Indag. Math.},
	volume=			{32},
	number=			{1},
	pages=			{323--363},
	year=			{2021},
	issn=			{0019-3577},
	author=			{Strohmaier, A.},
	author=			{Zelditch, S.},
	archivePrefix=	{arXiv},
	eprint=			{arXiv:2002.01055[math-ph]},
	primaryClass=	{math-ph}
}

\bib{Strohmaier_RMP_2021}{article}{
	author=		{Strohmaier, A.},
	author=		{Zelditch, S.},
	title=		{Spectral asymptotics on stationary space-times},
	journal=	{Rev. Math. Phys.},
	volume=		{33},
	number=		{01},
	pages=		{2060007},
	year=		{2021}
}

\bib{Toth_GFA_2013}{article}{
	title=			{Quantum Ergodic Restriction Theorems: Manifolds Without Boundary},
	journal=		{Geom. Funct. Anal.},
	volume=			{23},
	pages=			{715 -- 775},
	year=			{2013},
	issn=			{0926-2245},
	author=			{Toth, J. A.},
	author=			{Zelditch, S.},
	archivePrefix=	{arXiv},
	eprint       =	{arXiv:1104.4531[math.SP]},
	primaryClass =	{math.SP}
}

\bib{Treves_Plenum_1980}{book}{
	title=		{Introduction to Pseudodifferential and Fourier Integral Operators: Fourier Integral Operators},
	author=		{Treves, J.-F.},
	volume=		{2},
	series=		{University Series in Mathematics},
	year=		{1980; Second Printing 1982},
	address=	{New York}, 
	publisher=	{Plenum Press}
}

\bib{Uribe_Cuernavaca_1998}{article}{
	author=	{Uribe, A.}, 
	title=	{Trace Formulae},
	conference={
		title={First Summer School in Analysis and Mathematical Physics: Quantization, the 		{S}egal-{B}argmann Transform and Semiclassical Analysis},
		address={Cuernavaca Morelos, Mexico},
		data={1998},
		},
	book={
		editor=	{P\'{e}rez-Esteva, S.},
		editor={Villegas-Blas, C.}, 
		series={Contemporary Mathematics},
		volume={260},
		publisher=	{American Mathematical Society},
		date={2000},
		},
	Pages=	{61 -- 90}
}

\bib{Verdiere_AIF_2007}{article}{
	author= 	{Colin de Verdi\`{e}re, Y.},
	title= 		{Spectrum of the {L}aplace operator and periodic geodesics: thirty years after},
	journal= 	{Ann. Inst. Fourier},
	publisher= 	{Association des Annales de l'institut Fourier},
	volume= 	{57},
	number= 	{7},
	year= 		{2007},
	pages= 		{2429--2463}
}

\bib{Wunsch_Zuerich_2008}{book}{
	title=			{Microlocal analysis and evolution equations: Lecture Notes from the 2008 {C}{M}{I}/{E}{T}{H} Summer School},
	author=		{Wunsch, J.},
	year=			{2013}, 
	series=		{Clay Mathematics Proceedings},
	volume=		{17},
	publisher=		{American Mathematical Society},
	pages=			{1--72},
	editor=		{Ellwood, D.},
	editor= {Rodnianski, I.},
	editor={Staffilani, G.},
	editor={Wunsch, J.},
	booktitle=		{Evolution Equations}, 
	url=			{https://bookstore.ams.org/cmip-17},
	eprint=		{arXiv:0812.3181[math.AP]}
}

\end{biblist}
\end{bibdiv}

\end{document}